\newtheorem*{theorem*}{Theorem}
\newtheorem{theorem}{Theorem}[section]
\newtheorem{lemma}[theorem]{Lemma}
\newtheorem{corollary}[theorem]{Corollary}
\newtheorem*{conjecture*}{Conjecture}
\title{The Affine Shape of a Figure-Eight under the Curve Shortening Flow}
\author{Matei P. Coiculescu \thanks{Supported by an N.S.F. Graduate Research Fellowship}
\ and Richard Evan Schwartz \thanks{Supported by an N.S.F. Research Grant (DMS-1807320)}}
\begin{document}
\maketitle

\begin{abstract}
We consider the curve shortening
flow applied to a class of
figure-eight curves:
those with dihedral symmetry,
convex lobes,
and a monotonicity assumption on the
curvature.
We prove that when (non-conformal)
linear transformations are applied
to the solution so as to keep the
bounding box the unit square, the
renormalized limit converges to a quadrilateral
$\Join$ which we call a bowtie.  Along the way
we prove that  suitably chosen arcs
of our evolving curves, when suitably rescaled,
converge to the Grim Reaper Soliton
under the flow. Our Grim Reaper Theorem is
an analogue of a theorem of S. Angenent in \cite{ANG},
which is proven in the locally convex case.
\end{abstract}

\section{Introduction}

We say that a smooth family
$C: S^1\cross [0,T) \to {\bf R\/}^2$ of closed immersed plane curves
is evolving according to {\it curve shortening flow\/} (CSF) if and only 
if for  any point $(u,t) \in S^1\cross [0,T)$ we have
$$\frac{\partial C}{\partial t} = k N$$
where $k$ is the curvature and $N$ is the unit normal vector of the immersed 
curve $u \to C(u,t)$.  We often abbreviate this curve as $C(t)$.
In all cases, there is some time $T>0$, called the
{\it vanishing time\/}, such that
$C(t)$ is defined for all $t \in (0,T)$ but
not at time $T$.

Some powerful results are known about this PDE.
In \cite{GH}, M. Gage and R. Hamilton prove that
when $C(0)$ is convex the curve
$C(t)$ (which remains convex) shrinks to a point
as $t \to T$ and, moreover, there is a similarity
$S_t$ such that
$S_t(C(t))$ converges to the unit circle.
 See also \cite{G1} and \cite{G2}.
In \cite{G}, M. Grayson proves that if
$C(0)$ is embedded then there is some time
$t \in (0,T)$ such that $C(t)$ is convex.
Thus, the combination of these two results says
informally that the curve-shortening flow shrinks embedded
curves to round points.

In \cite{ANG2} and \cite{ANG3}, S. Angenent proves that
if $C(0)$ is immersed and with finitely many
self-intersections, then
the number of self-intersections 
is monotone non-increasing with time.
In the case of a Figure-$8$, a smooth loop
with one self-intersection, M. Grayson
proves two things \cite{G3}:
\begin{itemize}
\item If one of the two lobes of the figure-eight has
smaller area than the other, then this lobe shrinks
to a point before the vanishing time.  Then the flow
can be continued through the singularity and it turns into
the embedded case.
\item If the lobes have equal area, the double point
    does not disappear before the vanishing time $T$, and the
  isoperimetric ratio of $C(t)$ tends to $\infty$ as
  $t \to T$.
\end{itemize}

Grayson conjectures \cite{G3} that in the second case,
the figure-$8$ converges to a point
under the curve-shortening flow, but this
is as yet unresolved. In case
$C(0)$ has $2$-fold rotational symmetry,
it follows from Corollary 2 of  \cite{DHW} that
$C(t)$ does shrink to a point (the double point) as $t \to T$. 
In a related direction, the papers
\cite{AL}, \cite{EW}, and \cite{HH} discuss
self-similar solutions to the CSF.
These shrink to a point and retain their shape.

We work with figure-$8$ curves that have
convex lobes and $4$-fold dihedral symmetry.
We normalize so that the coordinate
axes are the symmetry axes and that the $x$-axis
intersects the curve in $3$ points.  Thus,
our curves look like $\infty$ symbols.
Angenent proves in \cite{ANG2} and \cite{ANG3} that
if $C(0)$ has convex lobes then so does
$C(t)$ for all $t \in (0,T)$.

Let $C_+(t)$ denote the righthand lobe of $C(t)$.
We define $\kappa(\theta,t)>0$ to be the curvature of
$C_+(t)$ at the point where the tangent line makes an angle
$\theta$ with the $x$-axis.  We measure this angle
in such a way that the top half of $C_+(t)$ is
parametrized by $\theta \in (-\alpha(t),\pi/2]$, where $\alpha(t)$ is
the tangent angle at the origin.
Let $\kappa_{\theta}=\partial \kappa/\partial \theta$, etc.
Computing the time evolution of $\kappa$, we have
\begin{equation}
    \label{MAIN}
    \kappa_t=\kappa^2(\kappa+\kappa_{\theta \theta}).
\end{equation}
See \cite{GH} for a proof. We note that the
curve satisfying $\kappa(\theta)=\sin(\theta)$,
for $\theta \in (0,\pi)$ is a
stationary solution for Equation \ref{MAIN}.  Up to isometries of the plane,
this curve is called the {\it Grim Reaper Soliton\/}. It evolves by
translation under the curve shortening flow.
\\\\
\noindent
{\bf Definition:\/}
$C(0)$ is a {\it monotone\/} figure-eight curve if and only if
\begin{itemize}
  \item $C(0)$ is real analytic.
  \item $C(0)$ has $4$-fold dihedral symmetry.
  \item $C(0)$ has convex lobes.
  \item $\kappa_{\theta}(\theta,0)>0$ for $\theta \in
    (-\alpha(0),\pi/2)$
  \item $\kappa_{\theta \theta}(\pi/2,0) \not =0$.
  \item  The signed curvature of $C(0)$, as a function of arc length, does
    not vanish to second order at the double point.
  \end{itemize}

  The Lemniscate of Bernoulli is an example of a monotone figure 8
  curve.  The first condition is not much of a restriction because the
  curve shortening flow instantly turns curves real analytic.
  The last two conditions are nondegeneracy conditions
  included to simplify our arguments.
In \S 2 we prove that the curve-shortening flow preserves monotonicity:
if $C(0)$ is monotone, then $C(t)$ is monotone for all $t<T$.
    The proof is basically an application of the maximum principle
    and the so-called Sturmian principle for various strictly
    parabolic
    equations.
  
Define
\begin{equation}
  F(\theta,t)=\frac{\kappa(\theta,t)}{\kappa(\pi/2,t)}.
  \end{equation}
  Here $F$ is a rescaled version of $\kappa$.
In \S 3 we prove the following result.
  
\begin{theorem}[Grim Reaper]
  Assume that $C(0)$ is monotone.
  Let $J \subset (0,\pi)$ be an arbitrary closed
  interval. Let $\epsilon>0$ be given.
  For $t$ sufficiently close to $T$, we have
  $$\sup_{\theta \in J} |F(\theta,t)-\sin(\theta)|<\epsilon, \hskip 30pt
  \sup_{\theta \in J} |F_{\theta}(\theta,t)-\cos(\theta)|<\epsilon.$$
\end{theorem}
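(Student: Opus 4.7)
The Grim Reaper $\sin\theta$ is singled out as the unique admissible steady state of a rescaled form of equation~\eqref{MAIN}. Writing $K(t) := \kappa(\pi/2, t)$, using $K'(t) = K^3\bigl(1 + F_{\theta\theta}(\pi/2, t)\bigr)$, and differentiating $F = \kappa/K$, a short calculation from~\eqref{MAIN} yields
\begin{equation*}
F_t \;=\; K^2\bigl[\,F^2(F + F_{\theta\theta}) \;-\; F\bigl(1 + F_{\theta\theta}(\pi/2, t)\bigr)\bigr].
\end{equation*}
The time change $d\tau = K^2\,dt$ makes the right-hand side autonomous in $\tau$ (non-locally, through the boundary value $F_{\theta\theta}(\pi/2,\tau)$), and $\sin\theta$ is a stationary solution. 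The plan is to show that the rescaled time runs to infinity and that every subsequential $\tau \to \infty$ limit of $F(\cdot,\tau)$ is precisely $\sin\theta$.

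First I would verify that the rescaling is legitimate: that $K(t) \to \infty$ and $\alpha(t) \to 0$ as $t \to T$, so that $\tau$ runs to $+\infty$ and the natural parameter range of $F$ exhausts $(0,\pi)$. Both should follow from the fact that $C(t)$ shrinks to its double point (Corollary~2 of \cite{DHW}) combined with standard CSF theory (which forces $\sup_\theta \kappa \to \infty$ at the singular time) and the monotonicity $\kappa_\theta > 0$ of Section~2 (which pins the sup at $\theta = \pi/2$). Second, I would establish uniform $C^k_{\mathrm{loc}}((0,\pi))$ bounds on $F(\cdot,\tau)$: the inequality $0 < F \leq 1$ and the pinning $F(\pi/2,\tau) = 1$, $F_\theta(\pi/2,\tau) = 0$ are immediate from monotonicity, and parabolic Schauder estimates applied to the rescaled equation upgrade these to control on all higher derivatives on compact subsets of $(0,\pi)$. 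Arzel\`a--Ascoli then supplies a smooth subsequential limit $F_\infty\colon(0,\pi)\to[0,1]$ along any sequence $\tau_n\to\infty$.

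Third, a Lyapunov-type argument (or a direct energy bound of the form $\int_0^\infty\|F_\tau\|_{L^2(J)}^2\,d\tau<\infty$ for $J \subset (0,\pi)$ compact) should force $F_\tau\to 0$ along the subsequence, so that $F_\infty$ satisfies the stationary ODE $F_\infty(F_\infty+F_\infty'')=C$ with $F_\infty(\pi/2)=1$, $F_\infty'(\pi/2)=0$. Multiplying by $F_\infty'$ and integrating gives the phase-plane identity
\begin{equation*}
(F_\infty')^2 \;=\; 1 - F_\infty^2 + 2C\log F_\infty.
\end{equation*}
The requirement that $F_\infty$ extend to $0$ at both endpoints of $(0,\pi)$ --- forced by $\alpha \to 0$ --- selects $C=0$ uniquely: for $C>0$ the solution is periodic and bounded away from $0$, while for $C<0$ it hits $0$ strictly inside $(0,\pi)$. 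Hence $F_\infty = \sin\theta$, and since every subsequential limit is the same, the full family $F(\cdot,\tau)$ converges. Interior parabolic regularity upgrades this to convergence in all derivatives on any closed $J \subset (0,\pi)$, yielding both the stated estimates on $F$ and on $F_\theta$.

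I expect the main obstacle to be the uniform derivative bound on $F$ up to (but not including) the endpoints of $(0,\pi)$, where $F$ itself vanishes. One must rule out any concentration of $F_\theta$ or $F_{\theta\theta}$ near the double point of the figure-eight under the rescaled flow. The natural tools are a differential Harnack inequality for $\kappa$ together with barrier constructions exploiting both the preservation of real-analyticity under CSF and the monotonicity of Section~2; the non-degeneracy hypotheses built into the definition of a ``monotone'' figure-eight are likely essential to make such barriers quantitative. Once these estimates are in hand, the identification of the limit as $\sin\theta$ reduces to the ODE analysis sketched above.
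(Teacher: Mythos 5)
Your proposal takes a genuinely different route from the paper, and in the form sketched it has gaps that the paper's argument is specifically designed to avoid.

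\medskip

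\textbf{Where your approach differs.} You propose a blow-up/compactness argument: rescale by $K=\kappa(\pi/2,t)$, obtain uniform $C^k_{\mathrm{loc}}$ bounds on compact subsets of $(0,\pi)$, pass to subsequential limits $F_\infty$, use a Lyapunov functional to kill $F_\tau$, and finally classify the stationary solutions by a phase-plane integral. The paper does none of this. It instead proves the Sine Lemma---$\kappa_\theta/\kappa \to \cot\theta$ uniformly on compacts---by comparison with the explicit two-parameter family of stationary solutions $S(\theta)=\sqrt{B}\sin(\phi+\theta)$ of Equation~\eqref{MAIN} and an application of the Sturmian Principle to $w=\kappa - S$ on a carefully chosen curvilinear spacetime domain $\mathcal Q$. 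The key step is a zero-counting contradiction: $w$ has at most one zero (counted with multiplicity) on the bottom of $\mathcal Q$, but the tangency built into the choice of $\phi$ forces at least three on the top. That argument requires no uniform higher-derivative bounds, no compactness extraction, no Lyapunov functional, and no ODE classification; it needs only the monotonicity of $\kappa_\theta$ (to control the sign of $w$ on the sidewalls), the curvature blow-up (Lemma~\ref{blow00}) and the angle decay (Lemma~\ref{angle00}) to set up the geometry of $\mathcal Q$, and the elementary zero-counting Lemma~\ref{scaling}.

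\medskip

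\textbf{The gaps in your sketch.} Three steps you describe as ``should follow'' are in fact the hard part, and they are not addressed by what you cite.
\begin{enumerate}
\item \emph{Uniform interior lower bound on $F$.} You claim $0<F\le 1$ together with the pinning at $\pi/2$ is enough, and that Schauder upgrades this. But the rescaled evolution has $K^2F^2$ as the coefficient of $F_{\theta\theta}$, so it degenerates wherever $F$ is small. Parabolic Schauder estimates on a compact $J\subset(0,\pi)$ require $F$ bounded away from $0$ on $J$ uniformly in $t$. Monotonicity gives $F>0$ pointwise and $F(\pi/2,t)=1$, but says nothing about whether $\inf_J F(\cdot,t)$ could tend to $0$. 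Obtaining such a bound without already knowing the conclusion is precisely the difficulty.
\item \emph{The Lyapunov/energy step.} No functional is exhibited. The rescaled equation you derive is non-local (it carries the term $F_{\theta\theta}(\pi/2,t)$) and is not a recognizable gradient flow. The ``direct energy bound $\int_0^\infty\|F_\tau\|^2_{L^2(J)}\,d\tau<\infty$'' is asserted, not derived, and it is not an obvious consequence of the equation.
\item \emph{Boundary behaviour of $F_\infty$.} Your selection of $C=0$ in the phase-plane analysis hinges on $F_\infty$ vanishing at both endpoints of $(0,\pi)$, which you say is ``forced by $\alpha\to 0$.'' It is not. The convergence $F(\cdot,\tau_n)\to F_\infty$ is only on compact subsets of $(0,\pi)$, and $\alpha\to 0$ controls the endpoints of the angular domain, not the values of $F$ near them. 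The paper explicitly flags this point in the Remark following Lemma~\ref{goal1}: one cannot control $F$ (equivalently, the integrand $\sin\phi/F$) near $\theta=0$, which is exactly why the argument only yields a $\liminf$ for the $Y$-bound and why the full Equation~\eqref{AS5} has to be closed indirectly via the area and convexity argument. Your classification step requires endpoint information that simply is not available.
\end{enumerate}
By contrast, the Sturmian comparison argument in the paper never needs to know what $F$ does near $\theta=0$ or $\theta=\pi$: the curvilinear domain $\mathcal Q$ is deliberately offset so that $w=\kappa-S$ has a definite sign on the sidewalls, and the count of interior zeros does the rest. This is the structural reason the paper's proof succeeds where the compactness sketch stalls.

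\medskip

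\textbf{Summary.} The plan is in the spirit of a standard singularity-analysis argument and the explicit rescaled equation you compute is correct, but the three steps above are not filled in and at least the first and third look to be roughly as hard as the theorem itself by these methods. The paper's Sturmian zero-counting argument, comparing $\kappa$ to the stationary family $\sqrt{B}\sin(\phi+\theta)$, is a different and more economical mechanism that avoids needing uniform lower bounds on $F$, a Lyapunov functional, or endpoint control.
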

The Grim Reaper Theorem is the analogue of Theorem D in
\cite{ANG}.  
  In \cite{ANG}, S. Angenent also makes the monotonicity
  assumption when applying his Theorem D to specific curves.
The result implies that a suitably rescaled
copy of the arc of $C(t)$ corresponding to
$\theta \in (0,\pi)$ converges to the Grim Reaper curve.
The arc in question is the one between the two
dots in Figure 1.
Our proof departs from that in
\cite{ANG} because we are not working with
locally convex curves as in \cite{ANG}.

The {\it bounding box\/} of a
compact set in the plane is the smallest rectangle,
with sides parallel to the coordinate axes,
that contains the set.
The main goal of the paper is to
understand the limit of the curves
$\{L_t(C(t)\}$ where $L_t$ is the positive
diagonal matrix such that $L_t(C(t))$
has the square $[-1,1]^2$ for a bounding box.
Even though affine transformations do not interact
in a nice way with the curve shortening flow,
nothing stops us from looking at a solution
and applying affine transformations afterwards.

The {\it bowtie\/} is the quadrilateral
whose vertices are
$$(-1,-1), \hskip 15 pt (1,1), \hskip 15 pt (1,-1), \hskip 15 pt (-1,1)$$
in this cyclic order.   It is shaped like this: $\Join$.
The {\it Hausdorff distance\/} between two
compact subsets of the plane is the smallest
$\epsilon$ such that each set is contained
in the $\epsilon$-neighborhood of the other.
This distance makes the set of compact
planar subsets into a metric space.
Here is our main result.

\begin{theorem}[Bowtie]
\label{bowtie2}
Suppose that $C(0)$ is monotone.
  As $t \to T$, the curves
  $L_t(C(t))$ converge in the Hausdorff metric to the
  bowtie.
\end{theorem}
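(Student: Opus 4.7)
The plan is to use the 4-fold dihedral symmetry to reduce to showing that $L_t$ applied to the right lobe $C_+(t)$ converges in Hausdorff to the right triangle $\Delta$ with vertices $(0,0),(1,1),(1,-1)$. Since the region enclosed by $C_+(t)$ is convex and $\Delta$ has nonempty interior, it suffices to prove Hausdorff convergence of the enclosed convex regions, after which the boundary curves converge automatically. The convex-region convergence will be proved by showing uniform convergence of support functions, combining the Grim Reaper Theorem on the middle of the lobe with a direct analysis of the arm near the double point.

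First, establish the two asymptotic rates $K(t)h(t)\to\pi/2$ and $K(t)w(t)\to\infty$ as $t\to T$, where $K(t)=\kappa(\pi/2,t)$ and $h(t),w(t)$ are the half-height and half-width of the bounding box. These follow by integrating $dx/d\theta=\cos\theta/\kappa$ and $dy/d\theta=\pm\sin\theta/\kappa$ on a closed subinterval $[\epsilon,\pi/2]$, substituting the Grim Reaper asymptotics $\kappa\approx K\sin\theta$, and using the monotonicity $\kappa_\theta>0$ to control the complementary range $[0,\epsilon]$ and similarly the bottom half. In particular, the aspect ratio $h/w$ tends to $0$.

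Next, for each direction $\phi$ the support function $h_L(\phi,t)=\sup\{X\cos\phi+Y\sin\phi:(X,Y)\in L_tC_+(t)\}$ is attained at a parameter $\theta^*(\phi,t)$ solving $\tan\theta^*=(h/w)\cot\phi$. For $\phi\in(0,\pi/2)$ the vanishing aspect ratio forces $\theta^*\to 0$, yet the Grim Reaper asymptotics $x_L(\theta)\approx 1+(Kw)^{-1}\log\sin\theta$ and $y_L(\theta)\approx(Kh)^{-1}(\pi/2-\theta)$ remain applicable. Combined with the rates from the first step, they yield $(x_L,y_L)(\theta^*)\to(1,1)$ and hence $h_L(\phi,t)\to\cos\phi+\sin\phi$, the value of the support function of $\Delta$ in this direction. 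Case-by-case arguments cover the remaining sectors: the rightmost point for $\phi$ near $0$, the origin for $\phi\in[3\pi/4,5\pi/4]$ where the triangle's support is $0$, the topmost or bottommost point (with $y_L=1$ and $x_L\to 1$) for the intermediate sectors $(\pi/2,3\pi/4)$ and $(-3\pi/4,-\pi/2)$, and a symmetric analysis for negative $\phi$.

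Uniform convergence of support functions yields Hausdorff convergence of the convex regions, hence of their boundary curves; combined with the symmetric statement for the left lobe, this produces the bowtie. The main obstacle is the short arm $\theta\in(-\alpha(t),0]$, which lies outside the reach of the Grim Reaper Theorem. To treat the sector $\phi\in(3\pi/4,\pi)$ one must show that in $L_t$-coordinates the arm stays within $o(1)$ of the segment from $(0,0)$ to $(1,1)$, and in particular that $w_{\text{top}}/w\to 1$. This will require establishing $\alpha(t)\to 0$ at the sharp rate $\alpha\sim h/w$, which we plan to extract by combining the monotonicity $\kappa_\theta>0$ with Grim Reaper estimates for $\theta$ just above $0$.
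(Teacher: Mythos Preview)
Your support-function strategy is a natural idea but it has a genuine gap at its very first step, and this gap propagates through the rest of the argument.

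You claim that $K(t)h(t)\to\pi/2$ follows from integrating $dy/d\theta=\sin\theta/\kappa$, substituting the Grim Reaper asymptotics on $[\epsilon,\pi/2]$, and using monotonicity on $[0,\epsilon]$.  This does not work.  Writing $Kh=\int_0^{\pi/2}\sin\theta/F(\theta,t)\,d\theta$, the Grim Reaper Theorem gives control only on closed subintervals of $(0,\pi)$, so it yields $\liminf Kh\ge\pi/2$ and nothing more.  Monotonicity gives $F(\theta,t)<1$ for $\theta<\pi/2$, so $\sin\theta/F>\sin\theta$; this bounds the tail integral $\int_0^{\epsilon}\sin\theta/F\,d\theta$ \emph{below}, not above, and hence cannot be used to show the tail vanishes.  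The paper makes exactly this point in the Remark following the $Y$ bound: the integrand could blow up near $\theta=0$, and the limit $Kh=\pi/2$ is obtained only indirectly, by sandwiching the rescaled area between the convexity lower bound $2$ and the upper bound $2\pi\cdot(2/\pi)\cdot(1/2)=2$ coming from the three formulas in Equation~\ref{AS0}.  The third of those formulas, $\liminf X/\big((T-t)K\big)\ge 2$, requires an additional nontrivial argument (the nodal function and a Sturmian count, \S5.2--5.6) that your proposal does not mention.

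The same issue undermines your support-function computation for $\phi\in(0,\pi/2)$.  You have $\theta^*(\phi,t)\to 0$, so you are evaluating the formulas $x_L(\theta)\approx 1+(Kw)^{-1}\log\sin\theta$ and $y_L(\theta)\approx(Kh)^{-1}(\pi/2-\theta)$ at angles tending to the boundary of the interval on which the Grim Reaper Theorem says anything.  Neither formula is justified there, and the first blows up.  You also explicitly leave the arm $\theta\in(-\alpha(t),0]$ unfinished, with only a plan to extract $\alpha\sim h/w$.

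The paper's route avoids all of this.  Once the area of $L_t(C(t))$ is pinned to $2$, the Migration Lemma (topmost point $\to(1,1)$) follows from $Kh=\pi/2$ and a short convexity argument, and then any Hausdorff limit point outside the bowtie would force an inscribed polygon of area strictly greater than $2$, a contradiction.  No detailed control of the arm or of the asymptotics as $\theta\to 0$ is needed.
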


Figure 1 shows a picture of a numerical simulation of the
curve shortening flow.  The curve on the left
is $L_0(C(0))$ where $C(0)$ is the Lemniscate of
Bernoulli.
The black curve on the right is $L_t(C(t))$
for some later time $t$.
The blue curve on the right is $\Gamma(t)=C(t)/X(t)$, the
rescaled version of $C(t)$ whose bounding box has
width $2$.
The black and white dots correspond 
to where $\theta=0$ and $\theta=\pi$
respectively. Figure 1 shows
some hints of the bowtie forming.

\begin{center}
\resizebox{!}{2.5in}{\includegraphics{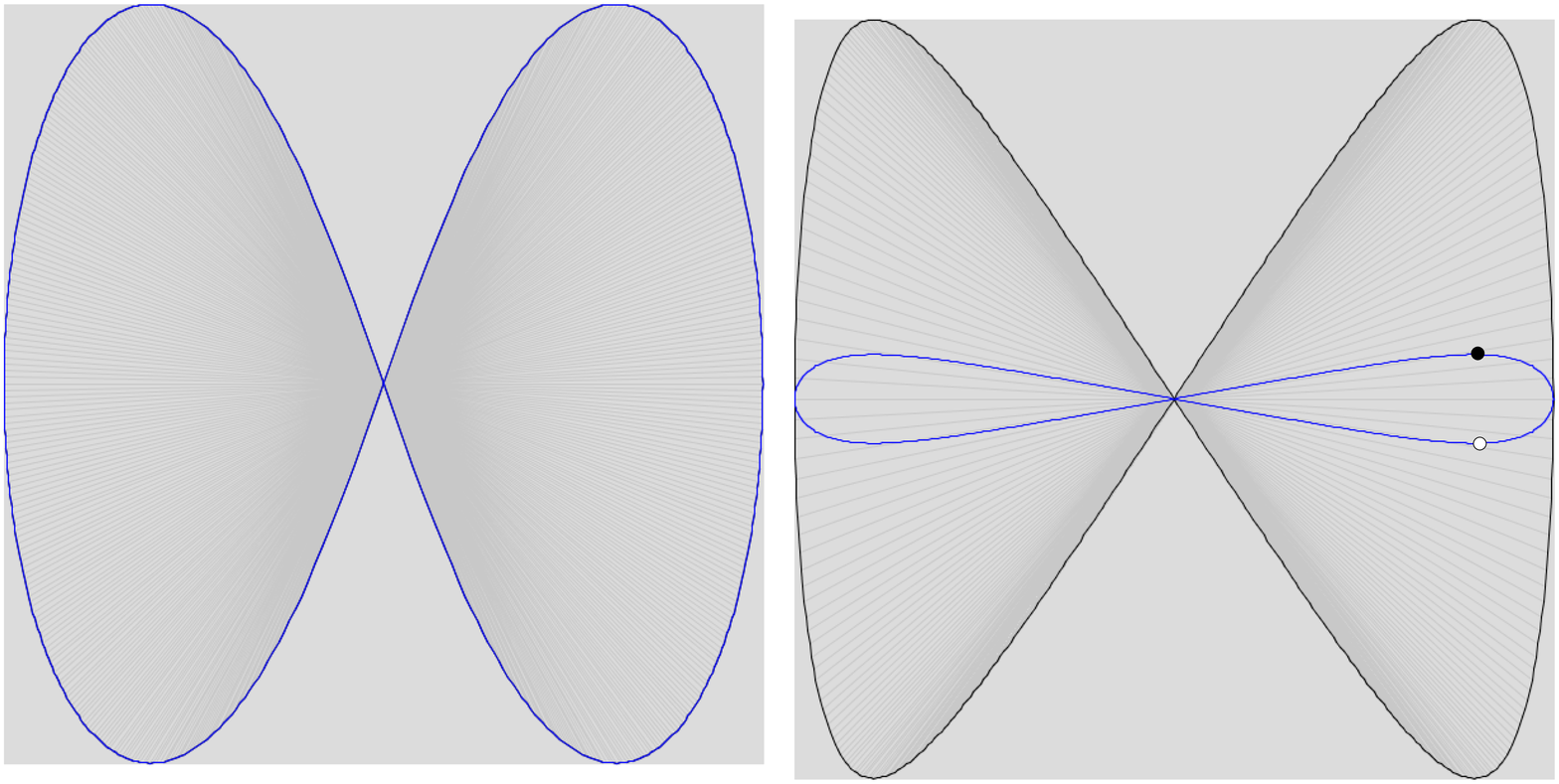}}
\newline
    {\bf Figure 1:\/} A hint of the bowtie.
  \end{center}

  Now we sketch the proof of the Bowtie Theorem.
  Our argument fleshes out the
      outline proposed in \cite{M}.
  Let $A(t)$ be the area of the region -- i.e., the two lobes --
  bounded by $C(t)$.
Let $X(t)$ and $Y(t)$ be such that
 $[-X(t),X(t)] \times [-Y(t),Y(t)]$ is the
bounding box of $C(t)$.
  The main thrust of our proof is establishing the following three
  formulas.
  
\begin{equation}
\label{AS0}
\lim_{t \to T} \frac{A(t)}{T-t}=2\pi, \hskip 14 pt
\liminf_{t \to T} \kappa(\pi/2,t) Y(t) \geq \pi/2, \hskip 14 pt
\liminf_{t \to T} \frac{X(t)}{(T-t)\kappa(\pi/2,t)} \geq 2
\end{equation}
The first of these formulas is essentially the same as the bound in
\cite{G3}, but sharpened by the fact, which we prove, that the angle
at the double point tends to $0$ as $t \to T$.
The second formula is an easy application of the Grim Reaper
Theorem.
The third formula follows from a well-chosen rescaling argument combined
with the Sturmian Principle.
These formulas combine to give the upper bounds
\begin{equation}
\label{AS3}
\limsup_{t \to T} \frac{A(t)}{X(t)Y(t)} \leq 2, \hskip 30 pt
\limsup_{t \to T} {\rm area\/}(L_t(C(t))) \leq 2
\end{equation}
The first bound immediately implies the second.
On the other hand, it follows from convexity that
$L_t(C(t))$ has area at least $2$.  We conclude that
\begin{equation}
  \label{AS5}
  \lim_{t \to T} {\rm area\/}(L_t(C(t))) = 2, \hskip 14 pt
  \lim_{t \to T} \kappa(\pi/2,t) Y(t) =\pi/2, \hskip 14 pt
\lim_{t \to T} \frac{X(t)}{(T-t)\kappa(\pi/2,t)} = 2
\end{equation}

Similar asymptotic results are obtained for everywhere locally convex curves in \cite{ANG4} and \cite{FM}.
We use the middle equation in Equation \ref{AS5}, the $Y$ bound, to
prove:

\begin{lemma}[Migration]
  The point on $L_t(C(t))$ having positive first coordinate
  and largest second coordinate converges to $(1,1)$ as $t \to T$.
\end{lemma}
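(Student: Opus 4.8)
The plan is to reduce the lemma to the single estimate $x_P(t)/X(t)\to 1$, where $P(t)=(x_P(t),Y(t))$ is the point of the right lobe $C_+(t)$ with a horizontal tangent, i.e.\ with $\theta=0$. Since $L_t$ is the diagonal map $(x,y)\mapsto(x/X(t),\,y/Y(t))$ and $P(t)$ realizes the top of the bounding box, the point of $L_t(C(t))$ with positive first coordinate and largest second coordinate is $L_t(P(t))=\bigl(x_P(t)/X(t),\,1\bigr)$; so the lemma says precisely that $x_P(t)/X(t)\to 1$.

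First I would record the integral identities coming from $ds=\kappa^{-1}\,d\theta$. Writing $R(t)=(X(t),0)$ for the point with $\theta=\pi/2$, one has $dx=\kappa^{-1}\cos\theta\,d\theta$ and $dy=-\kappa^{-1}\sin\theta\,d\theta$ along the top half of $C_+(t)$, so that
\[
X(t)=\int_{-\alpha(t)}^{\pi/2}\frac{\cos\theta}{\kappa(\theta,t)}\,d\theta,\qquad
x_P(t)=\int_{-\alpha(t)}^{0}\frac{\cos\theta}{\kappa(\theta,t)}\,d\theta,\qquad
Y(t)=\int_{0}^{\pi/2}\frac{\sin\theta}{\kappa(\theta,t)}\,d\theta,
\]
and in particular $X(t)-x_P(t)=\int_0^{\pi/2}\kappa^{-1}\cos\theta\,d\theta$; the goal is to show this is $o(X(t))$. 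A cheap but essential preliminary observation is that $\kappa(\pi/2,t)\,X(t)\to\infty$: since $\kappa(\pi/2,t)\,X(t)\ge\int_0^{\pi/2}\cos\theta/F(\theta,t)\,d\theta\ge\int_\delta^{\pi/2}\cos\theta/F(\theta,t)\,d\theta$, the Grim Reaper Theorem on the fixed interval $[\delta,\pi/2]$ forces $\liminf_{t\to T}\kappa(\pi/2,t)\,X(t)\ge\int_\delta^{\pi/2}\cot\theta\,d\theta=-\log\sin\delta$ for every $\delta\in(0,\pi/2)$, and $-\log\sin\delta\to\infty$ as $\delta\to0$.

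The core is an area comparison. Let $\mathcal K(t)$ be the convex hull of the four extreme points $O=(0,0)$, $P=(x_P,Y)$, $R=(X,0)$, $P'=(x_P,-Y)$ of $C_+(t)$; a direct computation gives $\mathrm{area}\,\mathcal K(t)=X(t)Y(t)$. Since the lobe region $\Omega_+(t)$ is convex it contains $\mathcal K(t)$, and $\Omega_+(t)\setminus\mathcal K(t)$ splits into four boundary caps; by the symmetry across the $x$-axis two of them have common area $a_1$ and the other two common area $a_2$, where $a_2$ is the area cut off by the chord $PR$ and the boundary arc of $C_+(t)$ from $P$ to $R$ (the one where $\theta\in[0,\pi/2]$). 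By \eqref{AS5}, $\mathrm{area}(L_t(C(t)))=A(t)/(X(t)Y(t))\to 2$, while $A(t)=2\,\mathrm{area}\,\Omega_+(t)$, so $\mathrm{area}\,\Omega_+(t)=X(t)Y(t)(1+o(1))$ and hence $a_2\le a_1+a_2=o(X(t)Y(t))$. In the other direction, let $Q(t)$ be the point of $C_+(t)$ with $\theta=\pi/4$; it lies on that arc, and convexity gives $a_2\ge\mathrm{area}(\triangle PRQ)=\tfrac12\bigl|(X-x_P)\,y_Q-Y\,(X-x_Q)\bigr|$. The Grim Reaper Theorem on the fixed interval $[\pi/4,\pi/2]$ gives $\kappa(\pi/2,t)\,y_Q\to\tfrac{\pi}{4}$ and $\kappa(\pi/2,t)(X-x_Q)\to\tfrac12\log 2$, and the $Y$-bound gives $\kappa(\pi/2,t)\,Y\to\tfrac{\pi}{2}$; therefore
\[
\mathrm{area}(\triangle PRQ)=\frac{1}{2\,\kappa(\pi/2,t)}\,\Bigl|\,(X-x_P)\bigl(\tfrac{\pi}{4}+o(1)\bigr)-\tfrac{\pi\log 2}{4\,\kappa(\pi/2,t)}\,(1+o(1))\,\Bigr|.
\]
Combining the two bounds on $a_2$ with $\kappa(\pi/2,t)\,X(t)Y(t)=X(t)\cdot\kappa(\pi/2,t)Y(t)=(\tfrac{\pi}{2}+o(1))\,X(t)$ gives $(X-x_P)\bigl(\tfrac{\pi}{4}+o(1)\bigr)=\tfrac{\pi\log 2}{4\kappa(\pi/2,t)}(1+o(1))+o(X(t))$, and since $1/\kappa(\pi/2,t)=o(X(t))$ by the preliminary step the right-hand side is $o(X(t))$. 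Hence $X(t)-x_P(t)=o(X(t))$, which is what was needed.

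I expect the main obstacle to be the careful justification of the cap decomposition: one must check that the four extreme points occur along $\partial\Omega_+(t)$ in the cyclic order $O,P,R,P'$, that the boundary arc from $P$ to $R$ lies on the side of the line $PR$ away from $\mathcal K(t)$ (so that the "cap over $PR$" is a genuine region of area $a_2$, meeting the other caps only along boundary arcs), and that the $o(1)$'s produced by the Grim Reaper Theorem on $[\pi/4,\pi/2]$ are honest functions of $t$ tending to $0$. Apart from this the only substantive inputs are the Grim Reaper Theorem (used on the fixed intervals $[\delta,\pi/2]$ and $[\pi/4,\pi/2]$) and the already-proved limits \eqref{AS5}; no new parabolic estimate is required.
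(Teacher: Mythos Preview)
Your argument is correct. It follows the same underlying strategy as the paper's proof---use the area limit in Equation~\eqref{AS5} together with the Grim Reaper Theorem and the $Y$-bound to force the top point toward the corner---but the execution differs in two ways worth noting.

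First, the paper argues by contradiction: it locates an intermediate point on the arc by \emph{height} (the point $\Gamma(\delta_t,t)$ at half the box height), shows via a triangle-trapping bound that this point has $x$-coordinate tending to $1$, and then observes that if the top point were at $(1-\eta,1)$ the convex hull of these five points would be a pentagon of area $1+\eta/2>1$, contradicting the area limit. You instead locate the intermediate point by \emph{angle} ($Q$ at $\theta=\pi/4$), compute its rescaled coordinates directly from the Grim Reaper Theorem on $[\pi/4,\pi/2]$, and bound the cap area below by $\mathrm{area}(\triangle PRQ)$; the area limit then gives $X-x_P=o(X)$ with no contradiction step. Your version is a little more streamlined.

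Second, your route requires the preliminary estimate $\kappa(\pi/2,t)X(t)\to\infty$, which you prove cleanly from the Grim Reaper Theorem; the paper avoids this by using the aspect-ratio bound $Y/X\to 0$ in Lemma~\ref{rise} instead. Both are short.

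The caveats you flag at the end (cyclic order of the extreme points, convexity of the cap over $PR$, uniformity of the $o(1)$ terms) are all genuine but routine: the cap over $PR$ is the intersection of the convex lobe with a half-plane and hence convex, so it contains $\triangle PRQ$; and the Grim Reaper Theorem gives uniform convergence on the fixed compact interval $[\pi/4,\pi/2]$, which is exactly what you need.
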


Now, suppose that there is a sequence
$\{t_n\}$ for which
     $L_{t_n}(C(t_n))$ does not converge in the
     Hausdorff topology to the bowtie.  We
      pass to a further subsequence so that
      $\{L_{t_n}(C(t_n))\}$ has some limit point $(x,y)$
      in the positive quadrant that lies
        outside the region bounded by the bowtie.
        Let $\Psi$ be the polygonal figure $8$, with
        $4$-fold dihedral symmetry, whose
        right lobe is the convex hull of points
        $$(0,0), \hskip 30 pt (1,\pm 1), \hskip 30 pt (x,\pm y)$$
        $\Psi$
        has area greater than $2$.  By
        symmetry and convexity, the region bounded by
        $L_{t_n}(A(t_n))$ contains a subset which
        converges to
        $\Psi$ in the Hausdorff metric.
       But then the area of
      $L_{t_n}(A(t_n))$ cannot converge to $2$.  This
      contradicts Equation \ref{AS5}.
      \newline

      In \S 2 we prove that the flow preserves monotonicity.  In \S 3
      we establish some basic asymptotic facts about $C(t)$ as $t \to
      T$, such as the decay of the angle at the double point and the
      area estimate. Some readers might want to know that \S 3
      contains
      all the results where we explicitly use the monotonicity of the
      curvature. In \S 4 we prove the Grim Reaper Theorem.  In
      \S 5 we establish the second and third formulas in Equation
      \ref{AS0}.
      In \S 6 we prove the Migration Lemma, thereby completing
      the proof of the Bowtie Theorem.
      \newline
      \newline
  \noindent
{\bf Acknowledgements:\/}
We thank Peter Doyle and
Mike Gage for some helpful conversations.
We thank Brown University and
 the National Science Foundation for their support.
 The first author thanks Princeton University for its support.
The second author thanks
the Simons Foundation, in the form
of a Simons Sabbatical
Fellowship, and the Institute for Advanced Study
for a year-long membership funded
by a grant from the Ambrose Monell Foundation. We would also like to thank the anonymous referees for their insightful comments.

\newpage

\section{Preservation of Monotonicity}
\label{GR}

\subsection{Strictly Parabolic Equations}

In this chapter we prove that
the curve shortening flow preserves the monotonicity condition.
We begin with a discussion of strictly parabolic equations
and two of their basic properties.
We follow the
notation in \cite{EV} and \cite{ANG}.

Let $U$ be an open interval containing $[x_0,x_1]$.
We suppose that $u: U \times [0,\tau]$ satisfies the equation
\begin{equation}\label{GPDE}
u_t = a(x,t) u_{xx} +b(x,t)u_x + c(x,t) u.\end{equation}
This equation is called
{\it strictly parabolic\/} if and only if 
$a(x,t)$, $b(x,t)$, and $c(x,t)$ are smooth and
$a(x,t)>0$.  We assume that $u$ satisfies a
strictly parabolic PDE.
Here is the well-known {\it Maximum Principle\/}.

\begin{theorem}[The Maximum Principle]
  Suppose that $u \not = 0$ on
  $\{x_0,x_1\} \times [0,\tau]$ and
  also nonzero on $[x_0,x_1] \times \{0\}$.
  Then $u$ is nonzero on
  $[x_0,x_1] \times [0,\tau]$.
\end{theorem}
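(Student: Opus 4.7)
The plan is to reduce to the case of a negative zeroth-order coefficient, use a first/second derivative test to obtain $u \geq 0$ throughout, and then invoke the strong minimum principle to rule out interior zeros.

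First, I would substitute $v = e^{-\lambda t} u$ with $\lambda > \sup c$; then $v$ satisfies a strictly parabolic equation of the same form with $c$ replaced by $c - \lambda < 0$, and $v$ and $u$ share the same zero set. So without loss of generality we may assume $c < 0$. The parabolic boundary $\partial_p R := (\{x_0, x_1\} \times [0, \tau]) \cup ([x_0, x_1] \times \{0\})$ is connected (the three pieces meet at the corners $(x_0, 0)$ and $(x_1, 0)$), and $u$ is continuous and nonvanishing there, so $u$ has constant sign on $\partial_p R$; replacing $u$ by $-u$ if necessary, assume $u > 0$ on $\partial_p R$.

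Next, I would show $u \geq 0$ on $R := [x_0, x_1] \times [0, \tau]$ by a standard argument. If $m := \min_R u < 0$ is attained at some $(x^*, t^*)$, the boundary hypothesis forces $(x^*, t^*) \in (x_0, x_1) \times (0, \tau]$, and the first- and second-derivative tests give $u_x = 0$, $u_{xx} \geq 0$, $u_t \leq 0$ there. Substituting into the PDE yields
\[ u_t(x^*,t^*) = a(x^*,t^*)\, u_{xx}(x^*,t^*) + c(x^*,t^*)\, m \;\geq\; c(x^*,t^*)\, m \;>\; 0, \]
since $c < 0$ and $m < 0$, contradicting $u_t \leq 0$.

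Finally, to rule out $u(x^*, t^*) = 0$ at an interior point, I would invoke the strong minimum principle for linear parabolic equations (a standard result, e.g.\ from \cite{EV}): if $u \geq 0$ satisfies the PDE on $R$ and $u(x^*, t^*) = 0$ at some interior point with $t^* > 0$, then $u \equiv 0$ on $[x_0, x_1] \times [0, t^*]$, contradicting $u > 0$ on the initial slice $t = 0$. This step is the main obstacle: at an interior zero the derivative tests only yield $u_t = u_{xx} = 0$, which is not an immediate contradiction, so one genuinely needs the strong minimum principle, whose proof proceeds by comparison with a caloric barrier rather than by a pointwise sign argument.
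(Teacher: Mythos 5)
The paper does not actually prove this theorem: it states it as a classical fact, gives the geometric picture of the rectangle $R$, and refers the reader to \cite{EV} and \cite{ANG2} for details. So there is no in-paper proof to compare against; what you have written is a correct, self-contained reconstruction of the standard argument. The structure is sound: the substitution $v=e^{-\lambda t}u$ with $\lambda>\sup c$ reduces to $c<0$ while preserving the zero set; connectedness of the parabolic boundary $\partial_p R$ gives a fixed sign, say $u>0$ there; the first/second derivative test at a putative interior negative minimum yields the contradiction $0\ge u_t\ge c\,m>0$, so $u\ge 0$ on $R$; and the strong minimum principle then forbids an interior zero. You are right to flag that last step as the genuine content — at an interior zero the pointwise tests only give $u_t=u_x=0$, $u_{xx}\ge 0$, and the PDE is then vacuously satisfied, so one really does need the strong minimum principle's barrier/propagation argument. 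Two small refinements worth noting: the strong minimum principle gives $u\equiv 0$ on the open backward strip $(x_0,x_1)\times(0,t^*]$, and continuity then propagates the vanishing to the lateral boundary $\{x_0\}\times(0,t^*]$, which is where the contradiction with the hypothesis lands; and the variant of the strong minimum principle in which the extremal value is exactly $0$ holds for arbitrary $c$, so the exponential reduction is only truly needed for the weak ($u\ge 0$) step. Neither point affects the validity of your argument.
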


Geometrically we are looking at the behavior of
$u$ on a rectangle.  If we know that $u$ is nonzero
on $3$ sides of $\partial R$ then we know
$u$ is nonzero on all of $R$.   
The side $[x_0,x_1] \times \{0\}$ is
the bottom side of $R$  and the side $[x_0,x_1] \times \{\tau\}$ is
the top.  Here we are picturing time as running vertically
and space as running horizontally.

Here is the well-known Sturmian Principle.

\begin{theorem}[The Sturmian Principle]
  Suppose $u$ is nonzero on
  $\{x_0,x_1\} \times [0,\tau]$.
  Then the number $N_t$ of times $u(*,t)$ vanishes on $(x_0,x_1)$
is non-increasing with time. Moreover, if
$u(*,t)$ vanishes to second order somewhere on $(x_0,x_1)$ then
$N_{t'}<N_t$ for all $t' \in (t,\tau]$.
\end{theorem}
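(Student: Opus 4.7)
The plan is to localize the problem at each zero of $u$, use the fact that zeros are of finite order for any nontrivial solution of a strictly parabolic equation with smooth coefficients, and show that zeros of $u(\cdot,t)$ can only collide and disappear under the evolution, never spontaneously appear. A preliminary reduction: because the coefficients are smooth and $a>0$, any nontrivial solution has only finite-order zeros on each time slice (otherwise unique continuation would force $u\equiv 0$), so the zero set of $u(\cdot,t)$ in $[x_0,x_1]$ is discrete; combined with the hypothesis that $u$ is nonzero on $\{x_0,x_1\}\times[0,\tau]$, this shows $N_t$ is finite for every $t$ and rules out loss or gain of zeros through the lateral sides of the rectangle.

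The first step is to handle \emph{simple} zeros, interior points $(x_*,t_*)$ with $u(x_*,t_*)=0$ but $u_x(x_*,t_*)\neq 0$. Here the implicit function theorem produces a smooth local arc $x=\xi(t)$ of zeros through $(x_*,t_*)$, so the zero simply moves continuously in time and contributes equally to $N_t$ on either side. Consequently, any change in $N_t$ must occur at some interior \emph{multiple} zero, a point where $u$ and $u_x$ both vanish.

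The main obstacle is the analysis at such a multiple zero. After translating to $(x_*,t_*)=(0,0)$, let $k\geq 2$ be the order of vanishing of $u(\cdot,0)$ at $0$, so that $u(x,0)=cx^{k}+O(x^{k+1})$ with $c\neq 0$. The leading-order evolution of $u$ near the origin is controlled by the constant-coefficient heat equation $v_{t}=a(0,0)\,v_{xx}$ with initial datum $v(x,0)=cx^{k}$, whose explicit solution is (up to normalization) a rescaled Hermite polynomial of degree $k$; a direct calculation shows this solution has at most $k-1$ real zeros for any $t>0$, all of them simple. To transfer the conclusion to the true solution $u$, I would note that $w=u-v$ satisfies a strictly parabolic equation with forcing of size $O(x^{k+1})$, use Schauder estimates on a small parabolic cylinder around the origin to show $u$ is $C^{k}$-close to $v$ for $|t|$ small, and then count zeros using that the simple zeros of $v(\cdot,t)$ for small $t>0$ persist under $C^{1}$-small perturbations. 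The strong maximum principle, applied to $u$ itself on a thin parabolic strip sandwiched between barriers $\pm\varepsilon v$ on the lateral boundary, ensures no extra zero can slip in. The net effect is a strict local drop in the zero count as $t$ crosses $t_{*}$.

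Finally, globalization is immediate. At any time $t\in[0,\tau)$, cover the finite zero set of $u(\cdot,t)$ by small disjoint open intervals. The simple zeros in these intervals contribute equally to $N_{t'}$ for $t'$ slightly larger, while each multiple zero contributes a strict drop by the local analysis. Summing across intervals yields both the monotone non-increase of $N_{t}$ and the strict inequality $N_{t'}<N_{t}$ for all $t'\in(t,\tau]$ whenever $u(\cdot,t)$ has a multiple zero in $(x_0,x_1)$.
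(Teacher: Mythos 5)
The paper does not actually prove this statement: it credits the result to C. Sturm and explicitly defers the proof to Angenent's paper \cite{ANG2}, so there is no in-paper argument to compare against. Your sketch does follow the skeleton of Angenent's proof (finite vanishing order via unique continuation, simple zeros tracked by the implicit function theorem, multiple zeros modeled by heat polynomials / rescaled Hermite polynomials, then a perturbation argument), so the strategy is the standard one. But as written the sketch leaves the two genuinely hard steps unjustified, and one of the closing moves is incorrect.

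First, ``any nontrivial solution has only finite-order zeros on each time slice (otherwise unique continuation would force $u\equiv 0$)'' is not an elementary remark for parabolic equations; spatial unique continuation from infinite-order vanishing at a single point on a fixed time slice is itself a theorem requiring Carleman-type estimates or, as in \cite{ANG2}, a bespoke iteration argument, and it is essentially equivalent to the local structure result you then use. Second, the transfer from the constant-coefficient model $v$ to the true solution $u$ is the crux, and ``use Schauder estimates to show $u$ is $C^k$-close to $v$'' does not do it: near the multiple zero both $u$ and $v$ are $O(|x|^k+|t|^{k/2})$, so an additive $C^k$-closeness estimate of the same order does not control whether $u$ has extra sign changes in the region where $v$ is small. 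Angenent handles this with a scale-invariant (parabolic blow-up) estimate, which is the real content of the theorem. Third, the proposed barrier comparison ``between $\pm\varepsilon v$'' is not meaningful: $v(\cdot,t)$ itself changes sign, so $\pm\varepsilon v$ are not admissible barriers and the strong maximum principle cannot be applied in the way you describe. Finally, note that your remark that the heat polynomial has ``at most $k-1$ real zeros for $t>0$'' is much weaker than what is true (it has $0$ or $1$), and that for the stated inequality $N_{t'}<N_t$ to hold one must count zeros with multiplicity (as the paper does in its applications); with a naive count of zero \emph{points}, an odd-order zero at time $t$ contributes one point both at time $t$ and immediately after, so the inequality would fail.
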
 

C. Sturm discovered this principle in 1836. see
\cite{S}.
The proof of the above version of the Sturmian Principle may be found in \cite{ANG2}.
For more references about these theorems, see \cite{EV} or
\cite{ANG2}.
Note that if $u,v$ solve the same strictly parabolic equation then so
does $w=u-v$.   This yields the following corollary.

\begin{corollary}
  \label{STURM1}
  Suppose $w$ is nonzero on
  $\{x_0,x_1\} \times [0,\tau]$.
  Then the number $N_t$ of zeroes for $w(*,t)$ on $(x_1,x_2)$
  is finite and non-increasing. 
  Moreover, at any time $t$ when $w(*,t)$ vanishes to second order,
  we have $N_{t'}<N_t$ for all $t'>(t,\tau]$.
\end{corollary}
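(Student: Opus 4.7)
The corollary is essentially a direct restatement of the Sturmian Principle for the difference $w = u - v$, so the proof plan is short.

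The plan is first to exploit linearity. The equation \eqref{GPDE} is linear in the unknown because its coefficients $a,b,c$ depend only on $(x,t)$. Consequently, if $u$ and $v$ are any two solutions on $U\times[0,\tau]$, their difference $w = u-v$ satisfies
$$w_t = a(x,t)\, w_{xx} + b(x,t)\, w_x + c(x,t)\, w$$
with the \emph{same} coefficients. In particular $w$ is itself a solution of a strictly parabolic equation on $U\times[0,\tau]$, and so the hypotheses of the Sturmian Principle (Theorem 2.2) apply to $w$ verbatim: the hypothesis that $w$ is nonzero on the two lateral sides $\{x_0,x_1\}\times[0,\tau]$ is exactly what is needed. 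Applying the Sturmian Principle therefore yields the non-increase of $N_t$ and the strict drop $N_{t'}<N_t$ for $t'\in(t,\tau]$ whenever $w(\cdot,t)$ has a second-order zero in $(x_0,x_1)$.

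The only content not already built into Theorem 2.2 is the finiteness assertion $N_t<\infty$. For this I would invoke the standard regularity theory for strictly parabolic equations: for every $t>0$ the function $w(\cdot,t)$ is real analytic in $x$ on the open set $U$, and so its zero set in the compact interval $[x_0,x_1]$ consists of isolated points, hence is finite. Combined with the hypothesis that $w$ does not vanish at the endpoints, this gives $N_t<\infty$ for each $t\in(0,\tau]$. For $t=0$, finiteness either holds by the hypothesis that the initial data are real analytic (which is the case in every application within the paper, since the curve shortening flow of a monotone figure-eight is real analytic), or else can be bypassed by passing to an arbitrarily small positive time and using the monotonicity already established.

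There is no real obstacle here: the only thing to be careful about is citing the correct linearity structure to produce the parabolic equation for $w$, and using analyticity in $x$ (a standard consequence of strict parabolicity) to upgrade the qualitative Sturmian statement to the quantitative statement that $N_t$ is an actual finite integer.
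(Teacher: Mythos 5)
Your proof is correct and follows essentially the same route as the paper, which disposes of the corollary in one remark: linearity of Equation \eqref{GPDE} makes $w=u-v$ a solution of the same strictly parabolic equation, so the Sturmian Principle applies to $w$ directly. The only difference is that you explicitly justify the finiteness of $N_t$ via spatial analyticity of solutions; the paper leaves this implicit, deferring to the cited reference \cite{ANG2}, where finiteness is part of Angenent's theorem. Your added care here is harmless and arguably clarifying, though not strictly needed given the citation.
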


\noindent
{\bf Curvilinear Domains:\/}
Rectangular domains are too
restrictive for one of our purposes.   The same
principles work when the rectangle in
question is replaced by a piecewise
analytic quadrilateral $\cal Q$  with the
following two properties:
\begin{enumerate}
\item The top and bottom sides are line segments, with the bottom
  one corresponding to time $0$ and the top one corresponding to time $\tau$.
\item The function $w$ does not vanish on the other two sides.
\end{enumerate}
The other two sides play the role of
 $\{x_0\} \times [0,\tau]$ and
$\{x_1\} \times [0,\tau]$.
The main issue is that the non-horizontal sides prevent
zeros from ``leaking in or out''.

\begin{center}
\resizebox{!}{1.6in}{\includegraphics{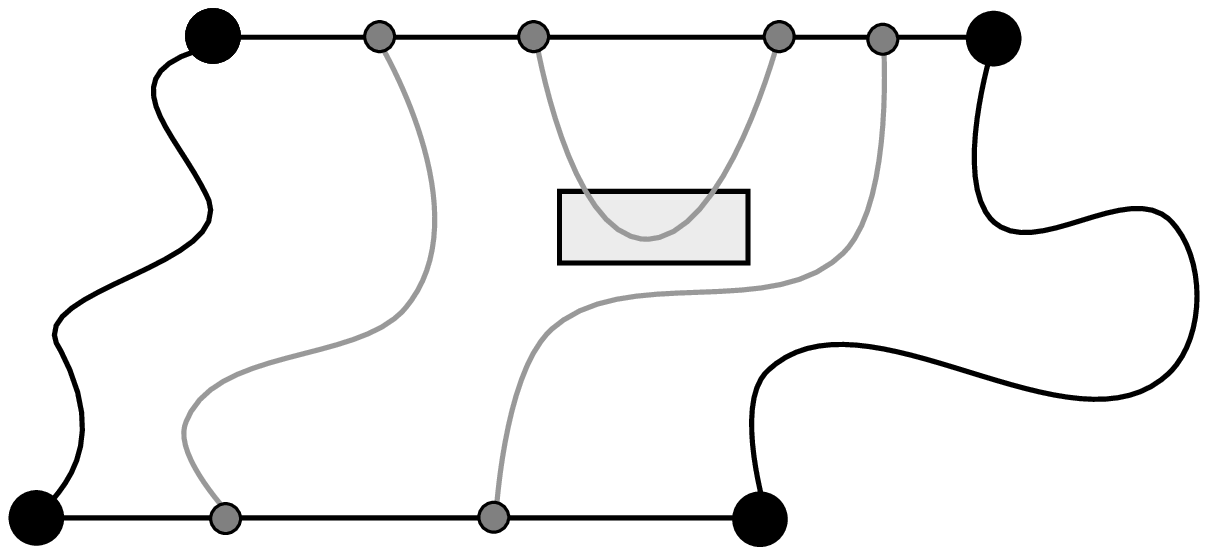}}
\newline
    {\bf Figure 2:\/} The Curvilinear case
\end{center}

Let us explain why the rectilinear principle implies
the curvilinear principle.  Suppose we have a situation where
$w$ has $m$ zeros on the bottom of $\cal Q$ and $n>m$ on the top of $\cal Q$.
Let $I$ be the set of times where $w$ has more than $m$
zeros.  Let $t=\inf I$.  The zeros of $w$ at times
converging to $t$ cannot converge to the non-horizontal
sides of the domain.  Hence at least two of them must
coalesce.  But then we can find a small rectangle
$R \subset \cal Q$ which surrounds these coalescing points.
See the small shaded rectangle in Figure 2.
(If more points coalesce, the picture would look more complicated.)
This gives a contradiction to the rectilinear
principle. 
  
\subsection{Evolution Equations}

The evolution equation for $\kappa$ is given in Equation \ref{MAIN}.
Here it is again.
\begin{equation}
  \label{SP1}
  \kappa_t=(\kappa^2)\kappa_{\theta \theta} + (0)\kappa_\theta +(\kappa^2)\kappa.
\end{equation}
This equation is strictly parabolic.

Let $u=\kappa_{\theta}$.
Differentiating Equation \ref{MAIN} with respect to $\theta$ we get
the evolution equation for $u$:
\begin{equation}
\label{SP2}
u_t=(\kappa^2)u_{\theta \theta} + (2 \kappa u)u_{\theta} +
(3 \kappa^2)u
\end{equation}
All we need to know about this equation is that it is strictly
parabolic.  Also, we only need this equation in this chapter.

Let $y(x,t)$ be the evolution of the height of the curve $C(t)$. 
Let
\begin{equation}
  k(x,t):= k(x,y(x,t))
\end{equation}
denote the signed
curvature at the point $(x,y(x,t))$.
For fixed $t$, the curve is defined in terms of $x$ and $y$ and the
curvature
is given in terms of partial derivatives of $y$ with respect to $x$ holding $t$ fixed.
Note that the domain for $x$ is shrinking to a point.
The following evolution equation for $\mu=k_x$ is derived in \cite{G}:
\begin{equation}
\label{SP4}
\mu_t =(\zeta) \mu_{xx} 
+(-2 y_x y_{xx} \zeta^2) \mu_x +(3k^2)\mu, \hskip 30
pt
\zeta=\frac{1}{1+y_x^2}.
\end{equation}
Again, all we need to know about this equation is that it is strictly
parabolic
and we only need this equation in this chapter.

Equations \ref{SP1} and \ref{SP2} are valid on the domain
\begin{equation}
{\cal D\/}=\bigcup_{t \in [0,T)} (-\alpha(t),\pi+\alpha(t)) \times \{t\}.
\end{equation}
Equation \ref{SP4} is valid away from places where our curve
has vertical tangents. In particular on any
time range $[0,t]$ for $t<T$ it is valid on
each strand in some fixed neighborhood 
of the double point.

\subsection{Monotonicity}
\label{preserve}

Now we prove that the curve shortening flow preserves
the monotonicity property.  We assume that $C(0)$ is monotone.

\begin{lemma}
  \label{forward1}
  If $\kappa_{\theta}(\theta,t)>0$ for all
  $\theta \in (0,\pi/2)$ and all $t \leq t_0$, then
  \begin{enumerate}
    \item
      $k_x(0,t)>0$ for all $0\leq t \leq t_0$.
      \item  $\kappa_{\theta \theta}(\pi/2,t) \not =0$ for all $0 \leq
        t \leq
        t_0$.
        \end{enumerate}
  \end{lemma}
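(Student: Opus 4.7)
The plan is to deduce both conclusions from the Sturmian principle applied to strictly parabolic PDEs, exploiting persistent zeros that come from the dihedral symmetry.

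For Part (2), apply the Sturmian principle to $u=\kappa_\theta$, which satisfies the strictly parabolic Equation \ref{SP2}. The $x$-axis reflection symmetry is preserved by the flow and gives $\kappa(\theta,t)=\kappa(\pi-\theta,t)$, hence $u(\pi/2,t)\equiv 0$ and $u(\theta,t)=-u(\pi-\theta,t)$. Together with the standing hypothesis $u>0$ on $(0,\pi/2)$ for $t\le t_0$, this forces $u<0$ on $(\pi/2,\pi)$. Fix a small $\epsilon>0$ and work on the rectangle $[\epsilon,\pi-\epsilon]\times[0,t_0]$: the lateral sides are free of zeros of $u$, and for each $t$ the only zero of $u(\cdot,t)$ on $(\epsilon,\pi-\epsilon)$ is at $\pi/2$. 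The monotone definition supplies $\kappa_{\theta\theta}(\pi/2,0)\ne 0$, which handles $t=0$. If $\kappa_{\theta\theta}(\pi/2,t_1)=0$ for some first $t_1\in(0,t_0]$, then $u$ vanishes to second order at $(\pi/2,t_1)$, and the Sturmian principle forces the zero count to drop strictly below $1$ for $t>t_1$, contradicting the persistent zero at $\pi/2$.

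For Part (1), the parallel argument uses the signed curvature $k(x,t)$ on a strand through the origin, expressed as a graph $y=y(x,t)$. This $k$ satisfies a strictly parabolic PDE in $(x,t)$, derivable from the graph form $y_t=y_{xx}/(1+y_x^2)$ of the CSF and the formula $k=y_{xx}/(1+y_x^2)^{3/2}$ (the computation that underlies Equation \ref{SP4}; see \cite{G}). Rotation-by-$\pi$ symmetry of the strand together with convexity of the two lobes forces $k(0,t)\equiv 0$ with $k>0$ on one side of the origin and $k<0$ on the other, so $x=0$ is a persistent zero of $k(\cdot,t)$. At $t=0$, the monotone definition yields $k_x(0,0)>0$: the condition $\kappa_\theta>0$ on $(-\alpha(0),\pi/2)$ makes $k$ strictly monotone through the origin in the correct direction, and the nondegeneracy of the signed curvature at the double point rules out $k_x(0,0)=0$. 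Choose $\delta>0$ small enough that, for all $t\in[0,t_0]$, the strand remains a graph over $[-\delta,\delta]$ and $k(\pm\delta,t)\ne 0$; this is possible since $t_0<T$ keeps the lobes bounded away from degeneration. Applying the Sturmian principle to $k$ on $[-\delta,\delta]\times[0,t_0]$, the zero count is $\ge 1$ throughout (from the persistent zero at $x=0$) and equals $1$ at $t=0$ (since $k_x(0,0)>0$ makes the zero simple), so non-increasingness pins it at $1$ for all $t$. If $k_x(0,t_1)=0$ for some first $t_1\in(0,t_0]$, then $k$ has a double zero at $(0,t_1)$, forcing a strict drop in the count, a contradiction. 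Continuity from $k_x(0,0)>0$ upgrades $k_x(0,t)\ne 0$ to $k_x(0,t)>0$.

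The only real technical subtlety is choosing spatial domains on which the lateral boundary values of the solution are nonzero uniformly on $[0,t_0]$. For Part (2) this is automatic from the hypothesis. For Part (1) it uses $t_0<T$, or alternatively a curvilinear domain in the style of the discussion after Figure 2, whose non-horizontal sides track points just off the double point on each side, where $k$ has a definite sign by convexity of the lobes.
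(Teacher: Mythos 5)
Your Part~(2) argument is essentially the paper's: you apply the Sturmian principle to $u=\kappa_\theta$ on a horizontal strip about $\theta=\pi/2$ (the paper uses the small strip $[\pi/2-\epsilon,\pi/2+\epsilon]\times[0,t_0]$, you use the larger $[\epsilon,\pi-\epsilon]\times[0,t_0]$; both are fine since the hypothesis plus reflection symmetry controls the sidewalls), with the persistent simple zero at $\theta=\pi/2$ and the initial nondegeneracy $\kappa_{\theta\theta}(\pi/2,0)\ne0$ doing the work.

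Your Part~(1) argument is correct but takes a genuinely different route from the paper's. The paper works with $\mu=k_x$ (which satisfies the explicitly stated Equation~\ref{SP4}) and invokes the Maximum Principle on $[-\epsilon,\epsilon]\times[0,t_0]$, starting from $\mu(0,0)=k_x(0,0)>0$. You instead apply the Sturmian Principle directly to the signed curvature $k(x,t)$ itself, exploiting the persistent simple zero at $x=0$ forced by the $180^\circ$ rotational symmetry. The trade-offs: your route needs a strictly parabolic PDE for $k$ (not just for $k_x$), which the paper never writes down --- you correctly flag this as following from the same graph computation in \cite{G} that produces Equation~\ref{SP4}, but a reader should note it is not available off-the-shelf in the paper. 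In compensation, your sidewall conditions $k(\pm\delta,t)\ne0$ come for free from convexity of the lobes, which is cleaner than controlling the sign of $k_x$ on the sidewalls; and indeed your Part~(1) argument does not really use the lemma's hypothesis $\kappa_\theta>0$ at all, only convexity plus the nondegeneracy of $k$ at the double point at time $0$, which is slightly stronger than what the lemma asserts. Both approaches are applications of the same parabolic-comparison philosophy, so I would call your version a valid alternate proof rather than a fundamentally new idea.
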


  \begin{proof}
    For the first statement, it
    suffices to prove that $k_x(0,t_0)>0$.
    Not first that
    the last monotonicity property implies that $k_x(0,0)>0$.
The reason is that near the double point the $x$-coordinate
is a smooth invertible function of arc length.
We apply the Maximum principle to a
  rectangle of the form $[-\epsilon,\epsilon] \times [0,t_0]$
  and we get a contradiction.

  For the second statement,
  it suffices to prove that $\kappa_{\theta \theta}(\pi/2,t_0) \not =0$.
This is an application of the Sturmian Principle for $\kappa_{\theta}$
with respect to a
rectangle of the form $[\pi/2-\epsilon,\pi/2+\epsilon] \times
[0,t_0]$.  We are assuming that $\kappa_{\theta \theta}(\pi/2,0) \not
=0$.
Hence $\kappa_{\theta}(*,0)$ only vanishes to first order at $\pi/2$. Also
$\kappa_{\theta}$ does not vanish on
the vertical sides of the rectangle, by symmetry.
\end{proof}

\begin{lemma}
  \label{forward2}
   $\kappa_{\theta}(*,t)>0$ on $(-\alpha(t),\pi/2)$ for all $t<T$.
\end{lemma}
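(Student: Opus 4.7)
The plan is to apply the Sturmian principle, in its curvilinear form, to $u := \kappa_{\theta}$, which satisfies the strictly parabolic equation \ref{SP2}, on the full right-lobe parametrization $\theta \in (-\alpha(t), \pi + \alpha(t))$. By the $4$-fold dihedral symmetry of $C(t)$, $\kappa(\cdot, t)$ is even about $\theta = \pi/2$, so $u(\cdot, t)$ is odd about $\pi/2$; in particular $u(\pi/2, t) = 0$ for all $t$. At $t = 0$, the monotonicity hypothesis together with $\kappa_{\theta \theta}(\pi/2, 0) \neq 0$ ensures that $u(\cdot, 0)$ has a single, first-order, zero on $(-\alpha(0), \pi + \alpha(0))$, at $\pi/2$.

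I argue by contradiction. Let $t^*$ be the infimum of times $t \in (0, T)$ at which $\kappa_\theta$ acquires a zero in $(-\alpha(t), \pi/2)$, and suppose $t^* < T$. For every $\tau < t^*$, the hypothesis of Lemma \ref{forward1} holds on $[0, \tau]$, so $k_x(0, t) > 0$ there. Near the double point, $x$ and $\theta$ are related on each strand by a smooth change of variable whose Jacobian does not vanish, so this is equivalent to $\kappa_\theta(-\alpha(t), t) > 0$; by antisymmetry $\kappa_\theta(\pi + \alpha(t), t) < 0$. Thus $u$ is nonzero on the two curvilinear sides of
\[
\mathcal{Q}_\tau := \{(\theta, t) : 0 \leq t \leq \tau,\ -\alpha(t) \leq \theta \leq \pi + \alpha(t)\},
\]
and the curvilinear Sturmian principle gives that the zero-count $N_t$ of $u(\cdot, t)$ is non-increasing on $[0, \tau]$. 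Since $N_0 = 1$ and $u(\pi/2, t) = 0$ forces $N_t \geq 1$, we have $N_t = 1$ throughout $[0, \tau]$, with the single zero at $\pi/2$. Letting $\tau \uparrow t^*$ gives $\kappa_\theta > 0$ on $(-\alpha(t), \pi/2)$ for every $t < t^*$.

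By continuity, $\kappa_\theta \geq 0$ on $(-\alpha(t^*), \pi/2)$ at $t = t^*$. Suppose for contradiction that $\kappa_\theta(\theta^*, t^*) = 0$ for some $\theta^* \in (-\alpha(t^*), \pi/2)$. Since the flow preserves real analyticity in $\theta$, $\theta^*$ is an isolated zero of $u(\cdot, t^*)$. I apply the Sturmian principle a second time to $u$ on a small curvilinear rectangle inside $\mathcal{Q}_{t^*}$: its spatial slice at $t = t^*$ is $[\theta^* - \epsilon, \theta^* + \epsilon]$ for small $\epsilon$, and its lateral sides are chosen (as slight inward perturbations of $\theta = -\alpha(t)$ if $\theta^*$ is close to $-\alpha(t^*)$) to stay inside $(-\alpha(t), \pi/2)$ for $t \in [0, t^*]$ and to carry $u > 0$ by the work above. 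At $t = 0$, $u > 0$ throughout the spatial slice by the monotonicity hypothesis, so $N_0 = 0$; at $t = t^*$ the zero $\theta^*$ gives $N_{t^*} \geq 1$, a contradiction. Hence $\kappa_\theta > 0$ strictly on $(-\alpha(t^*), \pi/2)$ at $t = t^*$. Now Lemma \ref{forward1} applied at $t_0 = t^*$ yields $k_x(0, t^*) > 0$, i.e. $\kappa_\theta(-\alpha(t^*), t^*) > 0$, and $\kappa_{\theta \theta}(\pi/2, t^*) \neq 0$ (with sign forced negative by the structure of $u$ just below $\pi/2$). Continuity in $(\theta, t)$ on the resulting compact pieces then extends the strict positivity to $(-\alpha(t), \pi/2)$ for $t$ in a one-sided neighborhood of $t^*$, contradicting the definition of $t^*$.

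The main obstacle is the moving boundary $\theta = -\alpha(t)$: the Sturmian principle demands $u$ nonzero along this curve, which only Lemma \ref{forward1} delivers, and only under the very hypothesis we are trying to prove. This is what forces the bootstrap along the infimum $t^*$, and it is what forces the small domain in the local Sturmian step to be a curvilinear quadrilateral rather than a rectangle, so that it stays inside $(-\alpha(t), \pi/2)$ at all earlier times even when $\theta^*$ is close to $-\alpha(t^*)$.
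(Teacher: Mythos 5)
Your overall strategy is the same as the paper's: let $t^*$ be the infimum of times at which $u=\kappa_\theta$ vanishes in $(-\alpha(t),\pi/2)$, rule out an interior zero at $t^*$ by a maximum/Sturmian argument on a small domain, and then use Lemma~\ref{forward1} together with a continuity argument at the two endpoints $-\alpha(t)$ and $\pi/2$ to contradict the minimality of $t^*$. Three things deserve comment.

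First, your steps applying the Sturmian Principle on the full right-lobe domain to conclude $N_t=1$ for $t<t^*$ are redundant. Once you know $\kappa_\theta>0$ on $(-\alpha(t),\pi/2)$ for $t<t^*$ (which you observe is automatic), Lemma~\ref{forward1} already hands you both $k_x(0,t)>0$ and $\kappa_{\theta\theta}(\pi/2,t)\neq 0$ at every such $t$; the extra global Sturmian count does not buy anything further.

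Second, and more importantly, the curvilinear Sturmian domain in your interior step is not clearly constructible. You want a domain on $[0,t^*]$ whose top slice is exactly $[\theta^*-\epsilon,\theta^*+\epsilon]$ and whose lateral sides both stay inside $(-\alpha(t),\pi/2)$ for all $t\in[0,t^*]$ and carry $u>0$. But $\alpha(t)$ need not vary monotonically in $t$, and the requirement that both lateral sides terminate at $\theta^*\mp\epsilon$ at $t=t^*$ (with $\theta^*$ possibly very close to $-\alpha(t^*)$) can force them out of the spatial domain, or force the domain to pinch to nothing, at intermediate times. The paper sidesteps this entirely by working on the short time interval $[t^*-\delta,t^*]$ instead of $[0,t^*]$: for small $\delta$ and $\epsilon$ the honest rectangle $[\theta^*-\epsilon,\theta^*+\epsilon]\times[t^*-\delta,t^*]$ lies inside the domain (because $\alpha$ is continuous), $u>0$ on the bottom and on the lateral sides (for $t<t^*$ automatically, and at $t=t^*$ by choosing $\epsilon$ using analyticity), and the Maximum Principle (i.e.\ the $N=0$ case of Sturm) gives the contradiction. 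You should replace your tall curvilinear domain with this short rectangle.

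Third, your final sentence (``continuity in $(\theta,t)$ on the resulting compact pieces then extends the strict positivity\dots'') compresses the real content of the endgame. To contradict the minimality of $t^*$ you must rule out a sequence of zeros $(\theta_n,t_n)$ with $t_n\downarrow t^*$; once you know $\kappa_\theta(\cdot,t^*)>0$ on the open interval, any such $\theta_n$ must converge (along a subsequence) either to $-\alpha(t^*)$ or to $\pi/2$, and each of these is killed by one of the two conclusions of Lemma~\ref{forward1} — exactly the two sub-cases the paper carries out. Spell this out; the phrase ``extends by continuity'' hides the place where the nondegeneracy hypotheses are actually used.
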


\begin{proof}
  Recall that $u=\kappa_{\theta}$. We need
  to show is that $u(\theta,t)>0$ on the domain $\cal D$.
  Suppose this fails.
  Let $I$ denote the set of times $t'$ for which $u(*,t')$ vanishes
  somewhere.
  Let $t_0=\inf I$. There are several cases to consider.

Suppose first that $t_0 \in I$.
Then there is some $(\theta,t_0) \in \cal D$ such that
$u(\theta,t_0)=0$ but $u(*,t)>0$ for all $t \in [0,t_0)$.
In this case we get a contradiction by applying the
Maximum Principle to $u$ on a rectangle
$[\theta-\epsilon,\theta+\epsilon] \times [t_0-\epsilon,t_0].$
For sufficiently small $\epsilon$ this rectangle belongs to
$\cal D$.  Since  $u$ is analytic we can further
choose $\epsilon$ so that $u(\theta \pm \epsilon,t_0)>0$.
We now contradict the Maximum Principle. Hence
$t_0 \not \in I$.

Let $(\theta_n,t_n)$ be a sequence of points in $\cal D$
such that $u(\theta_n,t_n) = 0$ and $t_n \to t_0$.
Since $t_0 \not \in I$, we must have (after using symmetry and
passing to a subsequence) either
$\theta_n \to -\alpha(t)$ or $\theta_n \to \pi/2$.
By Lemma \ref{forward1} we know that
$k_x(0,t_0)>0$ and $\kappa_{\theta \theta}(\pi/2,t_0) \not =0$.
Consider the cases.

\begin{itemize}

\item
Suppose $\theta_n \to -\alpha(t_0)$.  By the Chain rule,
$k_x(x_n,t_n)=0$ for a sequence $x_n \to 0$.
But then $k_x(0,t_0)=0$ by continuity.  This is a contradiction.

\item
Suppose $\theta_n \to \pi/2$.  Since we are now in the
interior of the domain $\cal D$ and $u$ is a smooth function, we have
$$\kappa_{\theta \theta}(\pi/2,t_0)=\lim_{n \to \infty}
\frac{u(\pi/2,t_n)-u(\theta_n,t_n)}{\pi/2-\theta_n}=0.$$
This is a contradiction.
\end{itemize}
This completes the proof.
\end{proof}

Let us now check that $C(t)$ is monotone for any $t<T$.
\begin{itemize}
  \item The curve shortening flow preserves analyticity, hence $C(t)$
    is analytic.
    \item The curve shortening flow respects symmetry, so $C(t)$ has
      $4$-fold symmetry.
    \item As we mentioned in the introduction, Angenent proves in
      \cite{ANG2} and \cite{ANG3} that
        $C(t)$ has convex lobes for all $t<T$.
      \item Lemma \ref{forward2} says exactly that
          $\kappa_{\theta}(\theta,t)>0$ for $\theta \in
          (-\alpha(t),\pi/2)$.
          \item Lemma \ref{forward1} shows that $\kappa_{\theta
              \theta}(\pi/2,t) \not =0$.
          \item Lemma \ref{forward1} shows that the signed curvature of
            $C(t)$, as a function of arc length, does not vanish to second order
            at the double point.
\end{itemize}

\newpage

\section{Some Asymptotic Results}
\label{usecurv}

The results in this chapter use the assumption
that $C(0)$  is monotone.  As we proved in the last chapter,
this means that $C(t)$ is monotone for all $t<T$.
One tool we will use several times is the
well-known Tait-Kneser Theorem from differential geometry.
One can find a proof in practically any book of
differential geometry.

 \begin{theorem}[Tait-Kneser]
    Suppose $\gamma$ is a curve of strictly monotone increasing
    curvature.  Then the osculating disks of $\gamma$
    are strictly nested.  The largest one is at the
    initial endpoint and the smallest one is at the final
    endpoint.  In particular, $\gamma$ lies inside the osculating
    disk at its initial endpoint and outside the osculating disk at
    its final endpoint.
  \end{theorem}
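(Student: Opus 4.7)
The plan is to parametrize $\gamma$ by arc length $s \in [s_0, s_1]$ and track both the center of curvature $C(s) = \gamma(s) + k(s)^{-1} N(s)$ and the radius of curvature $r(s) = 1/k(s)$ as $s$ varies. The heart of the proof is to show that the evolute speed and the radius decay rate match, namely $|C'(s)| = -r'(s)$, so that the triangle inequality, integrated in $s$, forces the nesting.

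First I would compute $C'(s)$ using the Frenet relations $\gamma'(s) = T(s)$ and $N'(s) = -k(s) T(s)$. A direct calculation gives
\[ C'(s) = T(s) + \Bigl(\tfrac{1}{k(s)}\Bigr)' N(s) + \tfrac{1}{k(s)} N'(s) = -\tfrac{k'(s)}{k(s)^2}\, N(s), \]
so $|C'(s)| = k'(s)/k(s)^2 = -r'(s)$, using $k' > 0$. Integrating from $s_1 < s_2$ and applying the triangle inequality,
\[ |C(s_2) - C(s_1)| \leq \int_{s_1}^{s_2} |C'(s)|\, ds = r(s_1) - r(s_2). \]
The inequality is in fact strict because $C'(s)$ points in the direction $-N(s)$, which genuinely rotates (since $T$ rotates at positive rate $k > 0$); a nonconstant unit direction cannot produce equality in the triangle inequality.

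With the strict bound $|C(s_2) - C(s_1)| < r(s_1) - r(s_2)$ in hand, the nesting is immediate: for any $p$ in the closed disk of radius $r(s_2)$ around $C(s_2)$,
\[ |p - C(s_1)| \leq |p - C(s_2)| + |C(s_2) - C(s_1)| < r(s_2) + \bigl(r(s_1) - r(s_2)\bigr) = r(s_1), \]
so $p$ lies in the open disk of radius $r(s_1)$ around $C(s_1)$. Thus the osculating disk at $s_2$ is strictly contained in the osculating disk at $s_1$.

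For the last sentence, $\gamma(s_1)$ lies on the boundary of its own osculating disk at $s_1$, and applying the nesting to $s_0 < s_1$ puts $\gamma(s_1)$ strictly inside the osculating disk at the initial endpoint $s_0$; similarly, $\gamma(s_0)$ sits on the boundary of its own osculating disk but strictly outside every later osculating disk, so in particular outside the one at the final endpoint. The only delicate step in all of this is the strict triangle inequality; that is where monotonicity of $k$ really enters (via rotation of $N$), and I would phrase it by observing that if $N$ were constant on any subinterval then $T$ and hence $k$ would have to be constant there, contradicting $k' > 0$.
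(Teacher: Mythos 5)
Your proof is correct and is the standard argument for Tait--Kneser: compute $C'(s)=-\tfrac{k'(s)}{k(s)^2}N(s)$ via the Frenet equations so that the evolute speed equals $-r'(s)$, integrate, and invoke strictness in the integral triangle inequality because the unit direction $-N(s)$ rotates (as $N'=-kT\neq 0$). The paper itself does not prove this lemma; it simply states it and refers the reader to standard differential geometry texts, so there is no paper proof to compare against, but your argument is sound (implicitly using $k>0$, which holds on the convex lobes where the paper applies it) and is exactly the proof one finds in the literature.
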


  Recall that $[-X(t),X(t)] \times [-Y(t),Y(t)]$ is the
bounding box of $C(t)$.

  \begin{lemma}[Bounding Box]
    \label{bb0}
    $\lim_{t \to T} Y(t)/X(t) =0$.
  \end{lemma}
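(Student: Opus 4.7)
The plan is to combine the Tait-Kneser Theorem, the preserved monotonicity of $C(t)$, and the decay of the angle $\alpha(t)$ at the double point (which I would establish as a separate, parallel lemma in \S 3). The rough target inequality is $Y(t)/X(t) \leq f(\alpha(t))$ for some $f$ with $f(0) = 0$, so that angle decay forces the aspect ratio to zero.

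First, apply Tait-Kneser to the top half of $C_+(t)$. By the preservation of monotonicity proved in \S 2, $\kappa_\theta > 0$ on $(-\alpha(t),\pi/2)$, so the curvature is strictly increasing along this arc, minimized at the double point $\theta = -\alpha(t)$ and maximized at $\theta = \pi/2$. Tait-Kneser then says the arc lies inside the closed osculating disk $D(t)$ at the double point. This disk is tangent to $C_+(t)$ at the origin, has radius $R(t) := 1/\kappa(-\alpha(t), t)$, and has its center on the interior side of the tangent line at distance $R(t)$ from the origin. Since that tangent line makes angle $\alpha(t)$ with the $x$-axis, a direct computation places the center at approximately $(R(t)\sin\alpha(t),\, -R(t)\cos\alpha(t))$ (the sign being dictated by the lobe lying on the lower-right side of the tangent line, with the corresponding statement for the reflected strand holding by $4$-fold symmetry).

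Second, read off bounds from the disk: its topmost point has height $R(t)(1-\cos\alpha(t))$ above the $x$-axis and its rightmost point has abscissa $R(t)(1+\sin\alpha(t))$. Since the whole top-half arc lies inside $D(t)$, and by symmetry the same holds for the bottom half in the reflected disk, one obtains
\[
Y(t) \leq R(t)(1-\cos\alpha(t)), \qquad X(t) \leq R(t)(1+\sin\alpha(t)),
\]
and hence
\[
\frac{Y(t)}{X(t)} \;\leq\; \frac{1-\cos\alpha(t)}{1+\sin\alpha(t)} \;\leq\; 1-\cos\alpha(t).
\]
(A softer but still sufficient inequality, $Y(t) \leq X(t)\tan\alpha(t)$, follows from convexity alone, by noting that the two tangent lines at the double point confine the lobe to a wedge of opening $2\alpha(t)$.)

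Third, and this is the key remaining ingredient, one must show $\alpha(t) \to 0$ as $t \to T$. This is the genuine content and the main obstacle; convexity and Tait-Kneser by themselves only give $Y/X$ bounded by a universal constant less than $1$. The plan is to handle this in a companion lemma via Sturm's principle applied to an auxiliary zero-count, or via a maximum-principle comparison between $\kappa$ and a slowly translating Grim Reaper profile, exploiting the monotonicity condition and the fact that the lobes shrink to the origin. Once $\alpha(t) \to 0$ is in hand, the bounds above yield $Y(t)/X(t) \to 0$, completing the proof of the lemma.
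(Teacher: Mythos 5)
Your approach is a genuine departure from the paper's, which is short and more indirect: it invokes Grayson's theorem from \cite{G3} that the isoperimetric ratio of $C(t)$ tends to $\infty$, deduces that $Y(t)/X(t)$ must therefore tend to $0$ or to $\infty$, and rules out the second alternative via the integral identity $X(t)=\int_t^T\kappa(\pi/2,u)\,du>\int_t^T\kappa(0,u)\,du=Y(t)$, which follows from the monotonicity condition alone. Tait--Kneser and angle decay play no role there. In fact the Angle Decay Lemma in \S3 is proved \emph{after} and \emph{using} the Bounding Box Lemma (it relies on the Curvature Blowup Lemma and on the collapsing aspect ratio of the rescaled curve $\Gamma(t)=C(t)/X(t)$). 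Your plan therefore inverts the paper's logical order, and the crucial independent proof that $\alpha(t)\to 0$ is deferred to an unwritten companion lemma with only a vague sketch. That gap is fatal: you yourself call it ``the genuine content and the main obstacle,'' and without it nothing has been proved.

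There is also a concrete error in the Tait--Kneser step. From the two \emph{upper} bounds $Y(t)\le R(t)(1-\cos\alpha(t))$ and $X(t)\le R(t)(1+\sin\alpha(t))$ one cannot deduce $Y(t)/X(t)\le(1-\cos\alpha(t))/(1+\sin\alpha(t))$; bounding a ratio from above requires a \emph{lower} bound on the denominator, and the osculating disk gives only an upper bound on $X(t)$. Moreover, the signed curvature of the figure-eight vanishes at the double point (this is precisely what the last nondegeneracy condition in the definition of monotonicity is about), so $R(t)=1/\kappa(-\alpha(t),t)$ is infinite and the osculating circle degenerates to the tangent line, giving nothing beyond convexity. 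Your parenthetical wedge estimate $Y(t)\le X(t)\tan\alpha(t)$, which does follow from convexity and is correct, renders Tait--Kneser unnecessary here; but it too reduces the lemma entirely to the unproved claim $\alpha(t)\to 0$.
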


  \begin{proof}
    The perimeter of $C(t)$ and
    the area of the region bounded by $C(t)$ are respectively within a
    factor of $2$ of the perimeter and area of the bounding box of $C(t)$.
    Thus, Grayson's isoperimetric result tells us that the aspect ratio
    of the bounding box tends to $0$.  This means that either
    $Y(t)/X(t) \to 0$ or $Y(t)/X(t)\to \infty$ as $t \to T$.

Given that  $X_t(t)=-\kappa(\pi/2,t)$ and 
$Y_t(t)=-\kappa(0,t)$, we have
$$Y(t)=\int_t^T \kappa(0,u)\ du<
\int_t^T \kappa(\pi/2,u)\ du = X(t).$$
The inequality uses the monotonicity condition.
This rules out the second option.
\end{proof}

\begin{lemma}[Curvature Blowup]
  \label{blow00}
  $\lim_{t \to T} \kappa(\theta,t)=\infty$ for any $\theta \in (0,\pi/2]$.
\end{lemma}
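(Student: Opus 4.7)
The plan is to combine an elementary integral formula for $Y(t)$ with the preserved monotonicity of $\kappa$ in $\theta$ to obtain a direct lower bound on $\kappa(\theta_0,t)$ that forces blow-up.

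First I would observe that $Y(t)\to 0$ as $t\to T$. The Bounding Box Lemma gives $Y(t)/X(t)\to 0$, and since $X_t = -\kappa(\pi/2,t)<0$ we have $X(t)\le X(0)$, so $Y(t) = [Y(t)/X(t)]\cdot X(t)\to 0$. Next I would set up the identity
\begin{equation*}
Y(t) \;=\; \int_0^{\pi/2}\frac{\sin\theta}{\kappa(\theta,t)}\,d\theta
\end{equation*}
by parametrizing the sub-arc of the upper half of $C_+(t)$ running from the topmost point (horizontal tangent, $\theta=0$) to the rightmost point (vertical tangent, $\theta=\pi/2$) by the tangent angle $\theta$. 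Since $ds=d\theta/\kappa(\theta,t)$ and the $y$-coordinate drops from $Y(t)$ to $0$ with $|dy/ds|=\sin\theta$ along this sub-arc, the identity follows. Sanity check: a circle of radius $R$ gives $Y=R=\int_0^{\pi/2} R\sin\theta\,d\theta$.

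With these in place, the lemma is immediate. The monotonicity of $C(t)$, preserved by the flow (Section~\ref{preserve}), gives $\kappa_\theta(\cdot,t)>0$ on $(-\alpha(t),\pi/2)$, so $\kappa(\theta,t)\le\kappa(\theta_0,t)$ for every $\theta\in(0,\theta_0]$ and every $\theta_0\in(0,\pi/2]$. Substituting this into the formula above,
\begin{equation*}
Y(t) \;\ge\; \int_0^{\theta_0}\frac{\sin\theta}{\kappa(\theta_0,t)}\,d\theta \;=\; \frac{1-\cos\theta_0}{\kappa(\theta_0,t)},
\end{equation*}
so $\kappa(\theta_0,t)\ge (1-\cos\theta_0)/Y(t)\to\infty$ as $t\to T$.

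The only delicate step is pinning down the sign conventions when setting up the integral formula for $Y(t)$; once the parametrization of the arc from the topmost to the rightmost point by $\theta$ is in hand, the computation is routine. Everything else is a one-line consequence of monotonicity together with the Bounding Box Lemma.
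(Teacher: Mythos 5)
Your proof is correct, but it takes a genuinely different route from the paper's. The paper rescales to $\Gamma(t)=C(t)/X(t)$ and invokes the Tait--Kneser theorem: monotonicity of curvature nests the osculating disks, and elementary trigonometry on the disk $\Delta(\theta,t)$ inside the width-$2$ bounding box forces its radius to be at most $1/(2\sin\theta)$, giving the clean pointwise estimate $X(t)\kappa(\theta,t)\ge 2\sin\theta$. You instead derive blowup from the integral identity $Y(t)=\int_0^{\pi/2}\sin\theta/\kappa(\theta,t)\,d\theta$ together with monotonicity of $\kappa$ in $\theta$, obtaining $\kappa(\theta_0,t)\ge(1-\cos\theta_0)/Y(t)$ and then observing $Y(t)\to 0$ via the Bounding Box Lemma and the monotone decay of $X$. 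Both approaches lean on the same two inputs---monotonicity and the shrinking aspect ratio---but package them differently. Your integral identity is exactly Equation \ref{new1}, which the paper derives later (Section 5.1) for the $Y$-bound; it is an elementary one-line calculation from $dy=-\sin\theta\,ds$ and $\kappa\,ds=d\theta$, so there is no circularity in using it here. The tradeoff: the paper's Tait--Kneser argument yields a scale-invariant bound $X(t)\kappa\ge 2\sin\theta$ that is reused verbatim in the Angle Decay lemma, so it earns its keep; your integral approach is arguably more self-contained and purely analytic, and the quantitative constant $1-\cos\theta_0$ degrades near $\theta_0=0$ in a way that mirrors the paper's $\sin\theta$, so neither is sharper than the other in any essential way.
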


\begin{proof}
  Let $\Gamma(t)=C(t)/X(t)$.
  This is a rescaled version of $C(t)$ whose
bounding box has width $2$.  The height
of the bounding box tends to $0$ by
Lemma \ref{bb0}.
Let $$K(\theta,t)=X(t)\kappa(\theta,t)$$ be
the curvature of
$\Gamma(t)$ at the point where the tangent
angle is $\theta$.    Since we have
$\lim_{t \to T} X(t)=0$, it suffices to prove (say)
that $K(\theta,t) \geq 2\sin(\theta)$.

Let $\Delta=\Delta(\theta,t)$ be the osculating disk to $\Gamma(t)$ at
$\Gamma(\theta,t)$.  Since the curvature of
$\Gamma(t)$ is monotone increasing,
the Tait Kneser Theorem says that the arc of
$\Gamma(t)$ connecting the origin to $\Gamma(\theta,t)$ lies
outside $\Delta$.  This forces $\partial \Delta$
to cross the horizontal line $L$ through $\Gamma(\theta,t)$
twice
inside the bounding box and in the positive quadrant.
The intersection $L \cap \Delta$ has
length at most $1$ and the angle between $L$ and
$\partial \Delta$ at the intersection points is $\theta$.
It follows from trigonometry that $\Delta$ has
radius at most $1/(2\sin\theta)$.  Hence $K(\theta,t) \geq 2
\sin(\theta)$.
\end{proof}

\begin{lemma}[Angle Decay]
  \label{angle00}
  $\lim_{t \to T} \alpha(t)=0$.
\end{lemma}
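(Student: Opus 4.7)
The plan is by contradiction: if $\alpha(t) \not\to 0$, then there exist $\alpha_0 > 0$ and a sequence $t_n \to T$ with $\alpha(t_n) \geq \alpha_0$; I will derive $Y(t_n)/X(t_n) \geq \sin(\alpha_0/2)/4$, contradicting the Bounding Box Lemma.

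In the paper's $\theta$-parametrization of the top half of $C_+(t)$ the unit tangent is $(\cos\theta, -\sin\theta)$ and the arclength element is $d\theta/\kappa(\theta, t)$, so integrating the velocity from the origin ($\theta = -\alpha$) to the top of the right lobe ($\theta = 0$) gives
$$x_0(t) = \int_{-\alpha(t)}^{0}\!\frac{\cos\theta}{\kappa(\theta, t)}\,d\theta, \qquad Y(t) = \int_{-\alpha(t)}^{0}\!\frac{-\sin\theta}{\kappa(\theta, t)}\,d\theta,$$
where $(x_0(t), Y(t))$ is the top of the lobe. Two applications of Tait-Kneser to the osculating disk $\Delta_0$ at $\theta = 0$ (center $(x_0, Y - R_0)$, radius $R_0 = 1/\kappa(0, t)$) pin down $x_0 \geq X/2$: the arc $\theta \in [-\alpha, 0]$ lies outside $\Delta_0$ (whose larger-curvature endpoint is $\theta = 0$), so the origin is outside $\Delta_0$, giving $R_0 \leq (x_0^2 + Y^2)/(2Y)$; the arc $\theta \in [0, \pi/2]$ lies inside $\Delta_0$ (whose smaller-curvature endpoint is $\theta = 0$), so $(X, 0)$ is inside $\Delta_0$, giving $R_0 \geq ((X - x_0)^2 + Y^2)/(2Y)$. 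Combining these yields $(X - x_0)^2 \leq x_0^2$ and hence $x_0(t) \geq X(t)/2$.

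For the main estimate, split the integrals at $\theta = -\alpha/2$ and set $A(t) := \int_{-\alpha}^{-\alpha/2} d\theta/\kappa(\theta, t)$. By monotonicity $\kappa_\theta > 0$, the bound $\kappa \leq \kappa(-\alpha/2, t)$ on $[-\alpha, -\alpha/2]$ gives $A \geq (\alpha/2)/\kappa(-\alpha/2, t)$, while $\kappa \geq \kappa(-\alpha/2, t)$ on $[-\alpha/2, 0]$ gives $\int_{-\alpha/2}^{0} d\theta/\kappa \leq (\alpha/2)/\kappa(-\alpha/2, t) \leq A$; hence $x_0(t) \leq 2A(t)$. Since also $-\sin\theta \geq \sin(\alpha/2)$ on $[-\alpha, -\alpha/2]$, we have $Y(t) \geq \sin(\alpha(t)/2)\, A(t)$, so $Y(t)/x_0(t) \geq \sin(\alpha(t)/2)/2$. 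Combining with $x_0 \geq X/2$,
$$\frac{Y(t_n)}{X(t_n)} \;\geq\; \frac{Y(t_n)}{2\, x_0(t_n)} \;\geq\; \frac{\sin(\alpha(t_n)/2)}{4} \;\geq\; \frac{\sin(\alpha_0/2)}{4},$$
contradicting $Y/X \to 0$.

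The hardest step is this integral-plus-monotonicity estimate — the two-sided comparison around the midpoint value $\kappa(-\alpha/2, t)$ is the key trick, which converts the bulk of the top-half information into a usable ratio $Y/x_0$. One also needs to note that the integrals are well-defined near $\theta = -\alpha$: the nondegeneracy preserved in Section 2 forces $\kappa \sim \sqrt{\theta + \alpha}$ there, making $d\theta/\kappa$ integrable at that endpoint.
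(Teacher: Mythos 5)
Your proof is correct, and it takes a genuinely different route from the paper's. The paper's proof works with the rescaled curve $\Gamma(t)=C(t)/X(t)$: it introduces a line $L$ through the origin at angle $\delta/2$, shows that $\Gamma(t_n)$ must cross $L$ at a point $p_n$ with $\|p_n\|\to 0$, argues from convexity that the total turning along the arc from the origin to $p_n$ is at least $\delta/2$, deduces (since the arc length goes to zero and curvature is monotone) that the curvature at $p_n$ is eventually $\geq 4$, and finally invokes Tait--Kneser to trap the rest of $\Gamma(t_n)$ in a disk of radius $1/4$, contradicting the fact that the curve reaches $(1,0)$. Your argument instead works directly with $C(t)$ via the integral formulas for $x_0(t)$ and $Y(t)$ over $\theta\in[-\alpha,0]$, uses Tait--Kneser twice (at $\theta=0$, with the arcs on either side) to pin down $x_0\geq X/2$, and then exploits the monotonicity of $\kappa$ via a two-sided comparison around the midpoint value $\kappa(-\alpha/2,t)$ to force $Y/x_0\geq\sin(\alpha/2)/2$. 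Combining gives the explicit lower bound $Y/X\geq\sin(\alpha/2)/4$, which contradicts the Bounding Box Lemma if $\alpha$ does not decay. Both proofs rely on the same three ingredients --- the Bounding Box Lemma, the monotonicity of curvature, and Tait--Kneser --- but you use Tait--Kneser only for the crude geometric fact $x_0\geq X/2$, while the paper uses it to trap the curve in a small osculating disk. Your version is more computational and has the mild advantage of producing a quantitative estimate $Y/X\geq\sin(\alpha/2)/4$; the paper's is more qualitative and geometric. Two small points worth keeping explicit: your first Tait--Kneser application is on an arc whose left endpoint has $\kappa=0$ (so the osculating disk there is degenerate); this is harmless since the conclusion you need --- that the origin lies outside $\Delta_0$ --- follows by applying Tait--Kneser on $[-\alpha+\epsilon,0]$ and letting $\epsilon\to 0$. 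And as you note at the end, the improper integrals over $[-\alpha,0]$ converge because the nondegeneracy condition preserved in Section~2 gives $\kappa\sim\sqrt{\theta+\alpha}$ near the endpoint.
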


\begin{proof}
  Let $\Gamma(t)$ be as in the previous lemma.
Suppose that there is a sequence of
times $t_n \to T$ such that
$\alpha(t_n)>\delta>0$ for some
constant $\delta$.   Let
$L$ be the line through the origin
which makes an angle of $\delta/2$ with
the $x$-axis.  Again, the
height of the bounding box for $\Gamma(t_n)$ tends to
$0$ as $n \to \infty$.  Hence, $L$
hits the top of the bounding box at a point
whose distance to the origin tends to $0$
as $n\to \infty$.

By construction $\Gamma(t_n)$ starts out
from the origin lying to the left of $L$.
Since $\Gamma(t_n)$ lies inside its bounding box,
we see that 
$\Gamma(t_n)$ crosses $L$ at some point
$p_n$ such that $\|p_n\| \to 0$.
The total variation of the tangent angle of $\Gamma(t)$ along the arc
connecting $(0,0)$ to $p_n$ is, by convexity, at least $\delta/2$.
Since the length of this arc tends to $0$, and since
the curvature is monotone increasing, the curvature of
$\Gamma_n$ at $p_n$ is eventually at least $4$.

By the Tait-Kneser
Theorem the arc of $\Gamma(t)$ connecting $p_n$
to $(1,0)$ is trapped in a disk of radius
$1/4$ which contains $p_n$ in its boundary.
This is a contradiction.
\end{proof}

\begin{corollary}[Area Asymptotics]
  \label{AA}
The first formula in
Equation \ref{AS0} is true.
\end{corollary}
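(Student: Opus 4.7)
The plan is to combine the standard area–evolution identity for curve shortening with the Angle Decay Lemma just established. For a piecewise-smooth convex planar domain whose boundary evolves by CSF, $dA/dt = -\int k\, ds$, with the integral taken over the smooth part of the boundary; a corner that is fixed in space contributes nothing to $A'(t)$. In our setting the 4-fold dihedral symmetry pins the double point at the origin for all $t \in [0,T)$, so this formula applies lobe-by-lobe with no corner correction and the variational derivation $A'(t) = \int \langle \partial_t C, -N\rangle\, ds$ picks up only the smooth contribution.

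Next, I would split into two lobes using symmetry, writing $A(t) = 2 A_+(t)$ where $A_+(t)$ is the area of $C_+(t)$. The right lobe is a convex region with one corner at the origin; its two outgoing tangent directions make angles $\pm \alpha(t)$ with the $x$-axis, so the interior angle there is $2\alpha(t)$ and the exterior angle is $\pi - 2\alpha(t)$. Since the total exterior turning around any simple closed convex curve is $2\pi$, Gauss–Bonnet bookkeeping gives
\begin{equation*}
\int_{C_+(t)} k\, ds \;=\; 2\pi - (\pi - 2\alpha(t)) \;=\; \pi + 2\alpha(t),
\end{equation*}
and therefore $A'(t) = -2(\pi + 2\alpha(t))$.

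Finally, because 4-fold dihedral symmetry implies 2-fold rotational symmetry, Corollary 2 of \cite{DHW} (cited in the introduction) tells us that $C(t)$ collapses to the origin as $t \to T$, so $A(T)=0$. Integrating $A'$ from $t$ to $T$ then gives
\begin{equation*}
\frac{A(t)}{T-t} \;=\; \frac{1}{T-t}\int_t^T 2\bigl(\pi + 2\alpha(u)\bigr)\,du,
\end{equation*}
which is simply the average of the continuous integrand $2(\pi + 2\alpha(u))$ over $[t, T]$. By the Angle Decay Lemma, $\alpha(u) \to 0$ as $u \to T$, so this average tends to $2\pi$, which is the first formula in Equation \ref{AS0}. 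The only genuinely delicate ingredient is the $\alpha(t) \to 0$ input—everything else is bookkeeping—so the main work has already been done in Lemma \ref{angle00}.
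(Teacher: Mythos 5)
Your proof is correct and essentially parallels the paper's: compute $A'(t)$ as the negative total absolute curvature, use the Angle Decay Lemma (\ref{angle00}) to conclude $A'(t)\to -2\pi$, and then pass to the ratio $A(t)/(T-t)$ using $A(T)=0$. You replace the paper's L'H\^{o}pital step with a Fundamental-Theorem-of-Calculus averaging argument, which is an equivalent bookkeeping choice, and you justify $A(T)=0$ by citing \cite{DHW}, a fact the paper also relies on implicitly. One small discrepancy worth flagging: your Gauss--Bonnet count gives $\int_{C(t)} k\,ds = 2\pi + 4\alpha(t)$ over the whole figure-eight (each lobe contributing $\pi+2\alpha$), whereas the paper writes $2\pi + 2\alpha(t)$; your coefficient is the correct one, though both expressions tend to $2\pi$ and so the conclusion is unaffected either way.
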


\begin{proof}
  Let $k(s,t)$ denote absolute value of the
  curvature as a function of arc length and time.
  Consider the two curves $C(t)$ and $C(t+\delta)$ for some
  very small $\delta$. At any given point $(s,t)$ on $C(t)$ the
  distance from $C(t)$ to $C(t+\delta)$ equals $\kappa(s,t) \delta$.
  up to order $(\delta)^2$.  So, up to order $\delta^2$ the
  total change in area is
  $$\int k(s,t) ds=2\pi + 2 \alpha(t).$$
  By the definition of the derivative, we therefore have
  $A_t(t) = -2\pi -2 \alpha(t)$.
    Hence
  $$\lim_{t \to T} A_t(t)=-2\pi-\lim_{t \to T} \alpha(t)=-2\pi.$$
  We set $B(t)=T-t$.
  Since $\lim_{t \to T} A(t)=\lim_{t \to T} B(t)=0$ we have
  $$\lim_{t \to T} \frac{A(t)}{T-t}=
  \lim_{t \to T} \frac{A(t)}{B(t)}=
  \lim_{t \to T} \frac{A_t(t)}{B_t(t)}=\frac{-2\pi}{-1}=2\pi$$
  by L'H\^{o}pital's rule.
\end{proof}

Our final estimate involves a different rescaling of our curve,
and it will be useful when we establish the last formula in
Equation \ref{AS0}.  The name of the lemma will also
become clear later on. (The reader might want to just skim this
result on the first pass.) Let
\begin{equation}
  D(t)=\frac{1}{\sqrt{T-t}} C(t).
\end{equation}
Given Corollary \ref{AA}, the area of $D(t)$ converges to $2\pi$ as
$t \to T$.  This curve is getting very long and thin.
Let $D_1(t)$ denote the arc of $D(t)$ which starts at the origin,
has length $1$, and starts out moving into the positive quadrant.
If $t$ is sufficiently close to $T$, the arc $D_1(t)$ lies entirely
in the positive quadrant. 

Let $K(\theta,t)=\kappa(\theta,t) \sqrt{T-t}$ denote the
curvature of $D(t)$ at $D(\theta,t)$.
Let ${\bf n\/}(\theta)$ denote the outward normal to $D(t)$ at
$D(\theta,t)$.  Even though we picture this vector as based at
$D(\theta,t)$, it is independent of $t$.  Finally define
\begin{equation}
  P(\theta,t)=D(\theta,t) \cdot {\bf n\/}(\theta).
  \end{equation}

\begin{lemma}[Nodal Function Estimate]
  \label{nega}
  If $D_1(t)$ lies in the positive quadrant then
  $P(\theta,t)<K(\theta,t)$ for any angle $\theta$
  such that $D(\theta,t) \in D_1(t)$.
\end{lemma}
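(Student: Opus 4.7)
Our strategy is to combine the Tait-Kneser Theorem with the arc-length bound $|D(\theta,t)| \leq 1$ that is available on $D_1(t)$.

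By Lemma \ref{forward2}, $\kappa(\cdot,t)$ is strictly increasing in $\theta$ on $(-\alpha(t), \pi/2)$, hence so is $K(\cdot,t) = \sqrt{T-t}\,\kappa(\cdot,t)$. Since $d\theta/ds = \kappa > 0$, the tangent angle $\theta$ is a monotone function of arc length along the smooth arc comprising the upper half of the right lobe of $D(t)$, so the curvature of $D(t)$ is a strictly monotone function of arc length along this arc. For any $\theta$ with $D(\theta, t) \in D_1(t)$, we apply Tait-Kneser to the subarc of $D(t)$ from the origin to $D(\theta, t)$ and conclude that the origin lies strictly outside the osculating disk of $D(t)$ at $D(\theta,t)$.

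That osculating disk has radius $1/K(\theta,t)$ and, lying on the concave side of the convex arc, has center $D(\theta,t) - {\bf n\/}(\theta)/K(\theta,t)$. The condition that the origin lies strictly outside it reads
$$\left|D(\theta,t) - \frac{{\bf n\/}(\theta)}{K(\theta,t)}\right|^2 > \frac{1}{K(\theta,t)^2},$$
which, after expanding and using $P = D\cdot {\bf n\/}$, simplifies to $P(\theta,t) < |D(\theta,t)|^2 K(\theta,t)/2$. Since $D(\theta,t) \in D_1(t)$, the straight-line distance $|D(\theta,t)|$ from the origin to $D(\theta,t)$ is at most the arc length along $D(t)$, which is at most $1$ by the definition of $D_1(t)$. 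Therefore $P(\theta,t) < K(\theta,t)/2 < K(\theta,t)$, which is the desired strict inequality (with room to spare).

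The main thing to be careful about is that Tait-Kneser is stated for smooth curves of strictly monotone curvature, while the figure-eight as a whole has a corner at the origin. The upper half of the right lobe is, however, a smooth arc even at the origin: the corner of the right lobe there arises from two smooth branches of the figure-eight crossing, not from non-smoothness of either branch, so Tait-Kneser applies directly. If one prefers to avoid relying on this, one can apply Tait-Kneser to subarcs from $D(\theta',t)$ to $D(\theta,t)$ for $\theta' > -\alpha(t)$ and take $\theta' \to -\alpha(t)^+$; the resulting non-strict version of the geometric inequality is absorbed by the factor $1/2$ in the final step, and the strict inequality $P < K$ still follows.
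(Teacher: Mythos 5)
Your proof is correct, but it takes a genuinely different route from the paper's. The paper argues directly with angles: writing $s$ for arc length from the origin and $a$ for the angle between the tangent $T_s$ and the chord $D(s)$, it bounds $P(s) = \|D(s)\|\sin(a) < sa$, then bounds $a$ by the total turning $\overline{a} = \int_0^s K\,d\sigma < sK(s)$ via convexity and monotonicity, and concludes $P < s^2 K < K$. You instead invoke Tait--Kneser to place the origin outside the osculating disk at $D(\theta,t)$, expand the resulting inequality $|D - \mathbf{n}/K|^2 > 1/K^2$ to get $P < K|D|^2/2$, and finish with the same chord-versus-arclength bound $|D| \le s \le 1$. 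The two approaches are morally close (both rest on curvature monotonicity plus the unit arc-length bound, and both yield an estimate of the form $P < s^2 K$ up to a constant), but yours is slicker in that it reuses Tait--Kneser, which the paper has already stated and applied twice in Section 3, rather than redoing an angle computation by hand. One refinement to your cautionary paragraph: the delicate point at the origin is not really that the two branches of the figure-eight cross there, but that the curvature $K$ vanishes there (the signed curvature changes sign at the double point), so the osculating disk at the initial endpoint degenerates to a half-plane. Your limiting fallback -- applying Tait--Kneser on subarcs from $D(\theta',t)$ with $\theta' > -\alpha(t)$ and letting $\theta' \to -\alpha(t)^+$ -- handles this correctly, and as you note the slack from the factor $1/2$ absorbs any loss of strictness.
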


\begin{proof}
  We fix $t$ and suppress it from our notation.
  Let $s$ denote the arc length along $D$ chosen
  so that $s=0$ corresponds to the origin.
  Let $T_s$ denote the unit tangent vector to
  $D$ at $s$, chosen so that the first coordinate is positive.
  
  Consider the situation at a point corresponding to
  $s<1$ along $D$.  Let $a$ denote the angle between
  $T_s$ and $D(s)$.
  Note that $P>0$ by convexity. We have
  $$P(s) = D(s) \cdot {\bf n\/}(s) = \|D(s)\| \sin(a)<
  s \sin(a)<sa,$$

\begin{center}
 \resizebox{!}{1.8in}{\includegraphics{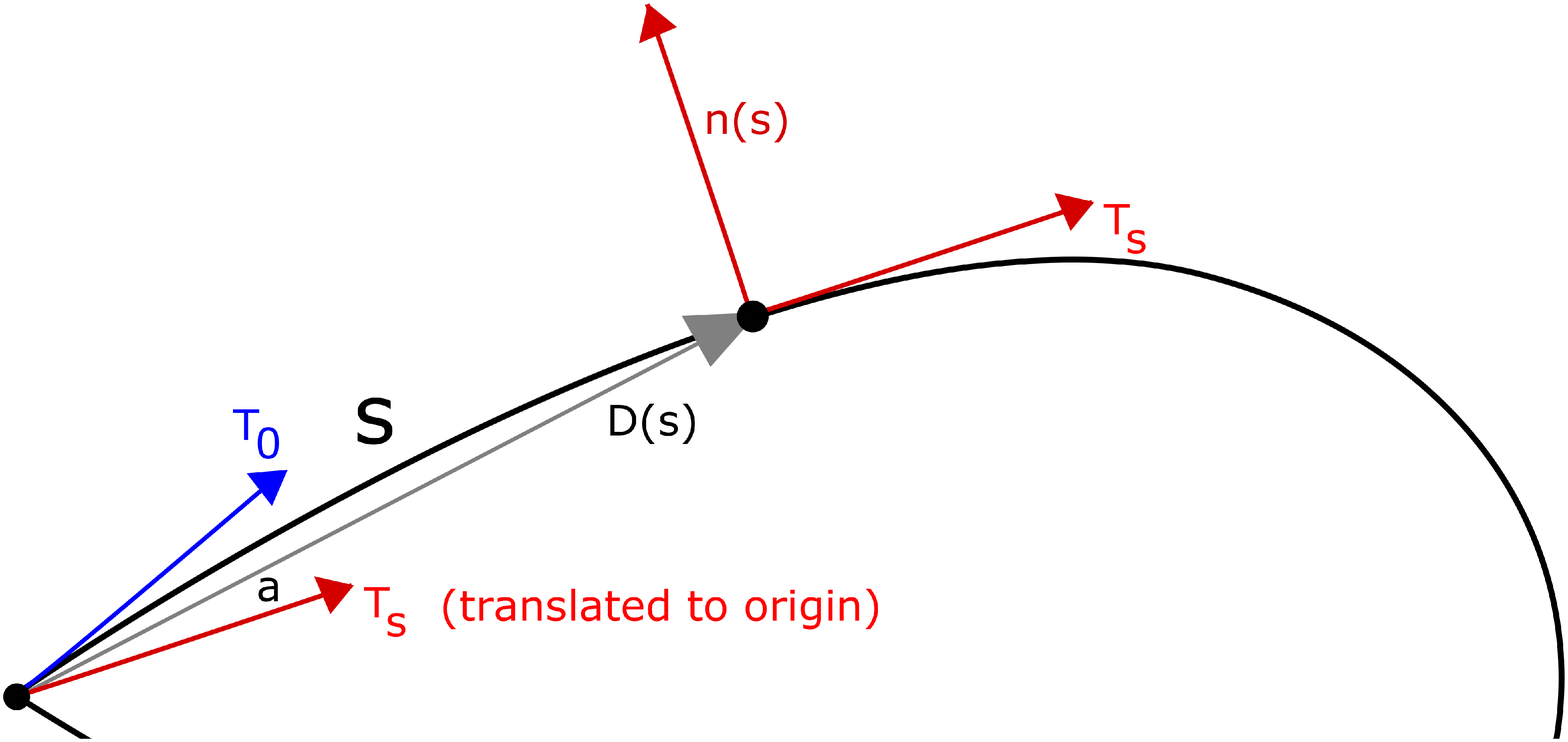}}
\newline
{\bf Figure 3:\/} The relevant vectors
\end{center}

  By convexity, the vector $D(s)$ lies in the sector bounded
  by $T_0$ and $T_s$.  Hence $a$ is less than the
  angle $\overline a$ between $T_0$ and $T_s$.
  But then, given the definition of curvature,
    $$a <\overline a =\int_0^s K(\sigma) d\sigma < sK(s).$$
    The last inequality uses the monotonicity of the curvature.
  Putting our two estimates together we have
  $P(s)<s^2 K(s)<K(s)$.
\end{proof}

\section{The Grim Reaper Theorem}
\label{REAPER}

\subsection{Counting Zeros}

Our first lemma has nothing to do with the
flow.  A very similar principle is used in
\cite{ANG}.
Let $J \subset {\bf R\/}$ be some interval.
Call a function $g: J \to {\bf R\/}$ {\it small\/}
if
\begin{equation}
  \sup_{J} g^2 + (g')^2<1.
\end{equation}
Call $J$ {\it small\/} if it has length
at most $\pi$.  Every small interval is contained
in a closed interval of length $\pi$.  Closed intervals
of length $\pi$ count as being small.

\begin{lemma}
  \label{scaling}
  If $g$ is a small function and $J$ is a small interval
  then the difference $w(x)=g(x)-\sin(x)$ vanishes at most twice on $J$,
  counting multiplicity.
\end{lemma}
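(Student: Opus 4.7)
The plan is to extract, at each zero $x_0$ of $w=g-\sin$, a sharp relation between $w'(x_0)$ and $\cos(x_0)$ that comes directly from the smallness hypothesis. Since $g(x_0)=\sin(x_0)$, the inequality $g(x_0)^2+g'(x_0)^2<1=\sin^2(x_0)+\cos^2(x_0)$ reduces immediately to $g'(x_0)^2<\cos^2(x_0)$. From this I read off three facts at once: $\cos(x_0)\neq 0$; $|g'(x_0)|<|\cos(x_0)|$, so that $w'(x_0)=g'(x_0)-\cos(x_0)\neq 0$; and the sign of $w'(x_0)$ is the opposite of the sign of $\cos(x_0)$. In particular every zero of $w$ is already simple, so the clause ``at most twice counting multiplicity'' collapses to ``at most two distinct zeros.''

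With that in hand, I would argue by contradiction. Suppose $w$ has three zeros $x_1<x_2<x_3$ in $J$. Because they are simple, $w$ changes sign at each $x_i$, so $w'(x_1)$, $w'(x_2)$, $w'(x_3)$ alternate in sign. The sign relation from the previous paragraph then forces $\cos(x_1)$, $\cos(x_2)$, $\cos(x_3)$ to alternate in sign as well. The intermediate value theorem then produces zeros $\xi_1\in(x_1,x_2)$ and $\xi_2\in(x_2,x_3)$ of $\cos$.

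To finish I only need the $\pi$-periodic spacing of the zeros of $\cos$: distinct zeros are at least $\pi$ apart, so $\xi_2-\xi_1\geq\pi$. On the other hand, $\xi_1>x_1$ and $\xi_2<x_3$ give the strict bound $\xi_2-\xi_1<x_3-x_1\leq|J|\leq\pi$, which contradicts $\xi_2-\xi_1\geq\pi$. Hence $w$ cannot have three zeros in $J$.

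The only slightly delicate step is the observation in the first paragraph, which I would not have guessed a priori: strict containment of $(g,g')$ in the open unit disk, combined with the matching condition $g(x_0)=\sin(x_0)$, pins down the sign of $w'(x_0)$ in terms of $\cos(x_0)$. Once this sign-tracking is in place, the rest of the proof is combinatorial and uses nothing beyond the spacing of the zeros of $\cos$.
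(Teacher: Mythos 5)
Your proof is correct, and it takes a genuinely different route from the paper's, although both hinge on the same key observation extracted from the smallness hypothesis: at a zero $x_0$ of $w$ one has $g(x_0)=\sin(x_0)$, hence $g'(x_0)^2<\cos^2(x_0)$, so every zero is simple. The paper (following the Angenent-style argument it cites) then partitions $J$ into at most two maximal subintervals on which $\sin$ is monotone and argues, by comparing the graphs of $g$ and $\sin$ between two consecutive intersection points, that each monotone piece contains at most one zero of $w$; this requires a short case analysis on whether $J$ contains zero, one, or two critical points of $\sin$. You instead push the pointwise inequality one step further to the sign relation $\operatorname{sign} w'(x_0)=-\operatorname{sign}\cos(x_0)$, track this sign across three consecutive zeros of $w$ (the alternation of $w'$ forces an alternation of $\cos$), and then invoke the intermediate value theorem plus the $\pi$-spacing of the zeros of $\cos$ to contradict $|J|\leq\pi$. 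Your route avoids the paper's case split on extreme points and makes the role of the length-$\pi$ hypothesis very transparent. The one place worth being a bit more careful is the phrase ``suppose $w$ has three zeros $x_1<x_2<x_3$'': for the derivative signs to alternate you need them to be \emph{consecutive} zeros (no other zero of $w$ in between), which is available because all zeros are simple and hence isolated, so any three-or-more zeros contain a consecutive triple.
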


\begin{proof}
  Let $f(x)=\sin(x)$.  We note the
crucial property that
$$f^2+(f')^2=1>g^2 + (g')^2.$$
  Let $F$ and $G$ respectively denote the
  graphs of $F$ and $G$.  These graphs
  must be transverse wherever they
  intersect.  Otherwise we would have
$g^2 + (g')^2 =1$ at an intersection
point. This is impossible.  We show that
$f=g$ at most twice.  Given the transversality
just mentioned, this is equivalent to the
statement that $w=g-f$ vanishes at most
twice on $J$, counting multiplicity. 

As usual in calculus, say that $x \in J$ is an
{\it extreme point\/} if $f'(x)=0$.
The only way that $J$ can contain two extreme
points is if $J$ has length $\pi$, and
the endpoints are the two extreme points,
and $|f|=1$ at these endpoints.
In this case $f \not = g$ at the endpoints because
$|g|<1$.  So, even in this case, 
we can replace $J$ by a
smaller interval which contains all the
points where $f=g$.  Thus, we can assume
without loss of generality that
$J$ contains at most one extreme point.

Suppose first that $J$ has no extreme points.
Then $f$ is either monotone increasing on
$J$ or monotone decreasing.  Consider the
case when $f$ is monotone increasing.
Suppose it happens that there are two consecutive points
$x_1,x_2 \in J$ where $f$ and $g$ agree.
The portion of $G$ between $(x_1,g(x_1))$ and
$(x_2,g(x_2))$ either
lies above $F$ or below.  In the first case we
have $g'(x_1)>f'(x_1)$, which is a contradiction.
In the second case we have
$g'(x_2)>f'(x_2)$ and we have the same
contradiction.  Hence $f(x)=g(x)$ for
at most one point $x \in J$.  The
same argument works when $f$ is monotone
decreasing on $J$.

Now consider the case when $J$ has exactly
one extreme point.  In this case we can
write $J=J_1 \cup J_2$ where $f$ is monotone
on each $J_i$.  In this case, the same argument
above, applied to each of these sub-intervals,
shows that they each have at most one point
where $f=g$.  Hence $J$ has at most $2$ such points.
\end{proof}

\subsection{The Sine Lemma}

Here is the crucial step in the proof of the Grim Reaper Theorem.
This section is devoted to proving the following result.

\begin{lemma}[Sine]
  \label{EM}
  Let $J$ be any closed interval
  contained in $(0,\pi)$.
  Let  $\epsilon>0$ are given.
  If $t$ is sufficiently close to $T$ then
  $$\bigg|\frac{\kappa_{\theta}(\theta,t)}{\kappa(\theta,t)}-
  \frac{\cos(\theta)}{\sin(\theta)}\bigg|< \epsilon,$$
  for all $\theta \in J$.
\end{lemma}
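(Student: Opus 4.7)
The plan is to argue by contradiction using the Sturmian principle (counted with multiplicity, as in the Scaling Lemma) applied to the difference $w_n:=\kappa-\tilde\kappa_n$, where $\tilde\kappa_n(\theta)=c_n\sin(\theta-\phi_n)$ is an osculating Grim Reaper tangent to $\kappa(\cdot,t_n)$ at an alleged bad point $\theta_n$.

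Suppose the conclusion fails: there are $\epsilon>0$ and sequences $t_n\to T$, $\theta_n\in J$, with $|\kappa_\theta(\theta_n,t_n)/\kappa(\theta_n,t_n)-\cot\theta_n|\geq\epsilon$. After passing to a subsequence, $\theta_n\to\theta^*\in J\setminus\{\pi/2\}$ (the value $\pi/2$ is excluded because $\kappa_\theta(\pi/2,\cdot)=0$ by symmetry); by possibly applying the reflection $\theta\mapsto\pi-\theta$, which flips the sign of $\kappa_\theta/\kappa-\cot\theta$, I may arrange $\kappa_\theta/\kappa>\cot\theta_n+\epsilon$. The representative case $\theta_n<\pi/2$ is treated below; the case $\theta_n>\pi/2$ is analogous. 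Define $\phi_n,c_n$ by the tangency conditions $c_n\sin(\theta_n-\phi_n)=\kappa(\theta_n,t_n)$ and $c_n\cos(\theta_n-\phi_n)=\kappa_\theta(\theta_n,t_n)$. Their ratio gives $\cot(\theta_n-\phi_n)>\cot\theta_n+\epsilon$, and monotonicity of $\cot$ forces $\phi_n\geq\delta>0$ for some $\delta=\delta(J,\epsilon)$; Lemma \ref{blow00} gives $c_n\to\infty$. Since $\tilde\kappa_n''+\tilde\kappa_n\equiv 0$, $\tilde\kappa_n$ is a stationary solution of $\kappa_t=\kappa^2(\kappa+\kappa_{\theta\theta})$, and subtraction yields the strictly parabolic linear equation
$$(w_n)_t=\kappa^2\,(w_n)_{\theta\theta}+\kappa^2\,w_n,$$
to which the Sturmian principle applies.

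Next I would count zeros of $w_n$. At time $t_n$ the tangency makes $\theta_n$ a zero of order at least $2$. The decisive extra zero comes from the asymmetry of $\tilde\kappa_n$ (peak at $\pi/2+\phi_n$) versus the $\pi/2$-symmetric $\kappa$: using $\kappa(\pi-\theta,t)=\kappa(\theta,t)$ and sum-to-product,
$$w_n(\pi-\theta_n,t_n)=\kappa(\theta_n,t_n)-c_n\sin(\theta_n+\phi_n)=-2c_n\cos(\theta_n)\sin(\phi_n)<0,$$
while $w_n(0,t_n)=\kappa(0,t_n)+c_n\sin(\phi_n)>0$. So $w_n(\cdot,t_n)$ must change sign at a second zero in $(0,\pi-\theta_n)$, producing at least three zeros counted with multiplicity. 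At time $0$, the Scaling Lemma (Lemma \ref{scaling}) applied to the translated rescaling $g_n(\psi):=\kappa(\psi+\phi_n,0)/c_n$ on a fixed closed subinterval of length at most $\pi$ shows $w_n(\cdot,0)=c_n(g_n-\sin)$ has at most two zeros counted with multiplicity; here $g_n$ is small because $c_n\to\infty$ while $\kappa(\cdot,0)$ and $\kappa_\theta(\cdot,0)$ are uniformly bounded on any compact subinterval of $(-\alpha(0),\pi+\alpha(0))$.

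To close the argument I would apply the Sturmian principle on a curvilinear quadrilateral $Q_n\subset[0,\pi]\times[0,t_n]$ containing all of these zeros. Its left side can be taken at $\theta=\delta/2$, where $\sin(\delta/2-\phi_n)<0$ forces $w_n>0$ at every time in $[0,t_n]$. The right side must be a curve along which $w_n<0$ is maintained, and constructing this curve is the main obstacle: a fixed vertical line $\theta=b<\pi$ does not suffice, since $w_n(b,\cdot)$ can vanish at an intermediate time as $\kappa(b,t)$ grows toward infinity, so a suitable level set of $\tilde\kappa_n-\kappa$ should be used instead. Once $Q_n$ is in place, Sturmian monotonicity yields $3\leq N_{t_n}\leq N_0\leq 2$, the desired contradiction. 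The degenerate sub-case $(\kappa+\kappa_{\theta\theta})(\theta_n,t_n)=0$, in which the tangent zero at $\theta_n$ has multiplicity at least $3$, only strengthens the count at time $t_n$.
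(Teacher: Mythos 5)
Your strategy is the same as the paper's: compare $\kappa$ to the tangent stationary sine wave, form the difference $w_n$, and derive a contradiction by counting zeros of $w_n$ at times $0$ and $t_n$ via the Sturmian principle. Your linearization $(w_n)_t=\kappa^2(w_n)_{\theta\theta}+\kappa^2 w_n$ is correct, your lower bound $\phi_n\geq\delta(J,\epsilon)$ is the right normalization (the paper calls it $\Sigma$), and your explicit extra-zero computation $w_n(\pi-\theta_n,t_n)=-2c_n\cos(\theta_n)\sin(\phi_n)<0$ via the $\theta\mapsto\pi-\theta$ symmetry of $\kappa$ is a nice alternative to the paper's parity argument (the paper instead observes that $w$ has opposite signs at the two sidewalls, forcing an odd zero count at the top, which combined with the double zero at $\theta_n$ gives at least $3$). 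Both variants give the needed count.

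The genuine gap is exactly the one you flag: you have no valid sidewall on the side where $w_n$ must stay negative, and a ``level set of $\tilde\kappa_n-\kappa$'' is not a construction --- nothing tells you such a level set is a graph over $[0,t_n]$, stays inside $[0,\pi]$, or can be completed to a quadrilateral together with the vertical line $\theta=\delta/2$. The paper's resolution uses a fact you never invoke: the signed curvature vanishes at the double point, so $\kappa\to 0$ as $\theta\to -\alpha(t)$ or $\theta\to\pi+\alpha(t)$. Since $|\phi_n|\geq\delta$ while (after discarding an initial time segment, using Lemma \ref{angle00}) $\alpha(t)$ is made negligibly small, the interval $I_n=[\phi_n,\pi+\phi_n]$ sticks out past one curvilinear sidewall of $\mathcal D$ and its other endpoint lies inside. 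The paper then takes $\mathcal Q_n=\Omega_n\cap\mathcal D$ and pushes the curvilinear side inward slightly; on that pushed-in side $\tilde\kappa_n$ stays bounded away from $0$ (because $\theta-\phi_n$ is bounded away from $\{0,\pi\}$ there) while $\kappa$ is uniformly small by compactness of $[0,t_n]$, so $w_n<0$. The other sidewall is the fixed vertical line $\theta=\phi_n$ where $\tilde\kappa_n=0$ and $\kappa>0$, forcing $w_n>0$; your $\theta=\delta/2$ works equally well on that side. This curvilinear Sturmian domain, justified in Section 2 of the paper, is the missing piece. Two smaller remarks: your exclusion of $\theta^*=\pi/2$ is neither clearly justified (it concerns a limit of time-dependent quantities) nor needed, since your lower bound on $\phi_n$ holds regardless; and when applying the Scaling Lemma at $t=0$ you should ensure the bottom of $\mathcal Q_n$ stays inside a fixed compact subinterval of $(-\alpha(0),\pi+\alpha(0))$ so that $\kappa(\cdot,0)$ and $\kappa_\theta(\cdot,0)$ are uniformly bounded --- the same bookkeeping the paper does with its constant $C$ over $[0,\pi]$.
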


 We will assume for
the sake of contradiction that
there is a sequence of times $t_n \to T$ and a sequence
$\{\theta_n\} \in J$ such that
\begin{equation}
\label{contra}
  \bigg|\frac{\kappa_{\theta}(\theta_n,t_n)}{\kappa(\theta_n,t_n)}-
\frac{\cos(\theta_n)}{\sin(\theta_n)}\bigg|>\epsilon.
\end{equation}
Passing to a subsequence we can assume that $\theta_n \to \theta_0 \in
J$.
By compactness of $J$ we can choose a constant
$\Sigma=\Sigma(J,\epsilon)>0$ so that 
\begin{equation}
\label{SHIFT}
\bigg|\frac{\cos(\phi+\theta_n)}{\sin(\phi+\theta_n)}-
\frac{\cos(\theta_n)}{\sin(\theta_n)}\bigg|>\epsilon \hskip 20 pt
\Longrightarrow \hskip 20 pt |\phi|> \Sigma,
\end{equation}
as long as $\phi+\theta_n \in (0,\pi)$.

Call the non-horizontal sides of our domains the
{\it sidewalls\/}.
Thanks to Lemma \ref{angle00} we can omit the initial
portion of our evolution and arrange that
\begin{equation}
  \sup_{t \in [0,T)} \alpha(t) < 10^{-100} \Sigma.
\end{equation}
We are making the horizontal displacement of
the sidewalls of $\cal D$ extremely small
in comparison to the other relevant quantities that arise below.
We don't need the factor of $10^{-100}$; we add it for emphasis.

Let
\begin{equation}
  C=\sup_{\theta \in [0,\pi]} \kappa^2(\theta,0) + \kappa_{\theta}^2(\theta,0), \hskip 30 pt
  B_n=\kappa^2(\theta_n,t_n)+\kappa_\theta^2(\theta_n,t_n).
\end{equation}
By Lemma \ref{blow00} there is some $n$ such that $B_n>C$.
Our motivation for taking $B_n>C$ is the following
corollary of Lemma \ref{scaling}.

\begin{corollary}
\label{scaling2}
Suppose
$$\sup_{\theta \in J} \kappa ^2(\theta,0) + \kappa_{\theta}^2(\theta,0) \leq C.$$
Let $S(\theta)=\sqrt B\sin(\phi + \theta)$ for any value $\phi$.
If $B>C$ then $w(*)=\kappa(*,0)-S(*)$ vanishes at most twice
on $J$, counting multiplicity.
\end{corollary}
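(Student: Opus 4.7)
The plan is to reduce Corollary~\ref{scaling2} to a direct application of Lemma~\ref{scaling} via a rescaling in amplitude together with a translation in angle. The amplitude rescaling is what brings the smallness condition $g^2 + (g')^2 < 1$ into play, and the translation is what absorbs the phase $\phi$ so that the target function becomes exactly $\sin(x)$.

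Concretely, I would set $g(\theta) = \kappa(\theta,0)/\sqrt{B}$. Since $B > C$ and by hypothesis $\kappa^2(\theta,0) + \kappa_\theta^2(\theta,0) \le C$ on $J$, one computes
$$g(\theta)^2 + g'(\theta)^2 = \frac{\kappa^2(\theta,0) + \kappa_\theta^2(\theta,0)}{B} \le \frac{C}{B} < 1,$$
so $g$ is a small function in the sense of the previous subsection. Because $\sqrt{B}$ is a nonzero constant, the zeros of $w(\theta) = \kappa(\theta,0) - \sqrt{B}\sin(\phi+\theta)$ on $J$, counted with multiplicity, coincide with the zeros of $g(\theta) - \sin(\phi+\theta)$. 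Next I would make the substitution $x = \theta + \phi$, setting $h(x) = g(x - \phi)$ on the translated interval $J' = \phi + J$. Translation preserves pointwise values and derivatives, so $h(x)^2 + h'(x)^2 < 1$ on $J'$, and the expression $g(\theta) - \sin(\phi+\theta)$ becomes $h(x) - \sin(x)$. Since $J$ is a closed subinterval of the open interval $(0,\pi)$, it has length strictly less than $\pi$, and so does $J'$; hence $J'$ is a small interval. Lemma~\ref{scaling} then bounds the zeros of $h - \sin$ on $J'$ by two, counting multiplicity, and undoing the substitution gives the same bound for $w$ on $J$.

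There is really no substantive obstacle here: once Lemma~\ref{scaling} is in hand the whole argument is rescaling and bookkeeping. The only point requiring a moment's verification is that the translated interval $J'$ still satisfies the smallness hypothesis of Lemma~\ref{scaling}, but this is automatic since translation preserves length and $J$ itself was small.
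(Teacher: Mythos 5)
Your argument is correct and is exactly what the paper's one-line proof (``This follows from Lemma \ref{scaling} by symmetry and scaling'') is gesturing at: the amplitude rescaling $g=\kappa/\sqrt{B}$ is the ``scaling'' and the phase shift $x=\theta+\phi$ is the ``symmetry.'' The one cosmetic remark is that the corollary only needs $J$ to be a small interval (length at most $\pi$), which is what actually matters when it is later applied to the bottom side of the curvilinear domain $\cal Q$; your appeal to $J\subset(0,\pi)$ is fine but slightly stronger than necessary.
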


\begin{proof}
This follows from Lemma \ref{scaling} by
symmetry and scaling. 
\end{proof}

We fix $n$ for which $B_n>C$. We set $B=B_n$ and $t=t_n$.
There is some angle $\phi$ such that
$$\frac{\kappa_{\theta}(\theta_n,t)}{\kappa(\theta_n,t)}=
\frac{\cos(\phi+\theta_n)}{\sin(\phi+\theta_n)}.$$
For this choice of $\phi$ we have
\begin{equation}
\label{MATCH}
  S(\theta_n)=\sqrt B\sin(\phi+\theta_n)=\kappa(\theta_n,t), \hskip 20 pt
  S_{\theta}(\theta_n)=\sqrt{B}\cos(\phi + \theta_n) =\kappa_{\theta}(\theta_n,t).
\end{equation}
Our function $S$ determines a unique interval $I$ of length $\pi$ such that
$S>0$ on the interior of $I$ and $\theta_n \in I$.  Note also that $S=0$ on $\partial I$.
Let $\Omega=I \times [0,t]$.  This is exactly the
domain considered in \cite{ANG}, but now our proof departs from
\cite{ANG}.  

\begin{lemma}
One sidewall of $\Omega$ is disjoint from the closure of $\cal D$
and the other sidewall of $\Omega$ lies in $\cal D$.
\end{lemma}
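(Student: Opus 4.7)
The plan is to identify the two sidewalls of $\Omega$ explicitly as vertical segments $\{-\phi\}\times[0,t]$ and $\{\pi-\phi\}\times[0,t]$, and then exploit the hierarchy
$$\sup_{s\in[0,t]}\alpha(s)\;<\;10^{-100}\Sigma\;<\;|\phi|\;<\;\pi$$
to place one sidewall strictly outside $\overline{\cal D}$ and the other strictly inside $\cal D$.

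First I would pin down a convenient representative of $\phi$. Since $\kappa(\theta_n,t)>0$, the first relation in \eqref{MATCH} forces $\sin(\phi+\theta_n)>0$, so we may and do choose $\phi$ with $\phi+\theta_n\in(0,\pi)$. The conditions that $S>0$ on the interior of $I$, that $S=0$ on $\partial I$, that $|I|=\pi$, and that $\theta_n\in I$ then pin down $I=[-\phi,\pi-\phi]$, so the two sidewalls of $\Omega$ are $\{-\phi\}\times[0,t]$ and $\{\pi-\phi\}\times[0,t]$.

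Next I would extract the key lower bound $|\phi|>\Sigma$. Combining \eqref{MATCH} with the contradiction hypothesis \eqref{contra} yields
$$\bigg|\frac{\cos(\phi+\theta_n)}{\sin(\phi+\theta_n)}-\frac{\cos(\theta_n)}{\sin(\theta_n)}\bigg|=\bigg|\frac{\kappa_\theta(\theta_n,t)}{\kappa(\theta_n,t)}-\frac{\cos(\theta_n)}{\sin(\theta_n)}\bigg|>\epsilon,$$
and the implication \eqref{SHIFT}, applied with this $\phi$, gives $|\phi|>\Sigma$. Since $\phi+\theta_n\in(0,\pi)$ and $\theta_n\in(0,\pi)$, we also have $-\pi<\phi<\pi$; in particular $-\phi\in(-\pi,\pi)$ and $\pi-\phi\in(0,2\pi)$.

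Finally I would split on the sign of $\phi$. Suppose $\phi>\Sigma$. Then for every $s\in[0,t]$,
$$-\phi<-\Sigma<-\alpha(s),$$
so $-\phi$ lies strictly to the left of $[-\alpha(s),\pi+\alpha(s)]$, and hence the left sidewall is disjoint from $\overline{\cal D}$. Using $0<\phi<\pi-\theta_n<\pi$ we also get
$$-\alpha(s)<0<\pi-\phi<\pi<\pi+\alpha(s),$$
so the right sidewall lies in $\cal D$. The case $\phi<-\Sigma$ is completely symmetric: $\pi-\phi>\pi+\Sigma>\pi+\alpha(s)$ shows the right sidewall misses $\overline{\cal D}$, while $\Sigma<-\phi<\pi$ places the left sidewall strictly inside $\cal D$.

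I do not anticipate a serious obstacle. The only subtle point is selecting the correct branch of $\phi$ so that $I$ is truly the length-$\pi$ positivity interval of $S$ containing $\theta_n$; after that, every needed inequality is a direct consequence of the engineered smallness of $\sup_s\alpha(s)$ relative to $\Sigma$.
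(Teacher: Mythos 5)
Your proof is correct and follows essentially the same route as the paper: extract $|\phi|>\Sigma$ from \eqref{SHIFT} and the contradiction hypothesis, then observe that the resulting horizontal shift of $I=[-\phi,\pi-\phi]$ exceeds the engineered bound $\sup_s\alpha(s)<10^{-100}\Sigma$, pushing one sidewall outside $\overline{\cal D}$ while $|I|=\pi$ forces the other to remain inside. The only difference is that you make explicit the case split on the sign of $\phi$ and the identification of $I$, which the paper handles more informally; the underlying argument is the same.
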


\begin{proof}
The properties of $S$ imply the following:
\begin{equation}
\bigg|\frac{\cos(\phi+\theta_0)}{\sin(\phi+\theta_0)}-
\frac{\cos(\theta_0)}{\sin(\theta_0)}\bigg| =
\bigg|\frac{\kappa_{\theta}(\theta_0,t)}{\kappa(\theta_0,t)}-
\frac{\cos(\theta_0)}{\sin(\theta_0)}\bigg|>\epsilon.
\end{equation}
By equation \ref{SHIFT}, we have $|\phi|>\Sigma$.

If we had $\phi=0$ we would have $I=[0,\pi]$.
As it is, we have
$|\phi|>\Sigma$.  This shifts $I$ and
$\Omega$ by at least $\Sigma$ to the left or to the right.
Given our bound on the horizontal displacement of
the sidewalls of $\cal D$,
this shift causes one sidewall or the other to stick out completely.
See Figure 4 below.  At least one point of $I$ lies in $(0,\pi)$ and
the total width of $I$ is $\pi$.  Hence $I$ cannot both contain
points less than $0$ and greater than $\pi$.  This means that
the other sidewall lies inside $\cal D$.
\end{proof}

We now create a new domain $\cal Q$  by intersecting
$\Omega$ with $\cal D$ and pushing in the
curvilinear sidewall a bit.  We treat
the case when $\Omega$ sticks out on the
left.  The other case is essentially the same.

\begin{center}
\resizebox{!}{1.7in}{\includegraphics{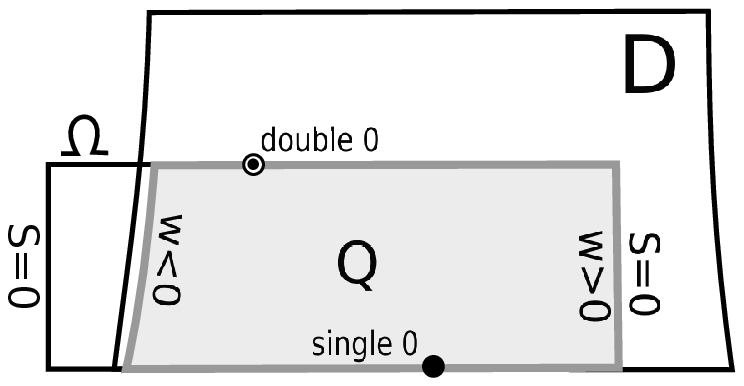}}
\newline
    {\bf Figure 4:\/} The new domain $\cal Q$, shaded.
\end{center}

Define
\begin{equation}
w(\theta,t) = \kappa(\theta,t)-S(\theta).
\end{equation}
The function $S$ is a stationary solution
to Equation \ref{MAIN}, meaning that $S_t=0$.
This means that $w$ is exactly the sort of
difference of solutions to which the
Sturmian Principle applies.
Let us examine the behavior of $w$ on the boundary of
$\cal Q$.
\newline
\newline
{\bf Left:\/} Since
$\kappa$ limits to $0$ on the sidewalls of $\cal D$
and $S>0$ on the left sidewall of $\cal D$, we can
by compactness
make the perturbation small enough so that $w<0$ on
the left sidewall of $\cal Q$.
\newline
\newline
{\bf Right:\/}
The right sidewall of $\cal Q$ lies in $\cal D$.
Since $S=0$ on the right sidewall  of $\cal Q$ and
$\kappa>0$ everywhere in $\cal D$, we have
$w>0$ on the right sidewall of $\cal Q$.
\newline
\newline
    {\bf Bottom:\/}
    Applying Corollary \ref{scaling2} to the
bottom side $J$ of $\cal Q$,
we see that $w(*,0)$ vanishes at most twice on $J$
counting multiplicity.
Since $w$ has opposite signs on
the sidewalls of $\cal Q$ the number of
zeros of $w$ on $J$ 
is odd, counting multiplicity. Since this
number is at most $2$, it must be exactly $1$.
In short, $w$ vanishes exactly once on
the bottom side of $\cal Q$, counting multiplicity.
\newline
\newline
    {\bf Top:\/}
On the top side $J'$ of $\cal Q$ we have arranged
that $w$ and $w_{\theta}$ vanish at $(\theta_0,t)$.
This means that $w$ vanishes at least twice,
counting multiplicity, on $J'$.
We have shown this double point in Figure 4.
Since $w$ has opposite signs on
the sidewalls of $\cal Q$ the number of
zeros of $w$ on $J'$
is odd, counting multiplicity. Since
this number is at least $2$ it is actually
at least $3$. In short,
$w$ vanishes at least $3$ times on the top side
of $\cal Q$ counting multiplicity.
\newline

The above properties violate the Sturmian
Principle for
(Equation \ref{MAIN}, $\cal Q$, $w$).
This completes the proof of the
Sine Lemma.

\subsection{The End of the Proof}

In this section we prove the Grim Reaper Theorem.

\begin{corollary} \label{uniform}
Let $\epsilon>0$ be given and
let $J \subset (0,\pi)$ be any closed interval. We have
$$\sup_{\theta \in J} \bigg| \frac{F_{\theta}(\theta,t)}{F(\theta,t)}-
\frac{\cos(\theta)}{\sin(\theta)}\bigg|<\epsilon,$$
for $t$ sufficiently close to $T$.
\end{corollary}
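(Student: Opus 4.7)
The plan is to observe that the corollary is an essentially immediate rewriting of the Sine Lemma. The key point is that the normalization in the definition of $F$ disappears when one forms the logarithmic derivative $F_\theta/F$.

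First I would compute: since $F(\theta,t) = \kappa(\theta,t)/\kappa(\pi/2,t)$ and the denominator $\kappa(\pi/2,t)$ depends only on $t$, differentiating in $\theta$ gives $F_\theta(\theta,t) = \kappa_\theta(\theta,t)/\kappa(\pi/2,t)$. Forming the ratio, the common factor $\kappa(\pi/2,t)$ cancels:
\[
\frac{F_\theta(\theta,t)}{F(\theta,t)} = \frac{\kappa_\theta(\theta,t)}{\kappa(\theta,t)}.
\]
This identity is valid for every $\theta$ in the domain where $F(\theta,t) > 0$, and in particular throughout $J \subset (0,\pi)$ once $t$ is close enough to $T$ (by the convexity of the lobes, $\kappa > 0$ on the open interval).

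Next I would apply the Sine Lemma directly. Given $\epsilon > 0$ and the closed interval $J \subset (0,\pi)$, the Sine Lemma provides a time $t_0 < T$ such that for all $t \in [t_0,T)$,
\[
\sup_{\theta \in J} \left| \frac{\kappa_\theta(\theta,t)}{\kappa(\theta,t)} - \frac{\cos(\theta)}{\sin(\theta)} \right| < \epsilon.
\]
Substituting the identity above yields the desired bound on $F_\theta/F - \cot(\theta)$.

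There is no real obstacle here: the entire content of the corollary is packaged in the Sine Lemma, and the corollary simply records the fact that the ratio $\kappa_\theta/\kappa$ is a \emph{scale-invariant} quantity, so it agrees with $F_\theta/F$ and does not see the (diverging) normalization $\kappa(\pi/2,t)$. This reformulation is what makes the statement useful in the next step, where one must pass from control of the logarithmic derivative of $F$ to control of $F$ itself compared with $\sin(\theta)$ (via the normalization $F(\pi/2,t) = 1$) in order to conclude the Grim Reaper Theorem.
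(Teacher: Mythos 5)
Your proof is correct and is essentially the paper's argument, just spelled out more explicitly: the paper's one-line proof (``we can replace $\kappa$ by $F$ because for each time these functions are constant multiples of each other'') is exactly the observation that $\kappa(\pi/2,t)$ is a constant in $\theta$ and therefore cancels when forming the logarithmic derivative $F_\theta/F = \kappa_\theta/\kappa$.
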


\begin{proof}
We can replace $\kappa$ by $F$ because for each time
these functions are constant multiples of each other.
\end{proof}

Consider the new function
\begin{equation}
G(\theta,t)=\frac{F(\theta,t)}{\sin(\theta)}.
\end{equation}
Using Lemma \ref{uniform} we have the following result:
\begin{equation}
|G_{\theta}| = \frac{|F_{\theta}(\theta,t)\sin(\theta)-F(\theta,t)\cos(\theta)|}{\sin^2(\theta)} <
\frac{\epsilon F(\theta,t) \sin(\theta)}{\sin^2(\theta)}=
\epsilon G,
\end{equation}
This holds for all $\theta \in J$ provided 
that we take $t$ sufficiently close to $T$.
The last calculation shows that the
logarithmic derivative $G_{\theta}/G$ is nearly $0$
on $J$.  Hence $G$ is nearly constant on $J$. But $G(\pi/2,t)=1$.
Hence $G$ is nearly $1$ on $J$.  This proves
that $F(\theta,t)$ converges uniformly to
$\sin(\theta)$ for $t \in J$.  But this combines
with Corollary \ref{uniform} to show that
$F_{\theta}(\theta,t)$ converges uniformly to
$\cos(\theta)$ for $t \in J$.  This completes the proof
of the Grim Reaper Theorem.

\newpage

 \section{Asymptotic Formulas}

  \subsection{The Y Bound}
  
  In this section we deduce the middle bound in
  Equation \ref{AS0} from the Grim Reaper Theorem,
  namely
  $$\liminf_{t \to T} Y(t) \kappa(\pi/2,t) \geq \pi/2.$$
  The key is to get a nice integral formula for this expression.
  
\begin{lemma}
  \begin{equation}
  \label{goal1}
  Y(t) \kappa(\pi/2,t) = \int_0^{\pi/2} \frac{\sin(\phi)}{F(\phi,t)}  d\phi.
\end{equation}
\end{lemma}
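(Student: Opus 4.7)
The identity is essentially a Fundamental-Theorem-of-Calculus reformulation along an arc of $C_+(t)$ parametrized by $\theta$.

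First I would identify the geometric meaning of the two endpoints $\theta = 0$ and $\theta = \pi/2$ on $C_+(t)$. The formulas $X_t(t) = -\kappa(\pi/2, t)$ and $Y_t(t) = -\kappa(0, t)$ used in the proof of Lemma \ref{bb0} show that $\theta = \pi/2$ corresponds to the rightmost point of $C_+(t)$ (where $y = 0$), while $\theta = 0$ corresponds to the topmost point (where $y = Y(t)$). Both lie on the top half of $C_+(t)$, whose $\theta$-range is $(-\alpha(t), \pi/2]$, and the sub-arc from topmost to rightmost is the natural domain of integration.

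Next, I would write down the standard relation
$$\frac{dy}{d\theta} = -\frac{\sin\theta}{\kappa(\theta, t)}$$
valid along this sub-arc. This comes from $dy/ds = \sin\theta_T$ together with $ds/d\theta = 1/\kappa$, where $\theta_T$ is the tangent direction angle; the sign reflects the paper's orientation convention for $\theta$, under which $y$ decreases from $Y(t)$ to $0$ as $\theta$ runs from $0$ to $\pi/2$.

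Applying the Fundamental Theorem of Calculus gives
$$-Y(t) = y(\pi/2) - y(0) = \int_0^{\pi/2} \frac{dy}{d\theta}\, d\theta = -\int_0^{\pi/2} \frac{\sin\theta}{\kappa(\theta, t)}\, d\theta,$$
and multiplying by $\kappa(\pi/2, t)$ and invoking the definition $F(\phi, t) = \kappa(\phi, t)/\kappa(\pi/2, t)$ yields the claim. The only real issue is being careful about the orientation conventions that fix the sign of $dy/d\theta$; once these are pinned down, the identity reduces to a one-line calculation, so I do not anticipate a substantive obstacle.
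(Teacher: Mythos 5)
Your argument is correct and coincides with the paper's proof: both pass from $dy$ to $ds$ via $dy = -\sin\theta\, ds$, change to the $\theta$ variable via $\kappa\, ds = d\theta$, integrate over $\theta \in [0,\pi/2]$ (topmost point to rightmost point), and then divide by $\kappa(\pi/2,t)$ to introduce $F$. The sign bookkeeping you flag is exactly what the paper addresses with its parenthetical remark about $y$ decreasing as $s$ increases.
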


\begin{proof}
  Let $s_0$ and $s_1$ respectively denote the arc-length
 parameters that correspond to $\theta_0=0$ and $\theta_1=\pi/2$.
On the level of $1$-forms:
$$dy = -ds \sin \theta, \hskip 30 pt \kappa(\theta,t) ds=d\theta.$$
(The minus sign appears because $y$ decreases as $s$ increases.)
\begin{equation}
  \label{new1}
Y(t)=\int_0^{Y(t)} dy =
-\int_{s_1}^{s_0} \sin(\theta) ds =\int_{s_0}^{s_1} \sin(\theta) ds =
\int_0^{\pi/2} \frac{\sin(\theta)}{\kappa(\theta,t)}d\theta.
\end{equation}
Multiplying through by $\kappa(\pi/2,t)$, we get
Equation \ref{goal1}.
\end{proof}

Letting $\delta>0$ be arbitrary, we have
\begin{equation}
  \label{split}
Y(t) \kappa(\pi/2,t)=\int_0^{\delta} \frac{\sin(\phi)}{F(\phi,t)}
d\phi
+\int_{\delta}^{\pi/2} \frac{\sin(\phi)}{F(\phi,t)}  d\phi >\int_{\delta}^{\pi/2} \frac{\sin(\phi)}{F(\phi,t)}
d\phi
\end{equation}
By the Grim Reaper Theorem, the integrand in the last integral tends
to $1$ as $t \to T$.  Hence the
right hand side is at least $\pi/2-2\delta$ once $t$ is sufficiently
close to $T$.  This establishes our bound.
\newline
\newline
{\bf Remark:\/}  The Y bound in Equation \ref{AS0} is weaker
than the Y bound in Equation \ref{AS5} and one might wonder
about a direct proof of the stronger result.
It is difficult to conclude directly that
the first integral in Equation \ref{split} converges to $0$ as $\delta
\to 0$ because the integrand
could potentially blow up near $\theta=0$.  The issue is that in the Grim Reaper Theorem
we only get convergence on the open interval
$(0,\pi)$. Our indirect argument for the bound in Equation \ref{AS5},
which uses convexity and all the inequalities in Equation \ref{AS0}
together, avoids this difficulty.

\subsection{The X Bound}

The rest of the chapter is devoted to proving
the third bound in Equation \ref{AS0}. A similar 
asymptotic result is proven in \cite{ANG4},
albeit for everywhere locally convex curves.

Define
\begin{equation}
  \beta(t):=\frac{X(t)}{(T-t)\kappa(\pi/2,t)}>0.
\end{equation}
It suffices to show that $\beta(t)>2$ for $t$ sufficiently close to $T$.

Our argument in this section gives a clear reason why this
should be the case, but there is one technical detail
which takes a rather long time to prove.  Here we give the
main argument.

Define
  \begin{equation}
    \ell(t):=\log(X(t)) - \frac{1}{2} \log(T-t).
  \end{equation}

  \begin{lemma}
    \label{step1}
    $\ell_t(t)>0$ if and only if $\beta(t)>2$.
  \end{lemma}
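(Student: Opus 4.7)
The plan is a direct calculation. I would differentiate $\ell(t)$ with respect to $t$ and rewrite the resulting inequality in terms of $\beta(t)$.

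First I would recall the identity $X_t(t)=-\kappa(\pi/2,t)$, which was already observed in the proof of Lemma \ref{bb0} (the rightmost point of $C(t)$ has a vertical tangent, so it moves horizontally at speed equal to the curvature there). Differentiating the definition of $\ell$ then gives
\begin{equation*}
\ell_t(t) = \frac{X_t(t)}{X(t)} - \frac{1}{2}\cdot\frac{-1}{T-t} = -\frac{\kappa(\pi/2,t)}{X(t)} + \frac{1}{2(T-t)}.
\end{equation*}

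Next I would observe that since $X(t)>0$ and $T-t>0$, the inequality $\ell_t(t)>0$ is equivalent to
\begin{equation*}
\frac{1}{2(T-t)} > \frac{\kappa(\pi/2,t)}{X(t)},
\end{equation*}
which, after clearing denominators (all quantities are positive), becomes $X(t) > 2(T-t)\kappa(\pi/2,t)$. Dividing through by $(T-t)\kappa(\pi/2,t)$ gives precisely $\beta(t)>2$, as required. There is no genuine obstacle here: the only input beyond algebra is the elementary formula $X_t=-\kappa(\pi/2,t)$, so the whole lemma is just a rewriting of the definition of $\beta$ in terms of the logarithmic derivative $\ell_t$. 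The real work, as the authors indicate, will be in the subsequent argument that actually shows $\ell_t(t)>0$ (or equivalently $\beta(t)>2$) for $t$ close to $T$.
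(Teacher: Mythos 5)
Your proof is correct and is essentially identical to the paper's: both differentiate $\ell$, substitute $X_t(t)=-\kappa(\pi/2,t)$, and rearrange into the inequality defining $\beta(t)>2$. The paper just factors out $\frac{1}{T-t}$ to display $2\ell_t(t)=\frac{1}{T-t}\bigl(1-\frac{2}{\beta(t)}\bigr)$ rather than clearing denominators, but this is a cosmetic difference.
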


  \begin{proof}
  This is just a calculation.
  We have $X_t(t)=-\kappa(\pi/2,t)$.  Therefore,
  \begin{equation}
    2\ell_t(t)=\frac{2X_t(t)}{X(t)} + \frac{1}{T-t} =
    -\frac{2\kappa(\pi/2,t)}{X(t)} + \frac{1}{T-t}=
    \frac{1}{T-t} \times \bigg(1-\frac{2}{\beta(t)} \bigg).
    \end{equation}
    Hence $\ell_t(t)>0$ if and only if $\beta(t)>2$.
    \end{proof}

    \begin{lemma}
      \label{grow}
      $\lim_{t \to T} \ell(t)=+\infty$.
    \end{lemma}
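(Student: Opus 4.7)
The plan is to rewrite $\ell(t) = \log\bigl(X(t)/\sqrt{T-t}\bigr)$, so that proving $\ell(t)\to +\infty$ is equivalent to showing $X(t)^2/(T-t)\to +\infty$ as $t\to T$. Geometrically this says that the rescaled curve $D(t)=C(t)/\sqrt{T-t}$, whose enclosed area converges to $2\pi$ by Corollary \ref{AA}, has bounding box whose width $2X(t)/\sqrt{T-t}$ blows up. This is plausible because Lemma \ref{bb0} already forces the bounding box of $D(t)$ to become arbitrarily elongated, so a bounded width together with a bounded-below area would be incompatible.

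To turn this into a proof, I would first bound the enclosed area from above by the area of the bounding box. Each lobe of $C(t)$ is convex, and by the $4$-fold dihedral symmetry the right lobe is contained in the rectangle $[0,X(t)]\times[-Y(t),Y(t)]$ of area $2X(t)Y(t)$. Hence $A(t)\leq 4X(t)Y(t)$, which rearranges to
\[
\frac{X(t)^2}{T-t} \;\geq\; \frac{A(t)}{4(T-t)}\cdot\frac{X(t)}{Y(t)}.
\]
By Corollary \ref{AA}, the first factor on the right tends to $\pi/2$ as $t\to T$, and by Lemma \ref{bb0}, the second factor tends to $+\infty$. Therefore $X(t)^2/(T-t)\to +\infty$, and consequently $\ell(t)\to +\infty$.

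All the ingredients are already in place, so I do not expect any substantive obstacle. The one small point worth checking carefully is that the right lobe really sits inside the right half of the bounding box; this follows from convexity together with the fact that the two lobes meet only at the origin, which in turn comes from the dihedral symmetry about the coordinate axes. Unlike the subtler issues lurking behind the $Y$ bound, or the $X$ bound that this lemma feeds into, Lemma \ref{grow} should be essentially a one-line combination of the first formula in Equation \ref{AS0} with the Bounding Box Lemma.
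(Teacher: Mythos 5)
Your proof is correct and follows essentially the same approach as the paper: both rescale by $\sqrt{T-t}$, invoke Corollary \ref{AA} for the area asymptotics and Lemma \ref{bb0} for the degenerating aspect ratio, and conclude that the rescaled width must blow up. The only difference is that you spell out the elementary inequality $A(t) \leq 4X(t)Y(t)$ that the paper leaves implicit in passing from the area and aspect-ratio limits to the conclusion.
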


    \begin{proof}
      This is equivalent to the statement that
$$
  \lim_{t \to T} \frac{X(t)}{\sqrt{T-t}} \to \infty.
$$
Consider the rescaled curve
$C(t)/\sqrt{T-t}$.  The area of this
curve converges to $2\pi$ as $t \to T$
and the aspect ratio converges to $0$.
Hence the rightmost point, namely
$X(t)/\sqrt{T-t}$, converges to $\infty$.
\end{proof}

Since $\ell(t) \to \infty$ as $t \to T$, there is a
sense in which $\ell_t(t)>0$ much more often than
$\ell_t(t) \leq 0$.  However, we don't know
{\it a priori\/} that the sign does not switch
infinitely often as $t \to T$. This is the technical detail.
The rest of the chapter is devoted to showing
that $\ell_t$ changes sign
at most finitely many times as $t \to T$.
This combines with Lemma \ref{grow} to show
that $\ell_t(t)>0$ once $t$ is sufficiently close to $T$.
Lemma \ref{step1} then tells us that $\beta(t)>2$
for $t$ sufficiently close to $T$.

\subsection{The Support Function}

As a prelude to showing that $\ell_t$ changes sign finitely
many times, we discuss some of the geometry of the
curve $C(t)$.

We introduce the {\it support function\/}
\begin{equation}
  p(\theta,t) = C(\theta,t) \cdot {\bf n\/}(\theta), \hskip 30
  pt
  {\bf n\/}(\theta)=(\sin(\theta),\cos(\theta)).
\end{equation}

The normal vector $\textbf{n}$ is the same one as in
Lemma \ref{nega} above.  Again, this vector is independent of time.

\begin{lemma}
  \begin{equation}
    \label{support}
  C(\theta,t) = p(\theta,t) {\bf n\/}(\theta) +
  p_{\theta}(\theta,t) {\bf n\/}_{\theta}(\theta).
  \end{equation}
  Moreover ${\bf n\/}$ is the outward normal vector field with respect
  to
  $C(t)$.
\end{lemma}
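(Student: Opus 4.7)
The plan is to verify the two claims in sequence, starting with the outward-normal assertion (which is essentially a consistency check against the paper's conventions) and then deriving the decomposition formula as a Frenet-style computation.

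For the outward-normal claim, I would first note that $\theta$ is set up so that at each parameter value the tangent line at $C(\theta,t)$ has a specific prescribed direction, with $\theta = 0$ at the topmost point and $\theta = \pi/2$ at the rightmost point of $C_+(t)$ (consistent with the formulas $Y_t(t) = -\kappa(0,t)$ and $X_t(t) = -\kappa(\pi/2,t)$). Direct inspection at these reference values gives $\mathbf{n}(0) = (0,1)$ (outward at the top) and $\mathbf{n}(\pi/2) = (1,0)$ (outward at the right), so $\mathbf{n}(\theta)$ agrees with the outward unit normal at these points. Since $\mathbf{n}$ depends continuously on $\theta$, and the sign cannot switch without $\mathbf{n}$ passing through a non-normal direction at some intermediate $\theta$, the vector field $\mathbf{n}$ must be the outward normal everywhere.

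With the outward-normal claim established, the decomposition formula follows from orthonormal-frame bookkeeping. A direct calculation gives $\mathbf{n}_\theta(\theta) = (\cos\theta, -\sin\theta)$, and $\{\mathbf{n}, \mathbf{n}_\theta\}$ is an orthonormal basis of $\mathbb{R}^2$. Decomposing $C(\theta,t)$ in this frame yields
$$C(\theta, t) \;=\; (C\cdot \mathbf{n})\,\mathbf{n}(\theta) \;+\; (C\cdot \mathbf{n}_\theta)\,\mathbf{n}_\theta(\theta) \;=\; p(\theta,t)\,\mathbf{n}(\theta) \;+\; (C\cdot \mathbf{n}_\theta)\,\mathbf{n}_\theta(\theta),$$
so Equation \ref{support} reduces to the identity $C\cdot \mathbf{n}_\theta = p_\theta$. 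I would prove this by differentiating the definition $p = C\cdot \mathbf{n}$ in $\theta$: this gives $p_\theta = C_\theta\cdot \mathbf{n} + C\cdot \mathbf{n}_\theta$, reducing the problem to $C_\theta\cdot \mathbf{n}(\theta) = 0$. But $\theta$ parametrizes the curve $C(t)$, so $\partial C/\partial \theta$ is a tangent vector to $C(t)$ at $C(\theta, t)$, and by the outward-normal statement $\mathbf{n}(\theta)$ is perpendicular to the tangent there, yielding the desired identity.

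The main obstacle here is not analytic but conventional — pinning down that $\mathbf{n}(\theta) = (\sin\theta, \cos\theta)$ is the outward rather than the inward normal, which requires a careful sign check against the paper's parametrization. Once that is settled, everything else is textbook support-function bookkeeping, with the single nontrivial step being the observation that $C_\theta \perp \mathbf{n}$, which is immediate from $C_\theta$ being tangent to $C(t)$.
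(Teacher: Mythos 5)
Your proof is correct and follows essentially the same route as the paper: both decompose $C$ in the orthonormal frame $\{\mathbf{n},\mathbf{n}_\theta\}$ and pin down the $\mathbf{n}_\theta$-component via the fact that $C_\theta \perp \mathbf{n}$. The only cosmetic difference is that the paper differentiates the ansatz $C = p\,\mathbf{n} + q\,\mathbf{n}_\theta$ and reads off $q = p_\theta$ from the $\mathbf{n}$-coefficient of $C_\theta$, whereas you differentiate the scalar $p = C\cdot\mathbf{n}$ directly; these are the same computation packaged differently. You also spell out the sign check for the outward-normal claim (against the reference values $\theta=0$ and $\theta=\pi/2$ and continuity), which the paper states without argument — a reasonable addition, not a divergence.
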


\begin{proof}
  This is a classic result.  Since
  ${\bf n\/}$ and ${\bf n\/}_{\theta}$ form
  an orthonormal basis there are functions
  $p(\theta,t)$ and $q(\theta,t)$ such that
  $$C(\theta,t)= p(\theta,t) {\bf n\/}(\theta) + q(\theta,t) {\bf n\/}_{\theta}(\theta).$$
  Note that ${\bf n\/}_{\theta \theta} = -{\bf n\/}$.   Thus, when we differentiate with
  respect to $\theta$, we get
  $$C_{\theta} = (p_{\theta}-q) {\bf n\/} + (p+q_{\theta}) {\bf n\/}_{\theta}.$$
  Since $C_{\theta} \perp {\bf n\/}$ we have
  $$0=C_{\theta} \cdot {\bf n\/} = p_{\theta}-q.$$
  Hence $q=p_{\theta}.$
\end{proof}

\subsection{The Parabolic Rescaling}

The method here is an adaptation of an idea in
Angenent's paper \cite{ANG3}.   See also the paper
\cite{MA} by H. Matano.
We introduce another new variable $\tau$, which is related to $t$ as follows:
\begin{equation}
  \label{TIME}
  \tau = \log \frac{1}{T-t}, \hskip 30 pt
  t=T-e^{-\tau}.
\end{equation}
Note that $\tau \to +\infty$ corresponds to $t \to T$.

We introduce the parabolic rescaling:
\begin{equation}
  D(\theta,\tau)=e^{\tau/2}C(\theta,T-e^{-\tau}).
\end{equation}
Up to changing the time parametrization, this is the
same curve considered in connection with Lemma \ref{nega}.
Next, we introduce the {\it node function\/}
\begin{equation}
  \nu(\theta,\tau) = D_{\tau}(\theta,\tau) \cdot {\bf n\/}(\theta).
\end{equation}
This quantity measures the component of the velocity
of the curve $\tau \to D(\theta,\tau)$ in the normal direction.
Angenent calls points where $\nu(\theta,\tau)=0$ {\it nodes\/}
and proves results about how the number of such is
monotone non-increasing with time.  We take the same approach.

\begin{lemma}
  \label{node}
  For corresponding times $t,\tau$, we have
  $\ell_t(t)=0$ iff
  $\nu(\pi/2,\tau)=0$.
\end{lemma}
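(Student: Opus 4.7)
The plan is to identify $\nu(\pi/2,\tau)$ with a strictly positive multiple of $\ell_t(t)$ by a direct chain-rule calculation, after which the biconditional is immediate. The first step is to evaluate the geometry at $\theta=\pi/2$. Since $\mathbf{n}(\pi/2)=(\sin(\pi/2),\cos(\pi/2))=(1,0)$, the component $D_\tau\cdot\mathbf{n}$ at $\theta=\pi/2$ is simply the $\tau$-derivative of the $x$-coordinate of $D(\pi/2,\tau)$. By $4$-fold dihedral symmetry together with the convexity of the lobes, the point of $C(t)$ whose outward normal is $(1,0)$, i.e.\ the point with tangent angle $\pi/2$, is the rightmost point on the $x$-axis, namely $C(\pi/2,t)=(X(t),0)$. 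Hence $D(\pi/2,\tau)\cdot\mathbf{n}(\pi/2)=e^{\tau/2}X(t)$, where $t=T-e^{-\tau}$.

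Next I differentiate in $\tau$. Because $\mathbf{n}(\pi/2)$ is independent of $\tau$,
\begin{equation*}
\nu(\pi/2,\tau)=\frac{\partial}{\partial\tau}\bigl[e^{\tau/2}X(T-e^{-\tau})\bigr]
=e^{\tau/2}\Bigl[\tfrac{1}{2}X(t)+(T-t)X_t(t)\Bigr],
\end{equation*}
using $dt/d\tau=e^{-\tau}=T-t$.

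Finally I compare with $\ell_t(t)$. A direct differentiation of $\ell(t)=\log X(t)-\tfrac{1}{2}\log(T-t)$ gives
\begin{equation*}
\ell_t(t)=\frac{X_t(t)}{X(t)}+\frac{1}{2(T-t)},\qquad\text{so}\qquad
X(t)(T-t)\,\ell_t(t)=(T-t)X_t(t)+\tfrac{1}{2}X(t).
\end{equation*}
Combining the two displays yields the identity
\begin{equation*}
\nu(\pi/2,\tau)=e^{\tau/2}\,X(t)\,(T-t)\,\ell_t(t).
\end{equation*}
Since each of $e^{\tau/2}$, $X(t)$, and $T-t$ is strictly positive for $t<T$, it follows that $\nu(\pi/2,\tau)=0$ if and only if $\ell_t(t)=0$, which is the desired equivalence. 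There is no real obstacle here; the only thing to be careful about is the initial geometric identification $C(\pi/2,t)=(X(t),0)$, which uses both the symmetry hypothesis and convexity to ensure that the tangent-angle parametrization at $\theta=\pi/2$ locates precisely the rightmost point of the curve.
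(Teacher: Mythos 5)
Your proof is correct and follows essentially the same route as the paper: both identify $\nu(\pi/2,\tau)$ with the $\tau$-derivative of the rescaled rightmost coordinate $e^{\tau/2}X(t)$, then relate this to $\ell_t$ via the diffeomorphism $t\leftrightarrow\tau$. The paper argues qualitatively through the support function $P$ and the monotonicity of the logarithm, while you carry the chain-rule computation to completion and obtain the explicit identity $\nu(\pi/2,\tau)=e^{\tau/2}X(t)(T-t)\,\ell_t(t)$, which makes the equivalence immediate; the difference is one of bookkeeping, not of substance.
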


\begin{proof}
  We have already mentioned that
  $D(\tau)$ is the same curve as
$C(t)/\sqrt{T-t}$.  
The support function for $D$, namely
the function $P$ considered in Lemma \ref{nega}, is
\begin{equation}
  P(\theta,\tau)=e^{\tau/2} p(\theta,t), \hskip 20 pt
  t=T-e^{-\tau}.
\end{equation}
  The time derivative $P_{\tau}(\pi/2,\tau)$ describes the
  velocity of the point $D(\pi/2,\tau)$. This is zero
  if and only if the velocity of the point
  $$\frac{1}{\sqrt{T-t}} C(\pi/2,t)=\frac{X(t)}{\sqrt{T-t}}$$
  is zero, because $t$ and $\tau$ are related by a diffeomorphism.

  In short, $\ell_t(t)=0$ if and only of
  $P_{\tau}(\pi/2,\tau)=0$.
  Finally, we observe that
  $\nu(\pi/2,\tau)=P_\tau(\pi/2,\tau)$.
\end{proof}

\subsection{Finitely Many Sign Changes}

There are two things that we need to know about
the node function
$\nu$.   We let $K=K(\theta,\tau)$ be the curvature of
$D(\theta,\tau)$. Then:
\begin{equation}
  \label{NU}
 \nu= \frac{P}{2}-K.
\end{equation}

\begin{equation}
  \label{NODAL}
  \nu_\tau=K^2 \nu_{\theta\theta}+(K^2+1/2) \nu.
\end{equation}
We will derive these in the next section.
Equation \ref{NODAL} is a strictly
parabolic equation in the sense of
Equation \ref{GPDE}.

If follows from Lemma \ref{nega} and
Equation \ref{NU}
that $\nu(\theta,\tau)<0$ once $\tau$ is large and
$\theta$ is sufficiently close to $-\alpha(t)$.  Here
$t$ and $\tau$ are corresponding times.
By symmetry, $\nu(\theta,\tau)<0$ when
$\theta$ is sufficiently close to $\pi+\alpha(t)$.
In short, $\nu$ is negative near the boundary of
the spacetime domain on which it is defined.

Since $D$ is analytic, and $\nu$ is negative near the boundary of
the domain, there is a finite number $N(\tau)$ of
points where $\nu(*,\tau)$ vanishes.
By the Sturmian Principle, applied to domains whose
vertical sides are contained entirely in the regions near the boundary
where $\nu<0$, we see that $N(\tau)$ is non-increasing with time and
$N(\tau)$ drops by at least $2$ at any time $\tau$
  where the function $\nu(*,\tau)$ vanishes to at least second order
  at some point.
The function
$\nu(*,\tau)$ is invariant with respect to the
reflection $\theta \to \pi-\theta$.
Hence if this function vanishes at $\pi/2$, it vanishes
to at least second order.  This means that $N(\tau)$ drops
whenever $\nu(\pi/2,\tau)=0$.  Hence this can happen
at most finitely many times.  Lemma \ref{node} now tells us that
$\ell_t(t)$ can vanish at most finitely many times as
$t \to T$.

\subsection{Derivations}

In this section we derive Equations \ref{NU} and \ref{NODAL}.
We need to compute some auxiliary quantities along the way.

\begin{lemma}
  \begin{equation}
    P_{\tau}=\frac{P}{2}-K.
  \end{equation}
\end{lemma}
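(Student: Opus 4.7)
The plan is to reduce the claim to the classical identity $p_t = -\kappa$ for the support function under curve shortening flow, and then translate this through the parabolic rescaling. No new flow estimate is needed; once that identity is in hand, the proof is purely a chain-rule computation.

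First I would establish $p_t = -\kappa$. Differentiating the definition $p(\theta,t) = C(\theta,t)\cdot \mathbf{n}(\theta)$ in $t$ at fixed $\theta$, and using that $\mathbf{n}(\theta)$ is independent of $t$, gives $p_t = C_t \cdot \mathbf{n}$. By the preceding lemma, $\mathbf{n}$ is the outward unit normal along $C(t)$, and the CSF equation $C_t = kN$ specialises on our convex arcs (where $\kappa>0$) to inward motion at speed $\kappa$, so $C_t = -\kappa\,\mathbf{n}$. Dotting with $\mathbf{n}$ yields $p_t = -\kappa$.

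Next I would chain-rule through the rescaling $P(\theta,\tau) = e^{\tau/2} p(\theta, T-e^{-\tau})$. Since $dt/d\tau = e^{-\tau}$,
$$P_\tau = \tfrac{1}{2} e^{\tau/2} p + e^{\tau/2}\, p_t\, e^{-\tau} = \tfrac{P}{2} - e^{-\tau/2}\kappa.$$
Finally, the curve $D(\theta,\tau) = e^{\tau/2} C(\theta,T-e^{-\tau})$ is a Euclidean dilation of $C(t)$ by the factor $e^{\tau/2}$, and curvature scales inversely under dilations, so $K = e^{-\tau/2}\kappa$. Substituting produces the desired identity $P_\tau = P/2 - K$.

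There is no real obstacle; the entire argument is bookkeeping. The one point requiring care is the sign in the first step: one must confirm that the signed curvature appearing naturally in the support function formalism agrees with the $\kappa$ defined via tangent angle in the paper, so that one obtains $p_t=-\kappa$ rather than $+\kappa$. Once the CSF direction (inward on convex arcs) is matched to the outward-pointing $\mathbf{n}$ supplied by the previous lemma, the formula falls out in two lines.
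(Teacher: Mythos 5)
Your proposal is correct and follows the same two-step path as the paper: first derive $p_t=-\kappa$ from the flow equation and the time-independence of $\mathbf n$, then apply the product and chain rule to $P=e^{\tau/2}p$ and identify $e^{-\tau/2}\kappa$ with $K$. The extra remarks about matching sign conventions are a harmless elaboration of what the paper handles implicitly.
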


\begin{proof}
   Using the fact that ${\bf n\/}$ does not depend on time, and is the
   outward normal, we compute
   \begin{equation}
  p_t=\frac{d}{dt} \bigg(C(t) \cdot {\bf n\/}\bigg) = -\kappa {\bf n\/} \cdot
  {\bf n\/}=-\kappa.
  \end{equation}
Now we set $p=p(T-e^{-\tau})$ and use the product and chain rule to compute
$$P_{\tau}=\frac{d}{d\tau}\bigg(e^{\tau/2}
p\bigg)=(1/2) e^{\tau/2}p-e^{-\tau} e^{\tau/2} \kappa=\frac{P}{2}-e^{-\tau/2} \kappa=
\frac{P}{2}-K.$$
This does it.
\end{proof}

\begin{lemma}[Equation \ref{NU}]
$$\nu= \frac{P}{2}-K.$$
\end{lemma}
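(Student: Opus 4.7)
The plan is to combine the identity $P_\tau = \tfrac{P}{2} - K$ just established with the definitional observation that $P_\tau$ is literally the node function $\nu$. Concretely, I would start from the definition $P(\theta,\tau) = D(\theta,\tau) \cdot \mathbf{n}(\theta)$ and differentiate in $\tau$. The key point is that the unit vector field $\mathbf{n}(\theta) = (\sin\theta,\cos\theta)$ depends only on $\theta$ and is independent of $\tau$, exactly as emphasized in the statement accompanying Lemma \ref{nega} and in the preceding lemma. Therefore the product rule collapses to a single term:
\begin{equation*}
P_\tau(\theta,\tau) \;=\; D_\tau(\theta,\tau) \cdot \mathbf{n}(\theta) + D(\theta,\tau) \cdot \mathbf{n}_\tau(\theta) \;=\; D_\tau(\theta,\tau) \cdot \mathbf{n}(\theta).
\end{equation*}

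The right-hand side is precisely the definition of $\nu(\theta,\tau)$. Thus $\nu = P_\tau$. Plugging this into the previous lemma gives $\nu = P_\tau = \tfrac{P}{2} - K$, which is Equation \ref{NU}.

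There is essentially no obstacle here beyond bookkeeping: the content of the lemma is split across the two preceding calculations (the support-function formula and the computation of $P_\tau$), and the only substantive fact being used is the time-independence of $\mathbf{n}$, which is a property of our choice to parametrize by tangent angle. So the write-up is a one-line derivation, and the only thing to be careful about is to remind the reader why $\mathbf{n}_\tau = 0$ so that the product rule does not produce an extra term involving how the angle parametrization rescales.
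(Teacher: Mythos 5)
Your proof is correct. The key step in both arguments is the identity $\nu = P_\tau$, and both then invoke the previously established $P_\tau = \tfrac{P}{2} - K$. Where you differ from the paper is in how you obtain $\nu = P_\tau$: you differentiate the scalar identity $P = D \cdot \mathbf{n}$ directly in $\tau$ and use $\mathbf{n}_\tau = 0$, whereas the paper instead differentiates the vector decomposition $D = P\,\mathbf{n} + P_\theta\,\mathbf{n}_\theta$ (Equation \ref{SUPPORT}) to get $D_\tau = P_\tau\,\mathbf{n} + P_{\theta\tau}\,\mathbf{n}_\theta$, then takes the dot product with $\mathbf{n}$, using $\mathbf{n}\cdot\mathbf{n}=1$ and $\mathbf{n}_\theta\cdot\mathbf{n}=0$. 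Your route is marginally more economical because it bypasses the support-function expansion of $D$ entirely and only needs the definitions of $P$ and $\nu$ plus the time-independence of $\mathbf{n}$; the paper's route has the small advantage of reusing a decomposition it has already stated and will rely on elsewhere. Both are valid, and both hinge on exactly the same substantive fact, namely that parametrizing by tangent angle makes $\mathbf{n}$ (and $\mathbf{n}_\theta$) independent of $\tau$.
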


\begin{proof}
  Suppressing the arguments, we have
  \begin{equation}
    \label{SUPPORT}
    D=P {\bf n\/}+P_{\theta} {\bf n\/}_{\theta}.
  \end{equation}
  Hence
$$\nu=D_{\tau} \cdot {\bf n\/}=(P_{\tau} {\bf n\/} + P_{\theta\tau} {\bf n\/}_{\theta}) \cdot
{\bf n\/} = P_{\tau}=
\frac{P}{2}-K.$$
This does it.
\end{proof}

\begin{lemma}
  \begin{equation}
  \label{SUPP2}
  P+P_{\theta \theta}=\frac{1}{K}.
\end{equation}
\end{lemma}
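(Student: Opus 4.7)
The plan is to derive the identity by differentiating the support-function representation \eqref{SUPPORT} with respect to $\theta$ and then matching the result against the basic relation between arc length and tangent angle. This is the standard argument by which the quantity $p + p_{\theta\theta}$ is identified with the radius of curvature for a smooth convex curve; I will carry it out in the rescaled coordinates used here.

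First, I differentiate $D = P\mathbf{n} + P_{\theta}\mathbf{n}_{\theta}$ with respect to $\theta$, treating $\tau$ as fixed. Using the product rule and the identity $\mathbf{n}_{\theta\theta} = -\mathbf{n}$ (immediate from $\mathbf{n}(\theta) = (\sin\theta,\cos\theta)$, noting that the $\mathbf n$ here is an outward unit normal rotating with $\theta$), the $\mathbf{n}$-components cancel and I obtain
\begin{equation*}
D_{\theta} = (P + P_{\theta\theta})\, \mathbf{n}_{\theta}.
\end{equation*}
Since $\mathbf{n}_{\theta}$ is a unit vector (and is in fact the unit tangent to $D$ at parameter $\theta$, oriented consistently with the parametrization), taking norms gives $|D_{\theta}| = |P + P_{\theta\theta}|$.

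Next, I relate $|D_{\theta}|$ to the curvature $K$. By definition of the tangent-angle parametrization, if $s$ denotes arc length along $D(\tau)$ then $K\, ds = d\theta$, so $ds/d\theta = 1/K$, and hence $|D_{\theta}| = 1/K$. Combining this with the previous display yields $|P + P_{\theta\theta}| = 1/K$. Convexity of $D(\tau)$ (inherited from convexity of the lobes of $C(t)$) forces the radius of curvature $1/K$ to equal the positive quantity $P + P_{\theta\theta}$ itself, which gives
\begin{equation*}
P + P_{\theta\theta} = \frac{1}{K}
\end{equation*}
as desired.

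There is really no serious obstacle here; the one point to be a little careful about is the sign, since a priori the identity $|D_\theta| = |P + P_{\theta\theta}|$ only determines $P + P_{\theta\theta}$ up to sign. The cleanest justification is convexity: on a smooth convex arc parametrized by outward tangent angle, $ds/d\theta > 0$, so $D_\theta$ and $\mathbf n_\theta$ point the same way and the scalar $P + P_{\theta\theta}$ is positive, matching the positivity of $1/K$.
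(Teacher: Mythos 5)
Your proof is correct and takes a cleaner route than the paper. The paper's proof invokes the signed-curvature formula $K = \pm D_{\theta} \times D_{\theta\theta}/\|D_{\theta}\|^3$, substitutes the support-function parametrization of $D$ into it (which requires computing $D_{\theta\theta}$ as well), obtains the identity up to sign, and then fixes the sign by checking the special case of the unit circle traversed clockwise (and the lemma is also attributed to Equation 2.3 of \cite{ANG4}). Your argument instead differentiates $D = P\mathbf{n} + P_{\theta}\mathbf{n}_{\theta}$ only once, getting $D_\theta = (P+P_{\theta\theta})\mathbf{n}_\theta$, and then reads off $|D_\theta| = ds/d\theta = 1/K$ directly from the definition of curvature in the tangent-angle parametrization; this is more elementary in that it avoids the second derivative and the cross product entirely. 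Both approaches leave the same residual sign ambiguity, and both resolve it by the same underlying observation: with $\mathbf{n}(\theta)=(\sin\theta,\cos\theta)$ as outward normal, increasing $\theta$ traverses the lobe clockwise, for which the forward unit tangent is $R_{-90^\circ}\mathbf{n} = \mathbf{n}_\theta$, so $D_\theta$ is a \emph{positive} multiple of $\mathbf{n}_\theta$. One small nit on the wording of your sign argument: the inequality $ds/d\theta>0$ only shows $P+P_{\theta\theta}\neq 0$; the positivity is really forced by identifying $\mathbf{n}_\theta$ (rather than $-\mathbf{n}_\theta$) as the forward tangent for the clockwise orientation, which you assert parenthetically but could state as the explicit reason. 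This is the same level of informality as the paper's ``check the unit circle'' step, so the proof stands.
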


\begin{proof}
  Up to a different sign convention, this is
  Equation 2.3 in \cite{ANG4}.
  Here is the formula for the signed curvature of a
  parametrized plane curve.
\begin{equation}
  \label{CURV}
K =  \pm \frac{D_{\theta} \times
  D_{\theta\theta}}{\|D_{\theta}\|^3}
\end{equation}
The ambiguity in the sign comes from the fact that
$K$ is always taken to be positive.
Using this equation for the parametrization
given in Equation \ref{SUPPORT},
we get Equation \ref{SUPP2} up to sign.
  To get the sign in Equation \ref{SUPP2} we note that the
sign
is correct in the special case $D$ is the unit circle, parametrized in a clockwise
way.  But then, since we are parametrizing $D$ in a clockwise way,
the sign is correct in the arbitrary case.
\end{proof}

\begin{lemma}
  \begin{equation}
    \label{ANG}
    K_{\tau}=-\frac{K}{2} + K^2K_{\theta \theta} + K^3.
  \end{equation}
\end{lemma}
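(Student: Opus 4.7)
The plan is to derive Equation \ref{ANG} by a direct chain-rule computation, converting the already-known evolution equation $\kappa_t=\kappa^2(\kappa+\kappa_{\theta\theta})$ (Equation \ref{MAIN}) from the variables $(\theta,t)$ into the parabolically rescaled variables $(\theta,\tau)$ determined by Equation \ref{TIME} and $K(\theta,\tau)=e^{-\tau/2}\kappa(\theta,t)$. There is no PDE-theoretic ingenuity required here: everything is dictated by the definitions.

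First I would record the basic dictionary between the two time variables. Since $\tau=\log(1/(T-t))$, we have $dt/d\tau = e^{-\tau} = T-t$, and $\theta$ is common to both parametrizations, so $K_{\theta\theta}(\theta,\tau) = e^{-\tau/2}\kappa_{\theta\theta}(\theta,t)$ at corresponding times. Next I would differentiate $K=e^{-\tau/2}\kappa(\theta,T-e^{-\tau})$ with respect to $\tau$, using the product rule for the explicit $e^{-\tau/2}$ factor and the chain rule for the composition in the second argument:
\begin{equation*}
K_\tau \;=\; -\tfrac{1}{2}e^{-\tau/2}\kappa \;+\; e^{-\tau/2}\kappa_t\cdot e^{-\tau} \;=\; -\tfrac{K}{2}+e^{-3\tau/2}\kappa_t.
\end{equation*}

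Then I would plug in Equation \ref{MAIN} and re-express everything in terms of $K$ via $\kappa=e^{\tau/2}K$, $\kappa_{\theta\theta}=e^{\tau/2}K_{\theta\theta}$, which gives $\kappa^2(\kappa+\kappa_{\theta\theta})=e^{3\tau/2}(K^3+K^2K_{\theta\theta})$. The prefactor $e^{-3\tau/2}$ cancels exactly, leaving
\begin{equation*}
K_\tau \;=\; -\tfrac{K}{2}+K^2K_{\theta\theta}+K^3,
\end{equation*}
as claimed. The only thing to watch is bookkeeping of the two exponential factors, one from the time change and one from the spatial rescaling of $\kappa$; there is no serious obstacle, which is consistent with the fact that the lemma is presented as a routine step in the derivation package at the end of \S 5.
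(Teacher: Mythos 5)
Your proof is correct and follows essentially the same chain-rule and product-rule computation as the paper's own proof (which cites this as Equation 12 of Angenent's singularity paper): differentiate $K=e^{-\tau/2}\kappa$ in $\tau$, substitute $\kappa_t=\kappa^2\kappa_{\theta\theta}+\kappa^3$ from Equation \ref{MAIN}, and rescale back. One small note: the paper's displayed intermediate line actually has a spurious minus sign on the $e^{-\tau/2}e^{-\tau}\kappa_t$ term, corrected silently on the following line; your bookkeeping, with $+e^{-3\tau/2}\kappa_t$, is the right one.
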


\begin{proof}
  This is Equation 12 in \cite{ANG}.
   Using the chain rule, and setting $\kappa=\kappa(T-e^{-\tau})$ we
  have
    $$K_{\tau}=\frac{d}{d\tau} e^{-\tau/2} \kappa=$$
 $$ -(1/2)e^{-\tau/2} \kappa - e^{-\tau/2} e^{-\tau} \kappa_t=$$
 $$- (1/2)K + e^{-3\tau/2}(\kappa^2 \kappa_{\theta \theta} +
 \kappa^3)=$$
$$
   -\frac{K}{2} + K^2K_{\theta \theta} + K^3.
   $$
   This does it.
\end{proof}

\begin{lemma}[Equation \ref{NODAL}]
$$  \nu_\tau=K^2 \nu_{\theta\theta}+(K^2+1/2) \nu.
$$
\end{lemma}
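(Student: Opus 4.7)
The plan is a direct algebraic verification, combining the three ingredients already in hand: the identity $\nu = P/2 - K$ from Equation \ref{NU}, the support-function identity $P + P_{\theta\theta} = 1/K$ from Equation \ref{SUPP2}, and the evolution equations $P_\tau = P/2 - K$ and $K_\tau = -K/2 + K^2 K_{\theta\theta} + K^3$ from the preceding two lemmas. Nothing new needs to be introduced; the point is that these identities conspire so that the mixed term $K^2 P$ cancels and one is left with a strictly parabolic equation in $\nu$.

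First I would compute $\nu_\tau$ by differentiating $\nu = P/2 - K$ in $\tau$, substituting $P_\tau = P/2 - K$ and the expression for $K_\tau$. After cancellation of the $-K/2$ inside $K_\tau$ against the contribution from $P_\tau/2$, one gets an expression of the form
\begin{equation*}
\nu_\tau = \frac{P}{4} - K^2 K_{\theta\theta} - K^3 .
\end{equation*}
Next I would compute $\nu_{\theta\theta}$ by differentiating $\nu = P/2 - K$ twice in $\theta$ and using Equation \ref{SUPP2} to eliminate $P_{\theta\theta}$ in favor of $1/K - P$, obtaining
\begin{equation*}
\nu_{\theta\theta} = \frac{1}{2K} - \frac{P}{2} - K_{\theta\theta}.
\end{equation*}

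Then I would form the right-hand side $K^2 \nu_{\theta\theta} + (K^2 + 1/2)\nu$. The first term contributes $K/2 - K^2 P/2 - K^2 K_{\theta\theta}$, while the second, using $\nu = P/2 - K$, contributes $K^2 P/2 - K^3 + P/4 - K/2$. The $\pm K^2 P/2$ terms cancel, as do the $\pm K/2$ terms, leaving exactly $P/4 - K^2 K_{\theta\theta} - K^3$, which matches $\nu_\tau$.

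There is no real obstacle here; the lemma is a bookkeeping exercise once Equations \ref{NU} and \ref{SUPP2} and the two evolution identities are available. The only thing worth double-checking as I write it up is that the sign conventions for $\mathbf{n}$ (outward normal, clockwise parametrization) are used consistently, since those signs were fixed in the derivation of Equation \ref{SUPP2} and in the formula $p_t = -\kappa$ that underlies $P_\tau = P/2 - K$. Once those signs line up, the cancellation producing the strictly parabolic form for $\nu$ is automatic.
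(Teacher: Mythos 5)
Your computation is correct and uses the same ingredients and cancellations as the paper's proof; the only cosmetic difference is that the paper groups $\nu + \nu_{\theta\theta}$ (to exploit $P + P_{\theta\theta} = 1/K$ directly) and derives the result forward from $\nu_\tau$, whereas you compute $\nu_{\theta\theta}$ separately and verify that the right-hand side collapses to $\nu_\tau$. This is essentially the same argument.
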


\begin{proof}
Differentiating Equation \ref{NU}, we have
$$\nu_\tau=\frac{P_\tau}{2}-K_\tau=$$
$$(P/4-K/2) - (-K/2+K^2K_{\theta \theta} + K^3)=$$
\begin{equation}
-K^2K_{\theta\theta}-K^3+\frac{\nu+K}{2}
\end{equation}

Therefore
$$\nu_{\theta\theta}+\nu=\frac{P_{\theta\theta}+P}{2}-\big(K_{\theta\theta}+K\big)=\frac{1}{2K}-\big(K_{\theta\theta}+K\big)$$
Multiplying through by $K^2$ we get
$$-K^2K_{\theta \theta} - K^3 + \frac{K}{2} = K^2(\nu + \nu_{\theta \theta})$$
Thus:
$$\nu_\tau=-K^2K_{\theta\theta}-K^3+\frac{\nu+K}{2}=K^2(\nu_{\theta\theta}+\nu)+\frac{\nu}{2}=K^2\nu_{\theta\theta}+(K^2+1/2)
\nu.$$
This does it.
\end{proof}

\newpage

\section{The Bowtie Theorem}

The only detail missing in the proof of the
Bowtie Theorem is the Migration Lemma,
which we now prove.

Let $\Gamma(t)=C(t)/X(t)$.
The bounding box for $\Gamma(t)$ is
$$[-1,1] \times [-H(t),H(t)], \hskip 30 pt
H(t)=\frac{Y(t)}{X(t)}.$$

Let $x(P)$ and $y(P)$ respectively denote the $x$ and $y$
coordinates of a point $P$.    Let $\delta_t>0$ denote the value such
that
\begin{equation}
  \label{YY}
  y(\Gamma(\delta_t,t))=H(t)/2.
  \end{equation}

  \begin{lemma}
    \label{rise0}
    There is some
    $\delta>0$ such that $\delta_t>\delta$ once $t$ is
    sufficiently close to $T$.
  \end{lemma}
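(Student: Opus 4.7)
The plan is to convert the defining condition for $\delta_t$ into an integral equation in the tangent-angle parametrization, and then apply the Grim Reaper Theorem together with the $Y$-bound from Equation \ref{AS5}.

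First, I would rewrite the defining condition. Using the one-forms $dy = -\sin\theta\,ds$ and $\kappa\,ds = d\theta$, exactly as in the proof of Equation \ref{goal1}, the height of the point on $C_+(t)$ at tangent angle $\theta \in [0,\pi/2]$ equals $\int_0^\theta \sin\phi/\kappa(\phi,t)\,d\phi$. Dividing by $X(t)$ to pass to $\Gamma(t)$, and dividing numerator and denominator of the resulting ratio by $\kappa(\pi/2,t)$ to introduce $F$, the relation $y(\Gamma(\delta_t,t)) = H(t)/2$ becomes
\begin{equation*}
\int_0^{\delta_t} \frac{\sin\phi}{F(\phi,t)}\,d\phi \;=\; \frac{1}{2}\int_0^{\pi/2}\frac{\sin\phi}{F(\phi,t)}\,d\phi.
\end{equation*}

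Next, I would extract the asymptotics of both sides as $t \to T$. By Equation \ref{goal1} combined with the middle formula in Equation \ref{AS5}, the full integral $\int_0^{\pi/2}\sin\phi/F(\phi,t)\,d\phi = Y(t)\kappa(\pi/2,t)$ tends to $\pi/2$, so the right-hand side tends to $\pi/4$. For any fixed $\delta \in (0,\pi/2)$, the Grim Reaper Theorem supplies uniform convergence $F(\phi,t) \to \sin\phi$ on $[\delta,\pi/2]$, whence $\int_\delta^{\pi/2}\sin\phi/F(\phi,t)\,d\phi \to \pi/2-\delta$. Subtracting, $\int_0^{\delta}\sin\phi/F(\phi,t)\,d\phi \to \delta$.

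Finally, I would fix any $\delta \in (0,\pi/4)$ and set $\epsilon = (\pi/4 - \delta)/3 > 0$. For $t$ sufficiently close to $T$,
\begin{equation*}
\int_0^{\delta} \frac{\sin\phi}{F(\phi,t)}\,d\phi \;<\; \delta + \epsilon \;<\; \pi/4 - \epsilon \;<\; \frac{1}{2}\int_0^{\pi/2}\frac{\sin\phi}{F(\phi,t)}\,d\phi.
\end{equation*}
Since the integrand is positive, $\theta \mapsto \int_0^{\theta}\sin\phi/F(\phi,t)\,d\phi$ is strictly increasing, so the value $\delta_t$ satisfying the equality must exceed $\delta$. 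I do not anticipate a serious obstacle: the two ingredients (the Grim Reaper Theorem and the full $Y$-bound in Equation \ref{AS5}) are already available, and they are used in a direct way. In fact the same argument shows slightly more, namely $\liminf_{t\to T}\delta_t \geq \pi/4$.
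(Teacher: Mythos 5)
Your argument is correct and rests on exactly the same two ingredients as the paper's proof: the Grim Reaper Theorem and the $Y$-bound $\lim_{t\to T}Y(t)\kappa(\pi/2,t)=\pi/2$ from Equation \ref{AS5}. The paper packages the comparison geometrically, rescaling $C(t)$ by $\frac{\pi/2}{Y(t)}$ to get a curve $G^*(t)$ converging to the Grim Reaper curve $G$ on $[\delta,\pi/2]$ and reading off the $y$-ratio $2/3$ at a fixed $\theta=\delta$; you make the same computation explicit through the height integral, which is arguably more transparent. One small slip to flag: the $y$-coordinate of $C_+(t)$ at tangent angle $\theta\in[0,\pi/2]$ is $\int_\theta^{\pi/2}\sin\phi/\kappa(\phi,t)\,d\phi$, not $\int_0^\theta$; your formula actually computes the drop $Y(t)-y(\theta)$ from the top of the lobe. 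This is harmless here, since the defining half-height condition makes $\int_0^{\delta_t}=\tfrac12\int_0^{\pi/2}$ and $\int_{\delta_t}^{\pi/2}=\tfrac12\int_0^{\pi/2}$ equivalent, so the equation you derive and the resulting conclusion (including the sharper $\liminf_{t\to T}\delta_t\geq\pi/4$) are correct.
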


  \begin{proof}
    The Grim Reaper Curve $G=G(\theta)$ has
 maximum curvature $1$, and it occurs
 at $G(\pi/2)$, a point on the $x$-axis.
 The total height of $G$
is $\pi$.   Hence there is some
value $\delta>0$ such that
$y(G(\delta))= \pi/3.$
Define the rescaling
$$G^*(t)=C(t) \times \frac{\pi/2}{Y(t)}=\Gamma(t) \times \frac{\pi/2}{H(t)}.$$
By the middle formula in Equation \ref{AS5} and the
Grim Reaper Theorem together, $G^*(t)$
converges uniformly to $G$ (modulo horizontal translations) for $\theta \in
[\delta,\pi/2]$.  Hence
$$\lim_{t \to T} y(G^*(\delta,t)) = \pi/3, \hskip 30 pt
\lim_{t \to T} \frac{y(G^*(\delta,t))}{\pi/2} = 2/3.$$
The second equation is just a reformulation of the first.
Recaling, we have
$$\lim_{t \to T} \frac{y(\Gamma(\delta,t))}{H(t)}= 2/3.$$
But then $\delta_t>\delta$ for $t$ sufficiently close to $T$.
\end{proof}

\begin{lemma}
  \label{rise}
  $\lim_{t \to T} x(\Gamma(\delta_t,t))=1$.
\end{lemma}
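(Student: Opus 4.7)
The plan is to reuse the Grim Reaper rescaling $G^*(t) = \frac{\pi/2}{H(t)}\Gamma(t)$ introduced in Lemma~\ref{rise0} and track the point $\Gamma(\delta_t, t)$ through it. By the reflection symmetry of the figure-eight, $\Gamma(\pi/2, t) = (1, 0)$ for all $t$, so in $G^*(t)$ the rightmost point is pinned at $G^*(\pi/2, t) = (\pi/(2H(t)), 0)$. Combining the Grim Reaper Theorem with the middle formula of Equation~\ref{AS5}, the curvature $K(\theta, t)$ of $G^*(t)$ converges to $\sin\theta$ uniformly on $[\delta, \pi/2]$. Integrating $dx/d\theta = \cos\theta/K$ and $dy/d\theta = -\sin\theta/K$ from the pinned anchor $\theta = \pi/2$ upgrades this to uniform position convergence: for all $\theta \in [\delta, \pi/2]$,
$$x(G^*(\theta, t)) = \frac{\pi}{2H(t)} + \log\sin\theta + o(1), \qquad y(G^*(\theta, t)) = \frac{\pi}{2} - \theta + o(1),$$
with $o(1)$ uniform in $\theta$ as $t \to T$.

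Given this uniform estimate, I substitute $\theta = \delta_t$, which is legitimate because Lemma~\ref{rise0} guarantees $\delta_t \in [\delta, \pi/2]$ for $t$ sufficiently close to $T$. Since $\log\sin(\delta_t) \in [\log\sin\delta, 0]$ is uniformly bounded,
$$x(G^*(\delta_t, t)) = \frac{\pi}{2H(t)} + O(1).$$
Dividing through by the rescaling factor $\pi/(2H(t))$ to return to $\Gamma$ coordinates yields
$$x(\Gamma(\delta_t, t)) = 1 + O(H(t)),$$
and since $H(t) = Y(t)/X(t) \to 0$ by the Bounding Box Lemma~\ref{bb0}, the lemma follows.

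The main obstacle is the passage from the curvature convergence produced by the Grim Reaper Theorem to uniform \emph{position} convergence with the horizontal translation fully pinned down. The pinning is supplied by the symmetric point $\theta = \pi/2$, and uniformity follows because on the fixed compact interval $[\delta, \pi/2]$ both $K(\theta, t)$ and its limit $\sin\theta$ are bounded away from $0$ uniformly in $t$, making $1/K \to 1/\sin\theta$ uniformly, so the integrals defining position converge uniformly. This uniformity on a compact interval is precisely what makes substitution of the moving parameter $\delta_t$ valid.
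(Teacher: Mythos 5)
Your proof is correct, but it takes a genuinely different route from the paper's. The paper's argument is a short, purely geometric trapping estimate: fix $\delta<\delta_t$ from Lemma~\ref{rise0}; by convexity the arc of $\Gamma(t)$ joining $\Gamma(\delta,t)$ to $(1,0)$ lies inside the right triangle bounded by the $x$-axis, the tangent to $\Gamma(t)$ at $\Gamma(\delta,t)$ (which makes angle $\delta$ with the horizontal), and the vertical line through $\Gamma(\delta,t)$; since the height of that triangle is at most $H(t)$, its horizontal leg has length at most $H(t)/\tan\delta$, giving directly
\[
x(\Gamma(\delta_t,t)) > x(\Gamma(\delta,t)) > 1 - \frac{H(t)}{\tan\delta} \longrightarrow 1.
\]
No use of the Grim Reaper Theorem beyond what is already hidden in Lemma~\ref{rise0} is needed.

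Your approach instead integrates the Grim Reaper curvature limit. You correctly observe that the curvature of $G^*(t)=\tfrac{\pi/2}{Y(t)}C(t)$ at angle $\theta$ equals $F(\theta,t)\cdot\tfrac{\kappa(\pi/2,t)Y(t)}{\pi/2}$, so the Grim Reaper Theorem plus the middle formula of Equation~\ref{AS5} give $K^*(\theta,t)\to\sin\theta$ uniformly on $[\delta,\pi/2]$. Since $\sin\theta\geq\sin\delta>0$ there, $1/K^*\to 1/\sin\theta$ uniformly, and integrating $dx/d\theta=\cos\theta/K^*$ from the anchor $\theta=\pi/2$ (where $G^*(\pi/2,t)=(\pi/(2H(t)),0)$) yields the uniform asymptotic $x(G^*(\theta,t))=\tfrac{\pi}{2H(t)}+\log\sin\theta+o(1)$. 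Plugging in $\theta=\delta_t\in[\delta,\pi/2]$ and rescaling back gives $x(\Gamma(\delta_t,t))=1+O(H(t))$. This is all sound: the pinning at $\theta=\pi/2$ kills the translation ambiguity, and the uniformity on a fixed compact interval justifies the substitution of the $t$-dependent $\delta_t$. What your route buys is a stronger conclusion --- a quantitative rate and in fact a full position-level Grim Reaper asymptotic for $G^*$ on $[\delta,\pi/2]$ --- at the cost of invoking the Grim Reaper Theorem a second time and carrying out an integration that the paper avoids entirely via a one-line convexity bound. For the purpose of the lemma (the limit being $1$), the paper's argument is the lighter one.
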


\begin{proof}
  Let $\delta<\delta_t$ be as in the previous lemma.
By convexity, the arc of $\Gamma(t)$ connecting
  $\Gamma(\delta,t)$ to $\Gamma(\pi/2,t)=(1,0)$
  lies inside the solid right
  triangle bounded by
   the $x$-axis, the
   tangent line to $\Gamma(t)$ at $\Gamma(\delta,t)$,
   and the vertical line through $\Gamma(\delta,t)$.
   But the horizontal side of this triangle has length
   $H(t)/\tan(\delta)$.  Hence
   $$x(\Gamma(\delta_t,t))>x(\Gamma(\delta,t))>1-\frac{H(t)}{\tan(\delta)},$$
   a quantity which tends to $1$ as $t \to T$.
\end{proof}
  
If the Migration Lemma is false, there is some $\eta>0$ and a sequence
of times
$t_n \to T$ such that
\begin{equation}
  \label{MIG0}
  \lim_{n \to \infty} L_{t_n}(C(0,t_n))=(1-\eta,1).
\end{equation}
Let $\delta_n=\delta_{t_n}$ be as in Equation \ref{YY}.
  If we scale the $y$-coordinate by $1/H(t_n)$ (and do
  nothing to the $x$-coordinate) we map
  $\Gamma(t_n)$ to $L_{t_n}(C(t_n))$.
  Therefore, Lemma \ref{rise} and Equation \ref{YY} together give
  \begin{equation}
\label{MIG11}
\lim_{n \to \infty} L_{t_n}(C(\delta_n,t_n))=(1,1/2).
\end{equation}
Combining Equation \ref{MIG0}, Equation \ref{MIG11},
convexity, and symmetry, we see that the right lobe of
$L_{t_n}(C(t_n))$ bounds a convex polygon which converges in
the Hausdorff metric to
the polygon with vertices
$$(0,0), \hskip 10 pt
(1-\eta,1), \hskip 10 pt
(1,1/2), \hskip 10 pt
(1,-1/2), \hskip 10 pt
(1-\eta,-1).$$
But this polygon has area $1+(\eta/2)$.
This contradicts the fact that the area
of the region bounded by the right lobe of
$L_{t_n}(C(t_n))$ converges to $1$.

\newpage

\end{document}